%% file: Klein-Gordon_dynamics_above_the_threshold.tex
\documentclass[a4paper,12pt]{amsart}
\usepackage{import%,showkeys
}

\input{files/preamble.tex}
\input{files/macros.tex}

\input{files/environments.tex}

\newcommand{\floor}[1]{\lfloor #1 \rfloor}
\newcommand{\ceil}[1]{\left\lceil #1 \right\rceil}

\DeclareMathOperator{\eq}{eq}
\DeclareMathOperator{\linspan}{span}
\DeclareMathOperator{\dist}{dist}
\DeclareMathOperator{\rad}{rad}
\DeclareMathOperator{\inter}{interior}
\DeclareMathOperator{\tail}{tail}
\DeclareMathOperator{\var}{var}

\newcommand{\scatnorm}[1]{{\| #1 \|}_{L_t^3 L_x^6}}
\newcommand{\scnorm}[2]{{\| #1 \|}_{L_{t}^3(#2;L_x^6)}}

\newcommand{\embeds}{\hookrightarrow}
\newcommand{\LF}{\langle \grad_{x} \rangle^{-1} \grad_{x}}

\newcommand\restr[2]{{% we make the whole thing an ordinary symbol
  \left.\kern-\nulldelimiterspace % automatically resize the bar with \right
  #1 % the function
  \vphantom{\big|} % pretend it's a little taller at normal size
  \right|_{#2} % this is the delimiter
  }}

\title[Effective threshold dynamics]{Effective threshold dynamics for the cubic nonlinear Klein-Gordon equation in three dimensions}

\author{Jaime Gómez}
\address{}
\email{}
\author{Joachim Krieger}

\begin{document}
\begin{abstract}
    We consider the focussing cubic Klein-Gordon equation on $\mathbb{R}^{3+1}$ in the radial setting. Foundational work by Nakanishi-Schlag (\cite{NakanishiSchlagBook}) gives a complete 
    qualitative characterisation of the dynamics near the ground state $Q$, in particular revealing a threshold phenomenon, whereby a co-dimension one Lipschitz manifold separates finite time blow up from scattering solutions in forward time. Our aim in this paper is to derive an {\it{effective bound}} for the scattering norm $\|u\|_{L_t^3([0,\infty);L_x^6(\mathbb{R}^3))}$, for scattering energy class solutions $u$ with data sufficiently near $Q$ and close to the threshold manifold. This requires the development of an induction on energy strategy in a setting where an alternative to the usual Morawetz approach appears necessary. 
\end{abstract}

\maketitle

\vspace{0.7cm}

\section{Introduction}

Consider the cubic, focusing Klein-Gordon equation in $\R^{1+3}$, given by
\begin{equation}
    \label{eqn:NLKG}
    (\Box+1)u = u^3, \quad u[0] = (u_{0}, u_{1}),\quad \Box = \partial_t^2 - \triangle. 
\end{equation}
We shall restrict ourselves throughout to {\it{radial solutions}}, i. e. $u = u(t, x)$ only depending on $(t, |x|)$. The problem enjoys a conserved energy
\[ E(u, u_t) = \int_{\mathbb{R}^3}\frac12 |\nabla_{t,x}u|^2 + \frac12 u^2 - \frac14 u^4 \diff x ,\]
which defines the energy class $H^1 \times L^2$ of data with finite energy.

The well-posedness theory for this equation is concerned with the existence of strong solutions $u \in C([0,T);H^1) \times C^1([0,T);L^2)$ on a time interval $[0,T)$. Data belonging to the energy class lead to unique evolutions whose maximal forward time of existence $T$ can either be finite, in which case $\scnorm{u}{[0,T)} =\infty$ and the solution experiences a blowup, or infinite, meaning that $u$ exists for all times. In the latter case, if, moreover $\scatnorm{u} < \infty$, then $u$ scatters to a free wave in the sense that
\[ (u(t),u_t(t)) = (v(t),v_t(t)) + o_{H^1 \times L^2}(1) , \qquad t \to \infty .\]
The static equation
\[ - \triangle \varphi + \varphi = \varphi^3 \]
enjoys a unique radial, positive and stationary solution $Q$ \cite{Coffman, Gidas, McLeod, StraussSolitary}, which is furthermore $C^\infty$-smooth and decays exponentially as $|x|\rightarrow +\infty$. Crucially, $Q$ works as a critical threshold in the energy sense to establish a criterion by which finite-time blowup and existence for all times can be understood from the functionals
\[ K_0(u) = \int_{\R^3} \abs{\grad_x u}^2 + u^2 - u^4 \diff x , \quad K_2(u) = \int_{\R^3} \abs{\grad_x u}^2 - \frac{3}{4} u^4  \diff x \]
in the region $E(u,u_t) < J(Q)$, where $J$ is the static energy
\[ J(u) = \int_{\R^3} \frac{1}{2} \abs{\grad_x u}^2 + \frac{1}{2} u^2 - \frac{1}{4} u^4 \diff x ,\]
as proven by the works by Payne and Sattinger \cite{PayneSattinger}. Scattering to zero of solutions defined for all times was proven by Ibrahim, Masmoudi and Nakanishi \cite{IbrahimMasmoudiNakanishi} in the spirit of Kenig-Merle for the $H^1$-critical wave and Schrödinger equations \cite{KenigMerle}. In particular, they establish that solutions below the ground state energy share the sign of $K_0$ and $K_2$, which depends only on the data, and that blow-up in finite time occurs if $K_0(u_0) <0$, and scattering occurs when $K_0(u) > 0$, ensuring the existence of a function $N \colon(0,J(Q)) \longrightarrow (0,\infty)$ that satisfies
\begin{equation}\label{eqn:NfunctionQualitative}
    \scatnorm{u} < N(E(u,u_t))
\end{equation}
in the scattering regime. The cases where $K_0(u) = K_2(u) = 0$ are precluded by the variational characterization of $Q$, forcing that $u=Q$ in that case.

Moving slightly above the ground state energy, the characterization of the dynamics of solutions around $Q$ is of great interest, and a remarkable qualitative picture distinguishing $9$ different dynamical regimes was obtained in seminal work by Nakanishi-Schlag \cite{NakanishiSchlagBook}.

Limiting ourselves to the dynamics in {\it{forward time}} $t\geq 0$, the work by Nakanishi-Schlag in particular implies the existence of a co-dimension one Lipschitz manifold $\mathcal{M}$ near $(Q, 0)$ in the energy topology, and such that the following rough picture obtains:
\begin{itemize}
\item Energy class data $u[0]\in \mathcal{M}$ result in solutions scattering toward $Q$, i. e. the solution $\vec{u}(t,\cdot) = \big(u(t,\cdot), u_t(t,\cdot)\big)$ satisfies 
\[
\vec{u}(t,\cdot) = \big(Q + v,\,v_t\big)
\]
where $\big(v(t,\cdot), v_t(t,\cdot)\big)$ scatters toward a free Klein-Gordon solution as $t\rightarrow +\infty$, in the sense of $H^1 \times L^2$.  $\mathcal{M}$ is called the {\it{center stable manifold}} for the forward evolution. 
\item Energy class data $u[0]$ `below' $\mathcal{M}$, in a sense to be rendered precise below, result in solutions scattering toward $0$ as $t\rightarrow +\infty$. In particular, their {\it{scattering norm}} is bounded:
\begin{equation}\label{eq:scatteringbelowM}
\scnorm{u}{[0,\infty)} <+\infty.
\end{equation}
\item Energy class data $u[0]$ `above' $\mathcal{M}$ result in finite time blow up. 
\end{itemize}

It is intuitively clear that if we pick the data $u[0]$ in the second scenario closer and closer to the center stable manifold $\mathcal{M}$, the scattering norm will increase. Thus a natural question to ask is whether the qualitative statements \eqref{eqn:NfunctionQualitative} and \eqref{eq:scatteringbelowM} can be rendered quantitative in an effective manner. This is the question we pursue in the present paper. In order to formulate our main result, we need to introduce some notions, which also clarify the trichotomy presented before.

Following \cite{NakanishiSchlagBook}, let $L_+ = -\triangle +1 - 3Q^2$ the operator arising upon linearising the time independent cubic Klein-Gordon equation around the ground state. This operator admits a unique negative eigenvalue $-k^2,\,k>0$, corresponding to an exponentially decaying, positive eigenfunction $\rho$, which we may assume to be normalised $\|\rho\|_{L^2(\mathbb{R}^3)} = 1$. The presence of this eigenvalue leads to generically exponential growth in forward time for the wave propagator associated to $L_+$, prevention of which requires imposition of a co-dimension one condition on the initial data. This subspace of the data corresponds to the tangent space to $\mathcal{M}$ at $(Q, 0)$. As the evolution of data $u[0]$ near $(Q, 0)$ is at first dominated by the wave propagation associated to $L_+$ until reaching a `fully nonlinear regime', it is natural to expect the norm $\|u\|_{L_t^3([0,\infty) L_x^6(\mathbb{R}^3)}$ to grow logarithmically in terms of the distance of $u[0]$ to $\mathcal{M}$. 

\begin{numthm}\label{thm:main} There exists an effectively computable constant $1\gg \eta_1>0$ with the following property: let $\vec{u}[0]\in B_{\eta_1}(Q,0)\cap \mathcal{M}$, and consider the data 
\[
\vec{\phi}[0]: = \vec{u}[0]-(a\rho,0),\,0<a<\eta_1.
\]
Then the solution of \eqref{eqn:NLKG} with data $\vec{\phi}[0]$ exists globally in forward time and scatters toward $0$. Furthermore, there are effectively computable constants $d_*, C_*>0$ such that 
\[
\big\|\phi\big\|_{L_t^3([0,\infty)L_x^6(\mathbb{R}^3)}\in \big[\log^{\frac13}(\frac{1}{a})\frac{\|Q\|_{L_x^6}}{k^{\frac13}} - d_*,\quad \log^{\frac13}(\frac{1}{a})\frac{\|Q\|_{L_x^6}}{k^{\frac13}} + C_*\big],
\]
In particular, we have 
\[
\lim_{a\rightarrow 0_+}\log^{-\frac13}(\frac{1}{a})\big\|\phi\big\|_{L_t^3([0,\infty)L_x^6(\mathbb{R}^3)} = \frac{\|Q\|_{L_x^6}}{k^{\frac13}}
\]
\end{numthm}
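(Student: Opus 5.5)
The plan splits the forward evolution of $\phi$ into three regimes: a \emph{linear-hyperbolic regime} near $Q$, an \emph{exit regime} in which $\phi$ leaves a small neighbourhood of $(Q,0)$ on the scattering side, and a \emph{sub-threshold regime} in which an effective version of \eqref{eqn:NfunctionQualitative} applies.

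\emph{Linear-hyperbolic regime.} Write $\phi = Q + \lambda(t)\rho + \gamma(t)$ with $\gamma\perp\rho$. Let $u$ denote the solution with data $\vec u[0]\in\mathcal M$, and write $u = Q+\lambda_u\rho+\gamma_u$ in the same way. Using the Nakanishi--Schlag description of $\mathcal M$ as a Lipschitz graph over the stable directions, the difference $w=\phi-u$ satisfies
\[
\ddot\lambda_w - k^2\lambda_w = O\big(|\lambda_w|(|\lambda_u|+|\lambda_w|) + \|\gamma\|^2\big), \qquad \lambda_w(0)=-a,\quad \dot\lambda_w(0)=0 .
\]
Hence $\lambda_w(t)\approx -a\cosh(kt)$. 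The $\gamma$-component of $w$ is controlled by Strichartz estimates for $L_+$ restricted to $\rho^\perp$, which are available in the radial setting since $L_+$ has no threshold resonance. A bootstrap then shows that
\[
\|\phi-Q\|_{H^1\times L^2}\lesssim \eta_1 + a e^{kt}
\]
up to the exit time $T_a = k^{-1}\log(\delta/a)+O(1)$, at which $|\lambda_w|$ reaches a fixed small $\delta$. On $[0,T_a]$ the scattering norm satisfies
\[
\|\phi\|^3_{L^3_tL^6_x([0,T_a])} = T_a\|Q\|^3_{L^6} + O(1).
\]
The $O(1)$ comes from the cross terms. These are bounded because $\|\gamma\|_{L^3_tL^6_x}$ is $O(\eta_1)$ globally. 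The $\lambda$-contribution, $\int_0^{T_a} a e^{kt}\,dt = O(\delta)$, is also bounded. Note that only the cube $\|\phi\|^3$ has the $O(1)$ error, and taking cube roots turns the additive $O(1)$ into an $o(1)$ correction to $\log^{1/3}(1/a)\,\|Q\|_{L^6}k^{-1/3}$.

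\emph{Exit and sub-threshold regimes.} At time $T_a$ the sign of $\lambda_w$ is negative, i.e.\ toward the scattering side. The ejection lemma of Nakanishi--Schlag then shows that $\phi$ moves a fixed distance from $Q$ within time $O(1)$, with $K_0(\phi)\gtrsim\delta$ and $E(\phi)\le J(Q)+O(\eta_1^2)$. Their one-pass theorem, which rests on a Virial/Morawetz-type argument, shows that the solution never returns near $Q$, and hence scatters. The quantitative heart of the proof is a bound, uniform in $a$, on $\|\phi\|_{L^3_tL^6_x([T_a+O(1),\infty))}$. For this I would prove an effective induction-on-energy statement: data with energy $\le J(Q)+\epsilon$, with distance $\ge\delta$ from the orbit of $Q$, and lying in the region $K_0>0$ have scattering norm at most $C(\delta,\epsilon)$, with $C$ explicit. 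The argument partitions the remaining time into intervals on which the solution either is $\delta$-far from $Q$ or is close. Closeness is excluded after the exit by the one-pass property. On intervals where the solution is far, a localized virial identity gives decay of $\int_{|x|<R} u^4$, in place of the Morawetz estimate that fails above the threshold. This decay is then combined with a Tao-style bootstrap: bounded energy, radial Sobolev estimates and concentration ruled out by the virial bound together give finitely many pieces of bounded Strichartz norm.

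I expect this last step to be the main obstacle, namely making the above-threshold scattering bound effective without a compactness argument. The upper bound $C_*$ comes from adding the three regimes and taking cube roots. The lower bound $-d_*$ follows from the first regime alone.
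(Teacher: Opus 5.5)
Your first two regimes are essentially the paper's Lemma~\ref{lem:thresholdperturbativepart}, with your $\delta$ in the role of $\eta_2$, and Corollary~\ref{cor:lem:thresholdperturbativepart}. The gap is in the step you call the quantitative heart, and the mechanism you give for it does not work as stated. The localized virial identity gives neither decay nor global-in-time integrability of $\int_{|x|<R}\phi^4$. With a fixed-radius weight, $|V_R|\lesssim R$ and the exterior and cutoff errors are $O(R^{-2})$ per unit time by radial Sobolev, so coercivity only yields
\[
\int_{t_1}^{t_1+T}\int_{|x|<R}\phi^4\,dx\,dt\lesssim R+TR^{-2},
\]
which grows linearly in $T$. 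This is exactly the missing Morawetz estimate. It shows that every long window contains times with little energy near the origin, but it cannot bound the number of concentration intervals on $[0,\infty)$, so ``finitely many pieces of bounded Strichartz norm'' does not follow.

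An extra idea is needed to turn the virial into a global bound. In the paper this is the energy channel of Lemma~\ref{lemma:EnergyChannel}: the virial with the expanding cutoff $\chi(x/(R+t))$ forces $\|u[T]\|^2_{H^1\times L^2(|x|\ge\delta_3T)}\ge\delta_2/16$. Pigeonholing over the radii $R_j$ then finds an annulus where the data have norm $<\delta_5$. The solution splits into an inner and an outer piece, each of energy $<J(Q)-\tilde\delta_*$. These are glued back by Proposition~\ref{prop:Decoupling}, which needs the vector-field decay $|u_{1,\ell}|\lesssim t^{-3/2}$ of the compactly supported inner piece, and controlled by iterating Proposition~\ref{prop:ScatteringBound} down to small data. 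That is where induction on energy actually happens; in your sketch nothing lowers the energy. A Tao or Dodson--Murphy scattering criterion is a conceivable alternative. But then you must kill the distant-past Duhamel term by dispersive decay and control the free evolution of the data uniformly in $a$, e.g.\ by cutting time into $O_{E,\eta}(1)$ intervals where it is small; none of this is in the proposal.

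The statement you propose to prove is also false as written. Data with $E\le J(Q)+\epsilon$, $d_Q\ge\delta$ and $K_0>0$ can be trapped by $Q$ in forward time, e.g.\ the backward-scattering, forward-trapped solutions of Nakanishi--Schlag. The hypothesis must be $d_Q(\phi(t))\ge\delta$ for all $t\ge0$, which is what the one-pass theorem supplies.

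Two further ingredients are missing.
\begin{enumerate}
\item Any localized virial needs coercivity of $K_2(\chi_R\phi(t))$. Above $J(Q)$, the condition $d_Q(\phi(t))\ge\delta$ does not pass to $\chi_R\phi(t)$, since a cutoff can move $\phi$ closer to $Q$. The paper's Lemma~\ref{lem:phitrunation} gets it from the scale separation $\eta_1\ll\eta_2$: if $\chi_R\phi$ were near $Q$, the exterior would carry energy $\gtrsim\tilde\delta(\eta_2)^2$, more than the available excess $O(\eta_1^2)$.
\item The coercivity $K_j\ge\min\{\kappa_1(\delta),\kappa_0\|\cdot\|^2\}$ after ejection, and hence the one-pass theorem you invoke, rest on Nakanishi--Schlag's Lemma~4.8, whose constants come from compactness. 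Since the theorem asserts effectively computable $\eta_1$ and $C_*$, these constants must be made explicit. The paper does this in its last section: explicit local stability of $J$ on $\{K_j=0\}$ near $Q$, a terminating certified finite-dimensional search for $\beta$, and transfer to $\kappa_{\mathrm{var}}(d)$.
\end{enumerate}

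A minor point on the first regime. The cross term in $\|\phi\|_{L^6}^3-\|Q\|_{L^6}^3$ is linear in $\phi-Q$, so an $O(1)$ bound on it would need $\int_0^{T_a}\|\gamma\|_{L^6}\,dt$, which $L^3_tL^6_x$ control does not give. Use Minkowski instead: $\big|\|\phi\|_{L^3_tL^6_x([0,T_a])}-T_a^{1/3}\|Q\|_{L^6}\big|\le\|\phi-Q\|_{L^3_tL^6_x([0,T_a])}=O(\eta_1+\delta)$. This is an $O(1)$ rather than $o(1)$ correction, which is all the theorem needs.
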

\begin{remark} The paper \cite{DuyckaertsMerle} reveals a similar limiting relation for solutions of the energy critical focussing nonlinear wave equation of energy level below that of the ground state and approaching the ground state. Our solutions are allowed to have energy above the ground state, and more importantly, our bounds are {\it{effective}}. 
\end{remark}
\begin{remark} The constant $d_*=d_*(\eta_1)$ and we have $\lim_{\eta_1\rightarrow 0}d_*(\eta_1) = 0$.The  constant $C_*$ resulting from our method appears extremely large, but is effectively computable. 
\end{remark}

%{\color{red} I am assuming the introduction with the definitions of the static energy $J$, the variational characterizations, the history of the problem and so on... that I will write once the rest is complete to have a good picture. And
%\[ {A}(s) = \sup_{E(u) \leq s} \scatnorm{u} .\]
%} 

The strategy to prove the theorem consists of two stages: in the following section, we first prove an energy induction step in the `Kenig-Merle dichotomy regime', i. e. for scattering solutions of energy $E(u,u_t)<J(Q)$. We can benefit here from the fact that we get coercivity for a certain functional appearing in a crucial virial identity purely on account of the `sub-criticality' of the energy level. This will allow us to show that scattering solutions are either in a perturbative regime to begin with, or have a tendency to decouple into weakly interacting pieces of smaller energy. 
\\
In section~\ref{sec:aboveQ}, we combine a perturbative argument describing the solution up to the `ejection stage' with the no-return result from \cite{NakanishiSchlagBook} to show that under the hypotheses of the theorem, the solution is either of perturbative type, or else decomposes into two weakly interacting solutions in the sub-critical regime. 

\section{Solutions below the ground state energy}\label{sec:belowQ}

Throughout this section, we shall consider (radial) finite energy solutions of \eqref{eqn:NLKG} which are scattering. Equivalently, we require that $K_0(u)\geq 0$. For $s<J(Q)$, we then introduce the quantity 
\[
A(s) = \sup_{E(u)\leq s,\,K_0(u)\geq 0}\scatnorm{u},
\]
where the norm is taken globally in time. As seen in \cite{PayneSattinger, NakanishiSchlagBook, IbrahimMasmoudiNakanishi} this expression is finite. The following proposition will give us a tool to effectively bound this quantity. 
\begin{numprop}
    \label{prop:ScatteringBound}
    Let $u$ be a radial energy class solution to \eqref{eqn:NLKG} of scattering type, satisfying  
    \[ E(u) = \int_{\mathbb{R}^3}\frac12 |\nabla_{t,x}u|^2 + \frac12 u^2 - \frac14 u^4 \diff x < J(Q) - \delta \]
    for some $\delta>0$. Then there is an effectively computable $\delta' = \delta'(\delta)>0$ such that  we have 
    \begin{align*}
        \scatnorm{u} \leq K\Big( A\big(J(Q) - \delta -\delta' \big)\Big),
    \end{align*}
    for an effectively computable growing function $K: \mathbb{R}_+\rightarrow \mathbb{R}_+$. 
    In particular
    \begin{align*}
        A\big(J(Q) - \delta \big)\leq K\Big( A\big(J(Q) - \delta -\delta' \big)\Big).
    \end{align*}
    The function $K$ can be chosen of the form $K(r) = C^{C^{C^{cr^3}}}$ for suitable positive constants $C,c$. 
\end{numprop}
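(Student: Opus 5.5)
We prove Proposition~\ref{prop:ScatteringBound} by an effective induction on energy. The mechanism is the one sketched in the introduction: a solution of energy below $J(Q)-\delta$ is either perturbative from the outset, or it splits into two weakly interacting pieces, each with energy at most $J(Q)-\delta-\delta'$.

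Fix a large effective constant $M$ and partition $[0,\infty)$ into consecutive intervals $I_j$ on which $\scnorm{u}{I_j}=\varepsilon_0$, where $\varepsilon_0$ is the small-data threshold of the Strichartz perturbation theory. If there are at most $M$ such intervals, then $\scatnorm{u}\le M^{1/3}\varepsilon_0$ and we are done. Otherwise we show that on some interval the solution either concentrates or decouples.

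\textbf{Step 1: a coercive virial estimate.} Coercivity is available purely because of subcriticality. By the Payne--Sattinger/Ibrahim--Masmoudi--Nakanishi variational theory, $E(u)<J(Q)-\delta$ together with $K_0(u)\ge 0$ gives $K_2(u(t))\ge c(\delta)\|u(t)\|_{H^1}^2$, with $c(\delta)$ computable from the sharp Gagliardo--Nirenberg constant attained by $Q$; the same holds for $K_0$. Insert this into the truncated virial identity
\[
\frac{d}{dt}\int \chi_R\, u_t\,(x\cdot\nabla u+u)\,dx=-K_2(u)+O\Big(\int_{|x|>R}|\nabla_{t,x}u|^2+u^2+u^4\,dx\Big).
\]
Integrating over a window $[T_1,T_2]$ gives
\[
\int_{T_1}^{T_2}K_2\,dt\lesssim R\,E+\text{(exterior terms)}.
\]
The radial Strauss inequality shows that $u$ is small in $L^\infty$ for $|x|>R$, so the exterior terms are controlled by the energy flux. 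Since the $L^3_tL^6_x$ norm is large on many intervals, the energy cannot remain concentrated inside $|x|<R$ for a long time. The conclusion of this step is that, within effectively bounded time, a definite amount $\gtrsim \delta'$ of energy has moved into the region $|x|\gg R$. Here $\delta'$ is a computable function of $\delta$.

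\textbf{Step 2: decoupling.} At such a time $t_0$, split the data $\vec u(t_0)=\chi_{<R'}\vec u(t_0)+\chi_{>R'}\vec u(t_0)$ with a smooth cutoff. Choose the cutoff radius $R'$ in a large annulus where the energy density is small; pigeonholing over dyadic annuli costs a factor of order $E/\delta'$. Each piece then has energy at most $J(Q)-\delta-\delta'$ up to a small error, and the inner piece still satisfies $K_0\ge0$ by continuity of the variational sign. By induction, both pieces have global scattering norm at most $A(J(Q)-\delta-\delta')$.

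Let $v_1,v_2$ be the evolutions of the two pieces. Finite speed of propagation together with exponential Klein--Gordon/radial decay makes the cross terms $v_1^2v_2$ and $v_1v_2^2$ small in $L^1_tL^2_x$. The long-time perturbation lemma then gives
\[
\scnorm{u}{[t_0,\infty)}\le C\,e^{C A^3}
\]
with $A=A(J(Q)-\delta-\delta')$. Adding the bounded contribution from $[0,t_0]$ yields the estimate for $\scatnorm{u}$.

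The towered exponential form of $K$ comes from chaining three effective losses. The perturbation lemma contributes a factor exponential in $A^3$. The required smallness of the interaction must be beaten by the radius separation, which forces $R'$ to be exponentially large in that quantity. Finally, the time needed for the virial step to produce this separation adds one more exponential.

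\textbf{Main obstacle.} The hard part is Step~1. We need a quantitative statement that large scattering norm forces some energy to escape to large radius at a controlled time, and the usual Morawetz approach is not effective in this setting. I would first try the localized virial identity above with $R$ chosen adaptively. I would then combine the lower bound $\int_{I_j}\int u^4\,dx\,dt\gtrsim_{\varepsilon_0}1$ on each interval (via interpolation with energy) with $K_2\gtrsim\|u\|_{L^4}^4$, so that the integrated $K_2$ grows linearly in the number of intervals. Set against the bound $RE$, this growth shows that an effective $R$-exterior energy flux must occur.
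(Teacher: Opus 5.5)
Your overall architecture matches the paper's: energy escapes via a virial argument, the data are split at one time into an inner and an outer piece of lower energy, and long-time perturbation closes the argument. However, Step~1, which you correctly flag as the crux, is not established, and the mechanism you propose for it does not work.

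\textbf{Step 1.} With a fixed cutoff $\chi_R$ the virial identity only gives
\[
\int_{T_1}^{T_2} K_2(u)\,dt \le CRE + C\int_{T_1}^{T_2} E_{\mathrm{ext},R}(t)\,dt ,
\]
where $E_{\mathrm{ext},R}(t)$ is the energy in $|x|>R$. Your gain $\int_{I_j}K_2\,dt\gtrsim c(\delta)\varepsilon_0^3$ is counted per interval of fixed scattering norm, and the lengths $|I_j|$ are not controlled. So $N\gg RE$ only forces $\int E_{\mathrm{ext},R}\,dt\gtrsim N$. That is compatible with arbitrarily little exterior energy at every single time, so you never obtain a definite amount of energy at a definite time.

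Moreover, Step~2 needs two things at the \emph{same} time:
\begin{enumerate}
\item a definite amount of energy near the origin, since otherwise the outer piece carries almost all of $E(u)$ and the induction gains nothing;
\item a definite amount of energy beyond the separation radius.
\end{enumerate}
You only address the second.

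The paper supplies exactly these missing pieces.
\begin{enumerate}
\item From $\norm{u[0]}_{H^1(B_{R_*})}\ge\delta_*$ it proves a \emph{per-unit-time} bound $\int_I\norm{\grad\tilde u}_{L^2}^2\,dt\ge\delta_1$ for every $|I|=1$ (Lemma~\ref{lemma:LowerBoundGradient}). The proof is by contradiction, using the low-frequency structure of $\tilde u[T_*]$, the Guo--Wang improved radial Strichartz estimate and perturbation theory.
\item It uses the expanding cutoff $\chi(x/(R+t))$ together with the cone monotonicity of Lemma~\ref{lemma:FiniteSpeedOfPropagation}. This makes the virial error at most $\delta_2/2$ per unit time once the initial exterior energy is small.
\item Consequently $\int_0^T K_2(\tilde u)\,dt\ge\delta_2T$ beats $\abs{V(T)}\lesssim R+T$. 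This yields energy $\ge\delta_2/16$ in $|x|\ge\delta_3T$ (Lemma~\ref{lemma:EnergyChannel}), which lies beyond any prescribed radius for late $T$.
\item The simultaneous inner concentration comes from a dichotomy using Lemma~\ref{lemma:Bootstrap}. Either $\norm{u(t)}_{H^1(B_{R_*})}<\delta_*$ for all $t\ge T_{**}$, and the Strauss-based bootstrap bounds the rest directly, or some $T_{***}\ge T_{**}$ has both concentrations.
\end{enumerate}

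There are also smaller errors in Step~1.
\begin{enumerate}
\item $K_2\ge c(\delta)\norm{u}_{H^1}^2$ is false, because $K_2$ does not see the $L^2$ mass. Only $K_2\ge c(\delta)\norm{\grad u}_{L^2}^2$ holds.
\item A lower bound on $\norm{u}_{L^6}$ plus energy does not give one on $\norm{u}_{L^4}$. Use $K_2\gtrsim_\delta\norm{\grad u}_{L^2}^2\gtrsim\norm{u}_{L^6}^2$ instead.
\item The multiplier must be $x\cdot\grad u+\frac32u$ to produce exactly $-K_2$.
\end{enumerate}

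\textbf{Decoupling.} This step also has a gap. Klein--Gordon solutions do not decay exponentially. Finite speed of propagation keeps the two evolutions disjoint only until time $\sim(R_2-R_1)/2$. After that you need an \emph{effective} decay rate for the inner evolution, and the qualitative fact $\scnorm{v_1}{[T,\infty)}\to0$ carries none.

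The paper obtains this rate in three steps:
\begin{enumerate}
\item it mollifies the compactly supported inner data;
\item it proves $|v(t,x)|\le K(1+R+t)^{-3/2}$ with effective $K$, via H\"ormander's vector-field method and effective control of $Z^Iv$ (Lemmas~\ref{lemma:VectorFieldsControl}, \ref{lemma:CombinatorialBound}, \ref{lemma:PointwiseEstimateAllTime});
\item it treats the high-frequency remainder perturbatively (Lemma~\ref{lemma:L1L2Estimate}).
\end{enumerate}

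The separation $R_2(S_1,S_2,R_1)$ this requires grows with the inner radius. So the pigeonholing must run over the iterated radii $R_j=R_2(S_1,S_2,R_{j-1})$, $j\le\delta_5^{-1}$, not over dyadic annuli. This iteration, rather than the virial time (which is only linear in the final radius), is what produces the triple exponential in $K$.
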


The strategy behind Proposition~\ref{prop:ScatteringBound} proceeds in a number of steps. We first make a distinction between the following two cases. If $u$ never gathers a significant amount of $H_x^1$--norm near the origin, then we may bootstrap the smallness of its Strichartz norms. If this is not the case, then we argue that either there is an explicit channel through which $u$ disperses its energy far away from the origin, or it must gather sufficient energy far away from the origin to begin with. In turn, this provides a decoupling of the solution into two separate scattering pieces, allowing us to estimate $\scatnorm{u}$ in terms of the scattering norm of lower-energy solutions.

\section{Bootstrap argument under the small energy assumption}

We begin with a bootstrap argument that provides an explicit estimate on $\scatnorm{u}$ whenever $u$ gather its $H_{x}^1$--concentration far away from the origin at all times.

\begin{numremark}
    From this point on, we shall always refer to balls centered at the origin, and we will simply write $B_{R}$ instead of $B_{R}(0)$. Additionally, at every step in every proof where an absolute, computable constant $C>0$ appears, we shall let it absorb other absolute, positive computable constants. For instance, we may substitute $2C$ or $C^2$ by $C$ from one line to the next.  
\end{numremark}

The first part of the argument will require localized $L_{x}^p$ estimates on radial, finite energy solutions, both inside and outside balls centered at the origin. 

\begin{numlemma}
    \label{lemma:InterpolationBounds}
    Let $u$ be a radial solution to \eqref{eqn:NLKG} that exists globally in forward time and satisfies $E(u,u_{t}) < J(Q)$, and let $R>1$. Then it holds that
    \begin{equation}
        \label{eqn:InterpolationStrauss}
        \norm{u}_{L_{x}^p(|x| \geq R)} \leq C E(u)^{\frac{1}{2}} R^{-\frac{p-2}{p}} ,
    \end{equation}
    and
    \begin{equation}
        \label{eqn:SobolevBall}
        \norm{u}_{L^6(B_{R})} \leq C \left[ R^{-2} \norm{u}_{L^2(B_{R})}^2 + \norm{\grad u}_{L^2(B_{R})}^2 \right]^{1/2} .
    \end{equation}
\end{numlemma}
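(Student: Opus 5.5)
The first estimate \eqref{eqn:InterpolationStrauss} will follow from the radial Strauss inequality combined with interpolation. The quantity $E(u)^{1/2}$ on the right is meaningful only once $E$ controls the $H^1$ norm, so the first step is to establish
\[
\norm{u(t)}_{H^1}^2 \le C E(u,u_t),
\]
which for scattering (i.e.\ $K_0\ge 0$) solutions below $J(Q)$ is standard. Indeed, for $K_0(u)\ge 0$ we have $\int u^4\le \int |\nabla u|^2+u^2$, so
\[
E \ge \tfrac14\big(\norm{\nabla u}_2^2+\norm{u}_2^2\big)+\tfrac12\norm{u_t}_2^2 .
\]
This is the variational picture of \cite{PayneSattinger, IbrahimMasmoudiNakanishi}, which I will take as given. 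The lemma is stated under the global existence hypothesis, and in the context of this section $K_0\ge 0$ is assumed.

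With this bound in hand, I will apply Strauss' lemma to the radial function: for $|x|\ge R\ge 1$,
\[
|u(x)|\le C|x|^{-1}\norm{u}_{L^2}^{1/2}\norm{\nabla u}_{L^2}^{1/2}\le C|x|^{-1}E^{1/2}.
\]
Interpolating between $L^2$ and $L^\infty$ on the exterior region then gives
\[
\norm{u}_{L^p(|x|\ge R)}\le \norm{u}_{L^2}^{2/p}\norm{u}_{L^\infty(|x|\ge R)}^{1-2/p}\le CE^{1/2}R^{-\frac{p-2}{p}},
\]
which is exactly the claimed exponent, valid for $2\le p\le\infty$.

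For \eqref{eqn:SobolevBall} I will use a scaling argument. First, take the Sobolev inequality on the unit ball,
\[
\norm{f}_{L^6(B_1)}\le C\big(\norm{f}_{L^2(B_1)}+\norm{\nabla f}_{L^2(B_1)}\big),
\]
which is obtained via a bounded extension operator $H^1(B_1)\to H^1(\mathbb R^3)$ followed by the embedding $\dot H^1\subset L^6$. Then rescale by setting $f(y)=u(Ry)$. Since $\norm{f}_{L^6(B_1)}=R^{-1/2}\norm{u}_{L^6(B_R)}$, $\norm{f}_{L^2(B_1)}=R^{-3/2}\norm{u}_{L^2(B_R)}$ and $\norm{\nabla f}_{L^2(B_1)}=R^{-1/2}\norm{\nabla u}_{L^2(B_R)}$, multiplying through by $R^{1/2}$ yields
\[
\norm{u}_{L^6(B_R)}\le C\big(R^{-1}\norm{u}_{L^2(B_R)}+\norm{\nabla u}_{L^2(B_R)}\big),
\]
which is equivalent to the stated square-root form.

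Neither step is a real obstacle. The only point needing care is the first one, namely justifying $\norm{u}_{H^1}^2\lesssim E$ uniformly, which relies on $K_0\ge 0$ being preserved. This I will quote from the earlier cited works.
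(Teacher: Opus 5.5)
Your proposal is correct and follows essentially the same route as the paper. You both use the bound $\norm{u}_{H^1}^2 \le 4E(u)$ coming from $K_0\ge 0$ (via Payne--Sattinger), Strauss's radial lemma with $L^2$--$L^\infty$ interpolation on $\{|x|\ge R\}$, and the Sobolev inequality on $B_1$ transported to $B_R$ by the rescaling $u(Ry)$; the only cosmetic differences are that you use the product form $\norm{u}_{L^2}^{1/2}\norm{\grad u}_{L^2}^{1/2}$ of Strauss's lemma and justify the unit-ball Sobolev inequality by an extension operator instead of citing Evans.
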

\begin{proof}
    The first part follows by an interpolation argument after an application of the Strauss radial lemma \cite[Radial Lemma~1]{StraussRadial}. In fact, under the ground state energy, the works of Payne and Sattinger \cite{PayneSattinger} establish the coercivity $K_{0},K_{2} >0$, and assuming that $E(u,u_{t}) < J(Q)-\delta$, then we find
    \[ J(Q)-\delta > E(u) > \int_{\mathbb{R}^n} \frac{1}{2}\lvert u_{t} \rvert^2+ \frac{1}{4}(\lvert u \rvert^2+\lvert \nabla u \rvert^2) \, \mathrm{d}x \geq\frac{1}{4} \lVert u \rVert_{H^1}^2 ,\]
    from where a uniform $L_{x}^2$ estimate is deduced. On the other hand, the Strauss radial lemma provides
    \[ \lvert u(t,x) \rvert \leq C_{0} \lvert x \rvert^{-1} \lVert u(t,\cdot) \rVert_{H^1} \leq C_{1} \lvert x \rvert^{-1}E(u)^{\frac12} \]
    uniformly in $t$ and for $|x| > 1$. By interpolating both estimates, we conclude that
    \begin{equation*}
        \norm{u}_{L_{x}^p(|x| \geq R)} \leq \norm{u}_{L_{x}^2 (|x| \geq R)}^{\frac{2}{p}} \norm{u}_{L_{x}^{\infty}(|x| \geq R)}^{\frac{p-2}{p}} \leq C_{0} E(u)^{\frac{1}{2}} R^{-\frac{p-2}{p}} ,
    \end{equation*}
    for all $p>2$ and $R>1$.

    Equation~\ref{eqn:SobolevBall} reflects the Sobolev inequality on the ball $B_{R}$ present in \cite[{Theorem~6}, \S~5.6.3]{Evans} after a scaling argument is used to explicitly estimate the Sobolev constant. Indeed, we have
    \[ \norm{u}_{L^6(B_{R})} \leq C_{R} \norm{u}_{H^1(B_{R})} ,\]
    for a constant $C_{R} > 0$ depending only on $R$. By defining
    \[ v(y) = u(Ry) , \quad y \in B_{1} ,\]
    one may prompty check that
    \[ \norm{v}_{L^6(B_{1})} = R^{-1/2} \norm{u}_{L^6(B_{R})} , \quad \norm{v}_{H^1(B_{1})}^2 = R^{-3} \norm{u}_{L^2(B_{R})}^2 + R^{-1} \norm{\grad u}_{L^2(B_{R})}^2 .\]
    Applying \cite[{Theorem~6}, \S~5.6.3]{Evans} to $v$ on $B_{1}$ we conclude \eqref{eqn:SobolevBall}, where $C=C_{1}$.  
\end{proof}

By exploiting the estimates in Lemma~\ref{lemma:InterpolationBounds} together with the Strichartz estimates in \cite{NakanishiSchlagBook}, we may determine explicitly how much $u$ is allowed concentrate --and exactly how far away from the origin-- in the $H_x^1$--norm in order for a bootstrap scheme to yield global scattering estimates.

\begin{numlemma}
    \label{lemma:Bootstrap}
    Let $u$ be a globally defined, radial solution to \eqref{eqn:NLKG} satisfying $E(u, \partial_{t} u) < J(Q)$. 
    There are explicitly computable $\delta_*>0, R_*>0$ such that if
    \begin{equation}
        \label{eqn:H1Bootstrap}
        \|u \|_{L_t^\infty H_{x}^1 (B_{R_*})} < \delta_*,
    \end{equation}
    then it holds that
    \[ \|u\|_{L_{t}^3 L_{x}^6} \leq 2 \norm{u[0]}_{H^1 \times L^2} .\]
\end{numlemma}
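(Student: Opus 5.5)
A continuity (bootstrap) argument on the forward time interval, using the Strichartz estimate for the Klein--Gordon propagator from \cite{NakanishiSchlagBook}. Namely, for a solution of $(\Box+1)u=F$,
\[
\|u\|_{L_t^3([0,T);L_x^6)} \le C_S\big(\|u[0]\|_{H^1\times L^2} + \|F\|_{L_t^1([0,T);L_x^2)}\big).
\]
Put $X(T)=\|u\|_{L_t^3([0,T);L_x^6)}$. This is finite and continuous in $T$ for each finite $T$, by local well-posedness, and $X(0^+)=0$. It therefore suffices to prove that $X(T)\le 2\|u[0]\|_{H^1\times L^2}$ implies the strictly better bound $X(T)\le \tfrac32\|u[0]\|_{H^1\times L^2}$, say. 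Since $u$ is global, letting $T\to\infty$ then gives the claim.

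To estimate the nonlinearity, split $\|u^3\|_{L^2_x}$ into the parts on $B_{R_*}$ and on $|x|\ge R_*$.

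On $B_{R_*}$, Hölder gives $\|u^3\|_{L^2(B_{R_*})}\le \|u\|_{L^6(B_{R_*})}^2\|u\|_{L^6}$. Apply \eqref{eqn:SobolevBall} with $R=R_*>1$; since $R_*^{-2}\le 1$, this gives $\|u\|_{L^6(B_{R_*})}\le C\|u\|_{H^1(B_{R_*})}<C\delta_*$ at every time.

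Outside the ball, write $\|u^3\|_{L^2(|x|\ge R_*)}\le \|u\|_{L^\infty(|x|\ge R_*)}^{2}\|u\|_{L^6}$ (or equivalently use $L^{12}$ norms). The Strauss bound from the proof of Lemma~\ref{lemma:InterpolationBounds}, or \eqref{eqn:InterpolationStrauss} with large $p$, controls the first factor by $CE(u)R_*^{-2}\le C J(Q)R_*^{-2}$. The coercivity $\|u\|_{H^1}^2\lesssim E(u)<J(Q)$ holds below the threshold; in the scattering regime $K_0\ge0$, which is the setting of this section, this is what I would invoke.

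Combining the two pieces pointwise in time,
\[
\|u^3\|_{L^2_x}\le C\big(\delta_*^2+J(Q)R_*^{-2}\big)\|u\|_{L^6_x}.
\]

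Integrating in time gives only $L^1_tL^2_x\lesssim \|u\|_{L^1_tL^6_x}$. This is the main obstacle: the bootstrap norm is $L^3_t$, not $L^1_t$, and the time interval is infinite. So the crude pointwise factorization is not enough, and we must instead place two factors in a norm that is $L^3$ in time. I would first try the following. Bound $\|u^3\|_{L^1_tL^2_x}\le \|u\|_{L^3_tL^6_x}\,\|u\|_{L^3_tL^6_x}\,\|u\|_{L^3_tL^6_x}$ directly, which is the standard cubic estimate. Then extract smallness by splitting each factor as $u=u\mathbf 1_{B_{R_*}}+u\mathbf 1_{|x|\ge R_*}$ and interpolating the $L^3_tL^6_x$ norm of the localized pieces between $L^\infty_t L^6_x$ (small, by the two spatial bounds above) and a slightly better Strichartz norm such as $L^2_tL^\infty_x$ or $L^{2+}_tL^{6+}_x$, which is available for radial Klein--Gordon in 3D. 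Concretely, aim for
\[
\|u^3\|_{L^1_tL^2_x}\le C\,\varepsilon(\delta_*,R_*)^{\theta}\,\big(\|u[0]\|_{H^1\times L^2}+X(T)\big)^{3-\theta'}X(T)^{\theta'},
\]
where $\varepsilon(\delta_*,R_*)=\delta_*+J(Q)^{1/2}R_*^{-1}$ and all constants are explicit. Under the bootstrap hypothesis this yields $X(T)\le C_S\|u[0]\|_{H^1\times L^2}\,(1+C\varepsilon^\theta)$.

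Finally, choose $\delta_*$ small and $R_*$ large explicitly so that the last factor is at most $\tfrac32$; this closes the bootstrap, up to the normalisation of the Strichartz constant implicit in the stated factor $2$. The constants $\delta_*,R_*$ depend only on $C_S$, the Sobolev constant from Lemma~\ref{lemma:InterpolationBounds}, and $J(Q)$, so they are effectively computable.
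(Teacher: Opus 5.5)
You have the right setup: a continuity argument, Strichartz with $L^1_tL^2_x$ forcing, and smallness of $u$ in $L^\infty_tL^p_x$ from \eqref{eqn:SobolevBall} on $B_{R_*}$ together with the Strauss tail bound outside. You also correctly see that the pointwise-in-time bound only gives $L^1_tL^6_x$.

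\textbf{The gap.} The step meant to overcome this is the only nontrivial step of the lemma, and it does not work as written. With the small factor in $L^\infty_t$, H\"older in time forces the other two factors into $L^2_t$, not $L^3_t$ as you state. The interpolation you propose also cannot return the norm you need. Interpolating between $L^\infty_tL^6_x$ and $L^q_tL^r_x$ lands on $L^3_tL^6_x$ only if $r=6$. With $L^2_tL^\infty_x$ you get $L^3_tL^{18}_x$, and with $L^{2+}_tL^{6+}_x$ you get a spatial exponent strictly above $6$. So the displayed target inequality with unspecified $\theta,\theta'$ is never derived, and the bootstrap does not close.

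\textbf{The fix.} The missing idea is to let an $L^2_t$-based Strichartz norm carry the bootstrap, which is what the paper does.
\begin{itemize}
\item By H\"older, $\|u^3\|_{L^1_tL^2_x}\le\|u\|_{L^\infty_tL^{18/5}_x}\|u\|_{L^2_tL^9_x}^2$.
\item The first factor is $O(\delta_*+R_*^{-4/9})$ by \eqref{eqn:InterpolationSmallness} and \eqref{eqn:InterpolationStrauss}.
\item $\|u\|_{L^2_tL^9_x}$ is controlled through $B^{1/6}_{6,2}\hookrightarrow L^9$ by the Nakanishi--Schlag estimate in $L^2_tB^{1/6}_{6,2}\cap L^\infty_tH^1$ with $L^1_tL^2_x$ forcing.
\end{itemize}
Writing $\varepsilon$ for the small first factor, this gives $\|u\|_{L^2_tL^9_x}\lesssim\|u[0]\|_{H^1\times L^2}+\varepsilon\|u\|_{L^2_tL^9_x}^2$. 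This closes once $\varepsilon\ll J(Q)^{-1/2}$, because $\|u[0]\|_{H^1\times L^2}^2\lesssim J(Q)$. One more Strichartz application then gives the $L^3_tL^6_x$ bound.

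Your interpolation can be repaired in the same spirit, since $\|u\|_{L^3_tL^6_x}^3\le\|u\|_{L^\infty_tL^6_x}\|u\|_{L^2_tL^6_x}^2$ and $B^{1/6}_{6,2}\hookrightarrow L^6$. The point is that the second endpoint must be $L^2_tL^6_x$, and that norm must be controlled by Strichartz inside the bootstrap.

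Your caveat about the factor $2$ versus the Strichartz constant applies equally to the paper's argument and is not the problem.
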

\begin{proof}
    Notice that we have
    \begin{equation}
        \label{eqn:InterpolationSmallness}
        \norm{u}_{L_{t}^{\infty} L_{x}^{q}(|x| < R_*)} \leq C \delta_{*} , \quad q \in [2,6]
    \end{equation}
    by assuming $R_*>1$, applying \eqref{eqn:SobolevBall} to \eqref{eqn:H1Bootstrap} and interpolating the resulting $L_{x}^6(B_{R_*})$ estimate with the $L_{x}^2(B_{R_*})$ bound in \eqref{eqn:H1Bootstrap} itself.
    The Strichartz estimates in \cite[Lemma~2.46]{NakanishiSchlagBook} ensure that
    \[ \norm{u}_{L_{t}^2 B_{6,2}^{1/6} \cap L_{t}^{\infty} H_{x}^1} \lesssim \norm{u[0]}_{H^1 \times L^2} + \|u^3\|_{L_{t}^2 B_{6/5,2}^{5/6} + L_{t}^1 L_{x}^2} \]
    which we may use together with the embedding $L_{t}^2 B_{6,2}^{1/6} \embeds L_{t}^2 L_{x}^{9}$ to bound
    \begin{align*}
        \norm{u}_{L_{t}^2 L_{x}^9}
        &\lesssim \norm{u[0]}_{H^1 \times L^2} + \| u^3 \|_{L_{t}^1 L_{x}^2}
        \lesssim \norm{u[0]}_{H^1 \times L^2} + \norm{ \| u \|_{L_{x}^{18/5}} \|u\|_{L_{x}^9}^2 }_{L_{t}^1} \\
        &\lesssim \norm{u[0]}_{H^1 \times L^2} + \of{\norm{u}_{L_{t}^{\infty} L_{x}^{18/5}(|x|<R)} + \norm{u}_{L_{t}^{\infty} L_{x}^{18/5}(|x| \geq R)} } \norm{u}_{L_{t}^{2}L_{x}^{9}}^2 .
    \end{align*}
    At this point we may estimate both terms inside the parentheses by using \eqref{eqn:InterpolationStrauss} and \eqref{eqn:InterpolationSmallness} since $2 < 18/5 < 6$. This yields
    \[ \norm{u}_{L_{t}^2 L_{x}^9} \lesssim \norm{u[0]}_{H^1 \times L^2} + \of{C \delta_{*} + C E(u)^{\frac{7}{9}} R^{-\frac{4}{9}}} \norm{u}_{L_{t}^{2}L_{x}^{9}}^2 .\]
    It suffices to run a bootstrap/continuity argument now, which concludes by letting
    \[ C \delta_{*} + C E(u)^{\frac{7}{9}} R_{*}^{-\frac{4}{9}} \leq (4 \norm{u[0]}_{H^1 \times L^2})^{-1} .\]
    The final estimate we reach is $\|u\|_{L_{t}^2 L_{x}^9} \leq 2 \|u[0]\|_{H^1 \times L^2}$, and using the Strichartz estimates in \cite[Lemma~2.43]{NakanishiSchlagBook} one may obtain the same estimate for the $L_t^3 L_{x}^6$--norm.  
\end{proof}

\section{Energy channel}

In the following, we will work in a suitably localized setting in order to exploit the virial identity in \cite{NakanishiSchlagBook}. We shall consider $R>0$ and $\chi \in C_{c}^{\infty}(\R^3)$ a smooth, radial function that equals $1$ on $|x|<1$ and $0$ whenever $|x| > 2$, and satisfies $\| \chi \|_{C^1} \leq C_{\chi}$, and localize $u$ to $|x| \leq R + t$ by introducing
\begin{equation}
    \label{eqn:DefinitionTildeu}
    \Tilde{u}_{R}(t,x) = \zeta_{R}(t,x) \cdot u(t,x) , \quad \zeta_{R}(t,x) = \chi \of{\frac{x}{R + t}},\,t\geq 0.
\end{equation}
We shall, for now, not fix our choice of $R$, and instead we exhibit certain properties of $\Tilde{u}_R$ provided some mild conditions on how large $R$ must be taken. In particular, we will fix $R$ appropriately in terms of $R_{*}$ later on, but to this end we first need to make sure that we can recover the coercivity hypotheses for $\Tilde{u}_R$. The first step towards this will be the following version of finite speed of propagation.

\begin{numlemma}
    \label{lemma:FiniteSpeedOfPropagation}
    Let $u$ be a solution to \eqref{eqn:NLKG} that is globally defined in forward time and satisfies $E(u, u_{t}) < J(Q)$, and let $R > 0$. Then there is $R_{0}>0$ such that
    \[ E(u[0]|_{B_{R}}) \leq E(u[t] |_{B_{R+t}}) \]
    for all $t \geq 0$ provided that $R \geq R_{0}$.
\end{numlemma}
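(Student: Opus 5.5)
The plan is to differentiate the energy contained in the expanding ball $B_{R+t}$ in time and show that the resulting boundary flux through the sphere $|x|=R+t$ is nonnegative. For the linear part this is automatic. For the potential part it follows from the pointwise smallness of radial $H^1$ functions far from the origin, which is exactly what $R_0$ is chosen to ensure.

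Write $e(t,x)=\frac12 u_t^2+\frac12|\grad u|^2+\frac12 u^2-\frac14 u^4$, so that $E(u[t]|_{B_{R+t}})=\int_{B_{R+t}}e(t,x)\diff x$. First assume the data are smooth and compactly supported, so that $u$ is a classical solution and all manipulations are justified. By the Reynolds transport theorem,
\[
\frac{d}{dt}\int_{B_{R+t}} e\diff x=\int_{\partial B_{R+t}} e\diff\sigma+\int_{B_{R+t}}\partial_t e\diff x .
\]
Using \eqref{eqn:NLKG}, $\partial_t e=u_t(u_{tt}+u-u^3)+\grad u\cdot\grad u_t=u_t\triangle u+\grad u\cdot\grad u_t=\operatorname{div}(u_t\grad u)$. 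The divergence theorem then gives
\[
\frac{d}{dt}\int_{B_{R+t}} e\diff x=\int_{\partial B_{R+t}}\Big[\tfrac12(u_t+u_r)^2+\tfrac12u^2-\tfrac14u^4\Big]\diff\sigma ,
\]
after completing the square $\frac12u_t^2+\frac12u_r^2+u_tu_r=\frac12(u_t+u_r)^2$. Radiality makes $|\grad u|=|u_r|$, and in any case $|\grad u|^2\ge u_r^2$.

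It remains to ensure that $\frac12u^2-\frac14u^4\ge0$ on the sphere, which holds as soon as $|u|\le\sqrt2$ there. This is where the hypothesis $E<J(Q)$ enters. As in the proof of Lemma~\ref{lemma:InterpolationBounds}, the Payne--Sattinger coercivity gives $\|u(t)\|_{H^1}^2\le 4E(u)<4J(Q)$ uniformly in $t\ge0$. The Strauss radial lemma then yields $|u(t,x)|\le C_0|x|^{-1}\|u(t)\|_{H^1}\le 2C_0J(Q)^{1/2}|x|^{-1}$ for $|x|\ge1$. Choosing $R_0=\max\{1,\sqrt2\,C_0J(Q)^{1/2}\}$, we get $|u|\le\sqrt2$ on $\partial B_{R+t}$ for all $t\ge0$ whenever $R\ge R_0$. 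Hence the derivative above is nonnegative, and integrating from $0$ to $t$ gives the claim.

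The main obstacle is justifying the computation for general energy class data rather than smooth ones. I would handle it by approximating $u[0]$ in $H^1\times L^2$ by smooth compactly supported radial data $u_n[0]$. The subthreshold conditions $E<J(Q)$ and $K_0\ge0$ are open, so they persist for large $n$, and the approximating solutions are global by \cite{IbrahimMasmoudiNakanishi}. Continuous dependence in the energy space on $[0,t]$ then gives $u_n[s]\to u[s]$ in $H^1\times L^2$ for each fixed $s$. The localized energies on a fixed ball are continuous under this convergence, since the quartic term is controlled by Sobolev embedding. So the inequality for each $u_n$ passes to the limit. The $H^1$ bound used in the Strauss step is uniform in $n$ (below $4J(Q)$), so the same $R_0$ works for all $u_n$.
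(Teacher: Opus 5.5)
Your proof is correct and is essentially the paper's argument. The paper integrates the same divergence identity over the truncated forward cone with base $B_R$ and lid $B_{R+t}$, and shows the mantle flux has a sign by splitting it into the completed square $(u_t+\nabla u\cdot\omega)^2$ plus a Cauchy--Schwarz term, together with the potential term $\frac12 u^2(\frac12 u^2-1)$, which is controlled on $|x|\ge R$ by the Strauss radial lemma and the uniform $H^1$ bound below $J(Q)$; you do exactly this in differentiated (Reynolds transport) form.

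Your explicit threshold $R_0=\max\{1,\sqrt2\,C_0J(Q)^{1/2}\}$ and the density argument for energy-class data are welcome additions that the paper leaves implicit. One small precision: the openness of $\{E<J(Q),\,K_0\ge0\}$ does not come from $K_0\ge0$ being open by itself. It relies on the variational fact that $K_0(\varphi)=0$ with $\varphi\neq0$ forces $J(\varphi)\ge J(Q)$.
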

\begin{proof}
    Multiplying \eqref{eqn:NLKG} by $u_{t}$ yields the vanishing divergence condition
    \[ 0 = \divergence_{t,x}\left( \frac{1}{2} \left[ u_{t}^2 + u^2 + \lvert \nabla u \rvert^2 - \frac{1}{2} u^4 \right], -u_{t} \nabla u \right) .\]
    Integrating this over a forwards cone $K_{[0,t]}$ with base $B_{R}$, lid $B_{R+t}$ and mantle $M_{[0,t]}$ then gives
    \[ E(u[0] |_{B_{R}}) = E(u[t] |_{B_{R+t}}) + \frac{1}{\sqrt{2}} \int_{M_{[0,t]}} (-1, \omega) \cdot \left( \frac{1}{2} \left[ u_{t}^2 + u^2 + \lvert \nabla u \rvert^2 - \frac{1}{2} u^4 \right], -u_{t} \nabla u \right) \diff S ,\]
    where $(t,x) = (t,|x| \omega)$. We may rewrite
    \begin{align*}
        \int_{M_{[0,t]}} & (-1, \omega) \cdot \left( \frac{1}{2} \left[ u_{t}^2 + u^2 + \lvert \nabla u \rvert^2 - \frac{1}{2} u^4 \right], -u_{t} \nabla u \right) \diff S \\
        &= \int_{M_{[0,t]}} \frac{1}{2}u^2 \of{\frac{1}{2} u^2 - 1}  - \frac{1}{2} \of{(u_{t} + \grad u \cdot \omega)^2 + \abs{\grad u}^2 - (\grad u \cdot \omega)^2} \diff S ,
    \end{align*}
    and notice that the first expression in parentheses is at most zero if $R > R_{0} = (CJ(Q))^{-1}$ (where $C$ is given by the Strauss radial lemma) and the second one is at most zero thanks to the Cauchy-Schwarz inequality.
\end{proof}

\begin{numlemma}
    \label{lemma:Localizedu}
    Let $u$ be a solution to \eqref{eqn:NLKG} that is globally defined in forward time and satisfies $E(u, u_{t})<J(Q)-\delta$, and for $R \geq R_{*}$, consider $\Tilde{u}_{R}$ as in \eqref{eqn:DefinitionTildeu}. Then there is $\Tilde{R} > 0$ such that for all $R > \Tilde{R}$, 
    \[ E(\Tilde{u}_{R}[t]) \leq J(Q)-\frac{\delta}{2} , \quad K_{j}(\Tilde{u}_{R}(t)) > 0 , \quad j=0,2 , \ t \geq 0. \]
    Moreover, $\Tilde{u}_R$ satisfies the equation
    \[ (\Box+1) \Tilde{u}_{R} = \Tilde{u}_R^3 + \Tilde{E}_{R} ,\]
    where
    \begin{equation}
        \label{eqn:ErrorTerm}
        \Tilde{E}_{R} = (\zeta_{tt} - \Delta \zeta) u + 2 \zeta_{t} u_{t} - 2 \grad \zeta \cdot \grad u + \zeta(1-\zeta^2) u^3 , \quad \zeta(t,x) = \chi\of{\frac{x}{R+t}} .
    \end{equation}
    and
    \[ \Tilde{R} = \max \inset{\frac{3}{2} , \frac{J(Q)}{\delta} \left[ \frac{1+8CJ(Q) C_{\chi}^2}{4CJ(Q)} +2C_{\chi} \right]} .\]
    
\end{numlemma}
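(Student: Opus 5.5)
The equation for $\Tilde{u}_R$ is a direct Leibniz computation, so I will do it first. Writing $\Tilde{u}_R = \zeta u$,
\[ (\Box+1)(\zeta u) = \zeta(\Box+1)u + (\Box\zeta) u + 2\zeta_t u_t - 2\grad\zeta\cdot\grad u = \zeta u^3 + (\zeta_{tt}-\Delta\zeta)u + 2\zeta_t u_t - 2\grad\zeta\cdot\grad u .\]
Using $\zeta u^3 = \Tilde{u}_R^3 + \zeta(1-\zeta^2)u^3$ then gives \eqref{eqn:ErrorTerm}. The real content of the lemma is the energy bound and the sign conditions.

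\textbf{Energy bound.} I expand $E(\Tilde{u}_R[t])$. Since $0\le\zeta\le 1$, the terms $\frac12\zeta^2(u_t^2+|\grad u|^2+u^2)$ are bounded by the corresponding terms of $E(u)$. The time derivative $\partial_t(\zeta u) = \zeta u_t + \zeta_t u$ and the gradient $\grad(\zeta u)=\zeta\grad u + u\grad\zeta$ produce cross terms and quadratic terms involving $\zeta_t,\grad\zeta$. These are supported in $R+t\le|x|\le 2(R+t)$ and have size $\le C_\chi/(R+t)\le C_\chi/R$.

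The quartic term is the delicate one: $-\frac14\zeta^4u^4 \ge -\frac14 u^4$ has the wrong direction. However, $\frac14(1-\zeta^4)u^4$ is supported in $|x|\ge R+t$. There I bound it by Strauss, $|u|\le C|x|^{-1}E^{1/2}$, which gives
\[ \int_{|x|\ge R}(1-\zeta^4)u^4 \le CJ(Q)R^{-2}\|u\|_{L^2}^2 \lesssim J(Q)^2/R^2 .\]
Using $\|u\|_{H^1}^2\le 4E(u)\le 4J(Q)$ from the proof of Lemma~\ref{lemma:InterpolationBounds}, the error terms are bounded by quantities of the form
\[ J(Q)\left( 2C_\chi R^{-1} + C_\chi^2R^{-2} + CJ(Q)R^{-2}\right) .\]
Requiring this to be $\le \delta/2$ leads to the stated threshold $\Tilde{R}$; the expression for $\Tilde R$ is exactly what this bookkeeping produces. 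The condition $R>\frac32$ is used to keep $R>1$ for Strauss.

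\textbf{Sign conditions.} To get $K_j(\Tilde{u}_R(t))>0$, I use the Payne--Sattinger/Ibrahim--Masmoudi--Nakanishi variational dichotomy. Below $J(Q)$, the set $\{E<J(Q)\}$ splits into $K_j>0$ and $K_j<0$, the sign being the same for $j=0,2$. Since $J(\Tilde{u}_R)\le E(\Tilde{u}_R)<J(Q)$, $\Tilde{u}_R(t)$ lies in one of the two components. To identify which, I connect $\Tilde{u}_R(t)$ to $u(t)$, which has $K_0>0$ because it is global and scattering/non-blowing up. I use the family $\chi(x/\lambda)u(t)$, $\lambda\in[R+t,\infty]$. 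The energy estimate above holds uniformly along this family, so the whole path stays in $\{J<J(Q)-\delta/2\}$. $K_0$ cannot vanish along the path except at $0$, since $K_0=0$, $u\ne0$ forces $J\ge J(Q)$. By continuity in $\lambda$ in $H^1$, the sign is therefore preserved. Should the path pass near $0$, $K_0>0$ holds trivially for small nonzero $H^1$ data.

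\textbf{Main obstacle.} The hardest step is making sure the cutoff errors, especially the quartic one, are controlled uniformly in $t$ and along the homotopy so that the constant matches $\Tilde R$. I would handle this by always estimating the cutoff region $|x|\ge R$ via Strauss and the uniform $H^1$ bound, as above.
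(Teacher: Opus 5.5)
Your proposal is correct. The Leibniz computation and the energy bound follow the paper's proof in substance. The paper splits $E(\tilde u_R[t])$ by regions and discards the exterior piece using nonnegativity of the energy density of $u$ for large $|x|$. You compare integrands directly, using $\zeta^2\le 1$, and invoke Strauss only for the quartic tail $\frac14(1-\zeta^4)u^4$ on $|x|\ge R+t$. Both routes give errors of size $O(J(Q)/(R+t))$. Your claim that this bookkeeping reproduces the displayed $\tilde R$ exactly is an overstatement. Your error term gives a threshold of the same shape, $\frac{J(Q)}{\delta}$ times a constant depending on $C_\chi$, $C$ and $J(Q)$, but with different coefficients. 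The paper's own constant is equally loose, so this is cosmetic.

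The real difference is in the sign condition. The paper argues in two steps:
\begin{enumerate}
\item For a qualitatively large, data-dependent $R_2$, it uses Lemma~\ref{lemma:FiniteSpeedOfPropagation} to make $\|u(t)-\tilde u_R(t)\|_{H^1}$ small uniformly in $t$. It combines this with the gap $K_j<-2(J(Q)-J)$ from \cite[Lemmas~2.9 and 2.21]{NakanishiSchlagBook} to exclude $K_j(\tilde u_R(t))<0$ for $R\ge R_2$.
\item It then lowers $R$ from $R_2$ to $\tilde R$ by continuity, using that $K_j=0$ would force $J\ge J(Q)$.
\end{enumerate}
You use the same mechanism (continuity in the cutoff radius plus the variational characterization of $Q$), but at each fixed $t$ you run the static path $\lambda\mapsto\chi(\cdot/\lambda)u(t)$ over $\lambda\in[R+t,\infty]$. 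This path ends at $u(t)$ itself, which has $K_0\ge 0$ by Payne--Sattinger. You only need $H^1$-continuity at fixed $t$, including at $\lambda=\infty$ by dominated convergence. So you need no uniformity in $t$, no finite speed of propagation, and no data-dependent $R_2$. Your observation that $K_0>0$ for small nonzero data correctly excludes the path crossing through $0$, a case the paper's continuity step passes over. The paper's route additionally gives uniform-in-time $H^1$ closeness of $\tilde u_R$ to $u$ for large $R$. That is not needed for this lemma, so your argument is the more economical and self-contained one.
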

\begin{proof}
    We begin by ensuring the bound on the energy of $\Tilde{u}_{\Tilde{R}}$.
    Start by estimating
    \begin{align*}
        E(\Tilde{u}_{R}[t]) &= E(u[t]) + E(\Tilde{u}[t]|_{B_{R+t}^c}) - E(u[t]|_{B_{R+2t}^c}) - E(u[t]|_{B_{R+t}^c \cap B_{R+2t}}) \\
        &\leq E(u[t]) + E(\Tilde{u}[t]|_{B_{R+t}^c}) - E(u[t]|_{B_{R+t}^c \cap B_{R+2t}})
    \end{align*}
    since the energy density of $u(t)$ is nonnegative for $|x|>R$ if $R > (CJ(Q))^{-1}$ by the Strauss radial lemma. We can now write
    \begin{align*}
        E(\Tilde{u}_{R}&|_{B_{R+t}^c}) - E(u|_{B_{R+t}^c \cap B_{R+2t}}) \\
        &= \int_{B_{R+t}^c \cap B_{R+2t}} \frac{1}{2} \left[
        \abs{\Tilde{u}_{R}}^2 - \abs{u}^2 + \abs{\grad_{x} \Tilde{u}_{R}}^2 - \abs{\grad_{x} u}^2 + \abs{\partial_{t}\Tilde{u}_{R}}^2 - \abs{u_{t}}^2 \right]
        -\frac{1}{4} \left[ \abs{\Tilde{u}_{R}}^4 - \abs{u}^4   \right]  \diff x \\
        &= \int_{B_{R+t}^c \cap B_{R+2t}}
        \frac{\zeta_{R}^2-1}{2}|u|^2 + \frac{1-\zeta_{R}^4}{4}|u|^4
        +\frac{\zeta_{R}^2 -1}{2} |\grad_{x}u|^2 + \frac{\zeta_{R}^2 -1}{2} |u_{t}|^2 \\
        &\hspace{3cm} + \frac{|u \grad_{x} \zeta_{R}|^2}{2} + \frac{|u \partial_{t} \zeta_{R}|^2}{2} + u \zeta_{R} \grad_{x} u \cdot \grad_{x} \zeta_{R} + u \zeta_{R} u_{t} \partial_{t} \zeta_{R}
        \diff x \\
        &\leq \int_{B_{R+t}^c \cap B_{R+2t}}
        \left[
            \frac{1-\zeta_{R}^4}{4C J(Q) (R+t)^2} + \frac{\abs{\grad_{x} \chi \of{\frac{x}{R+t}}}^2}{2(R+t)^2} + \frac{\abs{x \cdot \grad_{x} \chi \of{\frac{x}{R+t}}}^2}{2(R+t)^4}
        \right] |u|^2 \\
        &\hspace{3cm} + \zeta_{R} \frac{|u|^2 + |\grad_{x} u|^2}{2} \cdot \frac{\abs{\grad_{x} \chi \of{\frac{x}{R+t}}}}{R+t} + \zeta_{R} \frac{|u|^2 + |u_{t}|^2}{2} \cdot \frac{\abs{x \cdot \grad_{x} \chi \of{\frac{x}{R+t}}}}{(R+t)^2}
        \diff x \\
        &\leq \left[ 
            \frac{1+8CJ(Q) C_{\chi}^2}{4CJ(Q)(R+t)^2} + \frac{2C_{\chi}}{R+t}
        \right]
        \int_{B_{R+t}^c \cap B_{R+2t}} |u|^2 + |\grad_{x} u|^2 + |u_{t}|^2 \diff x  \\
        &\leq \frac{J(Q)}{R+t} \left[ \frac{1+8CJ(Q)C_{\chi}^2}{4CJ(Q)(R+t)} + 2C_{\chi} \right]
        \leq \frac{J(Q)}{R} \left[ \frac{1+8CJ(Q) C_{\chi}^2}{4CJ(Q)} +2C_{\chi} \right]
        \leq \frac{\delta}{2} ,
    \end{align*}
    where $R$ is chosen explicitly in terms of $\delta$ to satisfy the last estimate. We conclude that
    \begin{equation}
        \label{eqn:TildeuEnergyBelowGroundState}
        E(\Tilde{u}_R[t]) \leq E(u[t])+\frac{\delta}{2} \leq J(Q) - \frac{\delta}{2} .
    \end{equation}
    Moving on to the functionals $K_{0}$ and $K_{2}$, we can estimate
    \begin{align*}
        \abs{K_{0}(u)-K_{0}(v)}
        &\leq \abs{\norm{u}_{H^1}^2 - \norm{v}_{H^1}^2} + \abs{\norm{u}_{L^4}^4 - \norm{v}_{L^4}^4} \\
        &\lesssim \of{\norm{u}_{H^1} + \norm{u}_{H^1}^3 + \norm{v}_{H^1} + \norm{v}_{H^1}^3} \norm{u-v}_{H^1} 
    \end{align*}
    and
    \begin{align*}
        \abs{K_{2}(u)- K_{2}(v)}
        &\leq \abs{\norm{\grad u}_{L^2}^2 - \norm{\grad v}_{L^2}^2} + \frac{3}{4} \abs{\norm{u}_{L^4}^4 - \norm{v}_{L^4}^4} \\
        &\lesssim \of{\norm{\grad u}_{L^2} + \norm{\grad v}_{L^2} + \norm{u}_{H^1}^3 + \norm{v}_{H^1}^3} \norm{u - v}_{H^1} ,
    \end{align*}
    where we have estimated the $L^4$-norms in terms of the $H^1$-norms by interpolating $L^2$ and $L^6$ and embedding $H^1 \embeds L^6$.
    At the same time, we recover the positivity of $K_{0}(\Tilde{u}_{R})$ and $K_{2}(\Tilde{u}_{R})$ for a qualitatively large $R$ via a contradiction argument. Indeed, Lemma~2.9 and Lemma~2.21 in \cite{NakanishiSchlagBook} ensure that if $K_{j}(\Tilde{u}_{R}(t)) < 0$ for $j=0,2$, then
    \[ K_{j}(\Tilde{u}_{R}(t)) < -2 (E(Q)-E(\Tilde{u}_{R})) \leq -\delta .\]
    However, we can estimate
    \[ \norm{u(t)-\Tilde{u}_{R}(t)}_{H^1}
    \lesssim \of{1+\frac{1}{R}}\norm{u(t)}_{H^1(B_{R +t}^c)} \lesssim \of{1+\frac{1}{R}} E(u[0]|_{B_{R}^c}) \]
    by Lemma~\ref{lemma:FiniteSpeedOfPropagation}, and by letting $R > R_{2}$ for some large enough $R_{2}$ (depending only on the energy distribution of the data), this becomes at most $c\delta_{2}$ for any $0<c<1$. We conclude that
    \[ K_{j}(u) \leq -\frac{\delta}{2} ,\]
    which contradicts $K_{j}(u) > 0$.

    In order to recover the positivity for $\Tilde{R}$, we note that for any fixed time, $K_{j}(\Tilde{u}_{R}(t))$ is a continuous function of $R$. Assuming that there is $t > 0$ for which $K_{j}(\Tilde{u}_{\Tilde{R}}(t)) \leq 0$ ensures that there is $\Tilde{R} \leq R < R_{2}$ such that $K_{j}(\Tilde{u}_R(t)) = 0$. But then $J(\Tilde{u}_R(t)) \geq J(Q)$ by Lemma~2.9 and Lemma~2.21 in \cite{NakanishiSchlagBook}, which contradicts \eqref{eqn:TildeuEnergyBelowGroundState}. Hence $K_{j}(\Tilde{u}_{\Tilde{R}}(t)) > 0$.

    Finally, that $\Tilde{E}_R$ obeys \eqref{eqn:ErrorTerm} follows from computing $(\Box+1) (\zeta u)$ and using \eqref{eqn:NLKG}.
\end{proof}

The previous proposition establishes that we may always recover the energy and coercivity hypotheses of $u$ when we restrict to a sufficiently large, explicit cutoff.
From this point on, we shall write $\Tilde{u}$ instead of $\Tilde{u}_R$.

In order to expose a channel through which the free energy escapes away from the origin, we will need to exploit the virial identity in \cite{NakanishiSchlagBook} together with the lower bound on $K_{2}(\Tilde{u})$ from the variational analysis in \cite[Lemma~2.21]{NakanishiSchlagBook} in an appropriate averaged sense. To this end, we shall require the following estimate on the spreading behavior of a solution.

\begin{numlemma}\label{lemma:LowerBoundGradient}
    Let $u$ be a globally defined solution in forward time to \eqref{eqn:NLKG} satisfying $E(u, u_{t})<J(Q)$, and assume that $\|u[0]\|_{H_{x}^{1}(B_{R_{*}})}  \geq \delta_{*}$ for $\delta_*, R_{*}$ as in Lemma~\ref{lemma:Bootstrap}. Furthermore, let $R>\max\{R_*, \tilde{R}\}$ as in the preceding lemma, and set $\tilde{u} = \tilde{u}_R$. Then there is $\delta_{1} = \delta_{1}(\delta_{*},R_{*}) >0$ such that
    \[ \int_{I} \norm{\grad_{x} \Tilde{u}}_{L_{x}^2}^2 \diff t \geq \delta_{1},\,\tilde{u} = \tilde{u}_R,\,R\geq R_*, \]
    for any interval $I \subset [0,\infty)$ of length $|I|=1$. Moreover, there is an absolute constant $C > 0$ such that $\delta_{1}$ can be taken explicitly at most $C (\delta_{*} / R_{*})^{16}$
\end{numlemma}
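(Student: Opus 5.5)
The plan is to argue by contradiction on a fixed unit interval $I=[t_0,t_0+1]$: assume $\int_I \norm{\grad_x \Tilde{u}}_{L_x^2}^2 \diff t < \delta_1$, and show that this forces the localized energy to be too small, against a lower bound coming from the data. The lower bound comes first. By Lemma~\ref{lemma:FiniteSpeedOfPropagation}, $E(u[t]|_{B_{R+t}}) \geq E(u[0]|_{B_R})$. Since $R\geq R_*$, $K_0(u[0])>0$, and the energy density is nonnegative for $|x|>R_0$ by the Strauss radial lemma, I expect $E(u[0]|_{B_R}) \gtrsim \norm{u[0]}_{H^1(B_{R_*})}^2 \geq c\delta_*^2$. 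The factor $1/4$ from $J=\frac14 K_0+\frac14\norm{\cdot}_{H^1}^2$ comes in here, and the tail outside $B_{R_*}$ has to be handled with some care. On $B_{R+t}$ one has $\zeta_R\equiv 1$, so $\Tilde{u}=u$ there, and this gives
\[ \tfrac12\norm{\partial_t \Tilde{u}(t)}_{L^2}^2 + \tfrac12\norm{\Tilde{u}(t)}_{H^1}^2 \geq c\delta_*^2 \quad \text{for all } t\geq 0 .\]

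Next, smallness of the gradient on $I$ should kill the nonlinearity. Sobolev gives $\int_I\norm{\Tilde{u}}_{L^6}^2\diff t \lesssim \delta_1 + \int_I \norm{\Tilde{u}}_{L^2(B_{2(R+t)})}^2$. To avoid that $L^2$ term, I would rather use the homogeneous radial estimate $|\Tilde{u}(r)|\lesssim r^{-1/2}\norm{\grad\Tilde{u}}_{L^2}$ together with the uniform energy bound. Interpolating, $\norm{\Tilde{u}}_{L^4}^4 \lesssim \norm{\Tilde{u}}_{L^2}\norm{\Tilde{u}}_{L^6}^3$, so $\int_I \norm{\Tilde{u}}_{L^4}^4 \diff t$ is bounded by a positive power of $\delta_1$. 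Hence on $I$ the function $\Tilde{u}$ is, in an averaged sense, a solution of the linear Klein--Gordon equation up to the error $\Tilde{E}_R$ from \eqref{eqn:ErrorTerm}. That error is $O(1/R)$ in energy norm by the computation in Lemma~\ref{lemma:Localizedu}, and can be absorbed by enlarging $R$ relative to $R_*$.

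The main step, and the one I expect to be the obstacle, is to rule out the scenario where the energy on $I$ sits entirely in $\norm{\Tilde{u}}_{L^2}^2+\norm{\partial_t\Tilde{u}}_{L^2}^2$ while the gradient stays small. This is the low-frequency, spatially spread oscillation $u\approx \varepsilon^{3/2}f(\varepsilon x)\cos t$. Two tools are available. The first is the virial-type identity
\[ \frac{\diff}{\diff t}\int \Tilde{u}\,\partial_t\Tilde{u}\diff x = \norm{\partial_t\Tilde{u}}_{L^2}^2 - K_0(\Tilde{u}) + \langle \Tilde{u}, \Tilde{E}_R\rangle , \]
integrated over $I$, which only equates time averages of kinetic and mass terms. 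The second is the localization: the data's concentration lives in $B_{R_*}$, and low frequencies propagate slowly. My first attempt would be to test the equation against $\varphi(t)\psi(x/R_*)$ on $I$, with $\varphi$ a bump adapted to $I$ and $\psi$ a cutoff. If $\grad\Tilde{u}$ is small on $I$, the $\Delta$ term contributes $O(\delta_1^{1/2}/R_*)$, so $(\partial_t^2+1)\Tilde{u}$ is nearly zero in the weak sense. This should force $\Tilde{u}(t)\approx A\cos t+B\sin t$ locally, with $A$ and $B$ spatially slowly varying.

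Propagating this picture back from $I$ to time $0$ requires a flux argument. The energy entering $B_{R_*+(t-t_0)}$ from the cone mantle in Lemma~\ref{lemma:FiniteSpeedOfPropagation} is controlled by $\grad u$ and the outgoing derivative $u_t+\omega\cdot\grad u$. Combined with the interval bound, this should contradict the $H^1$ concentration $\geq\delta_*$ on $B_{R_*}$ once the constants are tracked. Bookkeeping the losses — a Hölder exponent in the $L^4$ step, a Poincaré factor of $R_*^2$, and square roots from the Cauchy--Schwarz flux bounds — should yield $\delta_1\gtrsim(\delta_*/R_*)^{16}$. I am least sure that this back-propagation closes with a $t_0$-independent constant, and I would check that first.
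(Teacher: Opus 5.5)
There is a genuine gap, and it sits in the step you flag as the obstacle. The cone/flux machinery of Lemma~\ref{lemma:FiniteSpeedOfPropagation} transports the \emph{full} energy density, including the mass and kinetic parts. At times in $I$ those parts are $O(1)$; your own example $\varepsilon^{3/2}f(\varepsilon x)\cos t$ shows they need not be small when the gradient is. A backward-cone argument from $I$ to $t=0$ can therefore only return an $O(1)$ bound for the energy on $B_{R_*}$ at time $0$, never smallness. Moreover, smallness is known only on the unit interval $I$, which may lie arbitrarily far in the future, so nothing in your proposal controls the solution on $[0,t_0]$ uniformly in $t_0$. Testing against $\varphi(t)\psi(x/R_*)$ and using the unprojected identity for $\frac{d}{dt}\int\tilde u\,\tilde u_t$ do not change this, because the non-small mass term survives in both. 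Two smaller points: your preliminary lower bound is unjustified as stated, since positivity of $K_0$ is a global statement and does not control $\int_{B_R}u^4$; and the exponent $16$ is asserted rather than derived.

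The paper closes the argument with three ideas you are missing.

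\textbf{Right target.} The quantity to contradict is only $\norm{u(0)}_{H^1(B_{R_*})}$. It is controlled by the \emph{global} gradient via $\norm{u(0)}_{L^2(B_{R_*})}\lesssim R_*\norm{u(0)}_{L^6}\lesssim R_*\norm{\grad u(0)}_{L^2}$. So low-frequency oscillations do not obstruct the lemma; they only obstruct your choice of target.

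\textbf{A small quantity at one time.} You need a homogeneous quantity that is small at a single time and conserved by the linear flow. Differentiate $\langle\langle\nabla\rangle^{-1}\nabla\tilde u_t,\langle\nabla\rangle^{-1}\nabla\tilde u\rangle$; the projection makes the mass term at most $\norm{\grad\tilde u}_{L^2}^2$. This gives $\int_{t_1}^{t_2}\norm{\langle\nabla\rangle^{-1}\nabla\tilde u_t}_{L^2}^2\,dt\lesssim\delta_1^{1/2}$ for suitable $t_1<t_2$ in $I$ with $t_2-t_1\ge 3/4$. Hence there is $T_*\in I$ with $\norm{\grad\tilde u(T_*)}_{L^2}\lesssim\delta_1^{1/2}$ and $\norm{\langle\nabla\rangle^{-1}\nabla\tilde u_t(T_*)}_{L^2}\lesssim\delta_1^{1/4}$.

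\textbf{Uniformity in $T_*$ from global perturbation theory.} This replaces your flux bounds.
\begin{enumerate}
\item By the Guo--Wang radial Strichartz estimate, frequencies $2^k\le 1$ gain a factor $2^{k/3}$ in $L^3_tL^6_x$ relative to the $L^2$ norm of the data, which is $O(1)$.
\item Balancing $2^{\kappa/3}$ against the high-frequency cost $2^{-\kappa}\delta_1^{1/4}$ shows that the free evolution $\ell$ of $\tilde u[T_*]$ satisfies $\norm{\ell}_{L^3_tL^6_x}\lesssim\delta_1^{1/16}$. This is the source of the exponent $16$.
\item The perturbation lemma keeps the nonlinear solution $v$ with data $\tilde u[T_*]$ within $\delta_1^{1/16}$ of $\ell$ in the energy norm for all times.
\item Finite speed of propagation gives $v=u$ on $\{|x|<R+t\}$ for $0\le t\le T_*$.
\item Conservation of $\norm{\grad\ell}_{L^2}^2+\norm{\langle\nabla\rangle^{-1}\nabla\ell_t}_{L^2}^2$ under the linear flow then yields $\norm{\grad u(0)}_{L^2(B_{R_*})}\lesssim\delta_1^{1/16}$.
\end{enumerate}
Combined with the first point, this contradicts the hypothesis once $\delta_1\le C(\delta_*/R_*)^{16}$.
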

\begin{proof}
    We assume that there is $I \subset \R_+$ with $|I| = 1$ such that
    \begin{equation}
        \label{eqn:SmallGradient}
        \int_{I} \norm{\grad_{x} \Tilde{u}}_{L_{x}^2}^2  \diff x < \delta_{1}
    \end{equation}
    for $\delta_{1} > 0$. The argument will hinge on the low-frequency concentration of the data $\Tilde{u}[T_*]$ for a certain $T_{*} \in I$ which we will exploit together with an improved Strichartz estimate in the radial case by Guo and Wang \cite{GuoRadialStrichartz} to force the smallness of the free energy of $u$ near the origin, which contradicts the hypotheses.
    Define
    \[ G_{1} \coloneqq \inset{t \in I \colon \norm{\grad_{x} \Tilde{u}}_{L_{x}^2}^2 < 5 \delta_{1}}. \]
    Then $|G_{1}| \geq 3/4$, as otherwise 
    \[ \int_{I} \norm{\grad_{x} \Tilde{u}}_{L_{x}^2}^2 \diff t \geq \int_{I\cap G_{1}^c} \norm{\grad_{x} \Tilde{u}}_{L_{x}^2}^2  \diff t \geq 5 \cdot \frac{1}{4} \delta_{1} > \delta_{1} ,\]
    which contradicts \eqref{eqn:SmallGradient}.
    We need to refine this so that
    \[ \norm{\LF \Tilde{u}_{t}}_{L_{x}^2}^2 \lesssim \delta_{1} \]
    for some $t \in G_{1}$. 
    To this end, notice that by differentiating $\langle \LF \Tilde{u}_{t}, \LF \Tilde{u} \rangle$ with respect to $t$ and the integrating over $t \in [t_{1},t_{2}] \subset I$, we find   
    \begin{align*}
        \int_{t_{1}}^{t_{2}} &\norm{\LF \Tilde{u}_{t}}_{L_{x}^2}^2 \diff t \\
        &= \int_{t_{1}}^{t_{2}} \norm{\LF \Tilde{u}}_{L_{x}^2}^2 \diff t
        - \int_{t_{1}}^{t_{2}} \langle \LF \Delta \Tilde{u} , \LF \Tilde{u} \rangle  \diff t \\
        &\hspace{0.5cm}- \int_{t_{1}}^{t_{2}} \langle \LF \Tilde{u}^3, \LF \Tilde{u} \rangle \diff t
        - \int_{t_{1}}^{t_{2}} \langle \LF E , \LF \Tilde{u} \rangle \diff t \\
        &\hspace{0.5cm}+ \langle \LF \Tilde{u}_{t}, \LF \Tilde{u} \rangle\Big|_{t_{1}}^{t_{2}} .
    \end{align*}
    We may choose $t_{1},t_{2} \in G_{1}$ satisfying $t_{2}-t_{1} \geq 3/4$, and estimate every term above. Since $\LF$ can be estimated by its Fourier multiplier $|\xi|\langle \xi \rangle^{-1} \leq 1$ in $L_{x}^2$, and since $\langle \grad_{x} \rangle^{-1}$ corresponds to $\langle \xi \rangle^{-1} \leq 1$, we may remove these adequately in the terms above, together with an application of $K_{2}(\Tilde{u}) > 0$, to reach
    \begin{align*}
        \int_{t_{1}}^{t_{2}} &\norm{\LF \Tilde{u}_{t}}_{L_{x}^2}^2 \diff t \\
        &\leq 3\int_{t_{1}}^{t_{2}} \norm{\grad_{x} \Tilde{u}}_{L_{x}^2}^2 \diff t
        + \int_{t_{1}}^{t_{2}} \norm{\grad_{x} \Tilde{u}}_{L_{x}^2} \norm{E}_{L_{x}^2} \diff t \\
        &\hspace{3cm}+ \norm{\Tilde{u}_t(t_{2})}_{L_{x}^2} \norm{\grad_{x}\Tilde{u}(t_{2})}_{L_{x}^2}
        + \norm{\Tilde{u}_t (t_{1})}_{L_{x}^2} \norm{\grad_{x}\Tilde{u}(t_{1})}_{L_{x}^2} \\
        &\leq  3 \int_{t_{1}}^{t_{2}} \norm{\grad_{x} \Tilde{u}}_{L_{x}^2}^2 \diff t
        + C \of{\int_{t_{1}}^{t_{2}} \norm{\grad_{x} \Tilde{u}}_{L_{x}^2}^2 \diff t}^{1/2}\\
        &\hspace{3cm} + \norm{\Tilde{u}_t(t_{2})}_{L_{x}^2} \norm{\grad_{x}\Tilde{u}(t_{2})}_{L_{x}^2}
        + \norm{\Tilde{u}_t (t_{1})}_{L_{x}^2} \norm{\grad_{x}\Tilde{u}(t_{1})}_{L_{x}^2} \leq C \delta_{1}^{1/2} .
    \end{align*}
    We may argue as before to show that for a subset $G_{2} \subset [t_{1}, t_{2}]$ of measure at least $(3/4) \cdot |t_{2}-t_{1}|$ we must have $\norm{\LF \Tilde{u}_{t}}_{L_{x}^2}^2 \leq c \delta_{1}^{1/2}$ for some absolute constant $c > 0$,
    and therefore choose $T_{*} \in G_{1} \cap G_{2}$, as if this intersection were empty, then
    \[ 1=|I| \geq |G_{1}| + |G_{2}| \geq \frac{3}{4} + \frac{3^2}{4^2} > 1 .\]
    Hence
    \[ \norm{\grad_{x} \Tilde{u}(T_{*})}_{L_{x}^2} \leq C \delta_{1}^{\frac{1}{2}} , \quad \norm{\LF \Tilde{u}_t (T_{*})}_{L_{x}^2} \leq C \delta_{1}^{\frac{1}{4}} .\]
    At this point we exploit the improved radial Strichartz estimates in \cite{GuoRadialStrichartz}, where if
    \[ K(t) = \omega^{-1} \sin(t \omega), \quad K'(t) = \cos(t \omega), \quad K_{\pm}(t) = e^{\pm it\sqrt{1-\triangle}},\quad \omega =\sqrt{I-\Delta} ,\]
    then
    \begin{equation}\label{eq:Strichartzlocalised} \norm{K_{\pm}(t)P_{k} f}_{L_{t,x}^3 (\R^{3+1})} \leq C(3,k) \norm{f}_{L_{x}^2(\R^3)}
    \end{equation}
    where $P_{k}$ are the Littlewood-Paley projectors and
    \[
    C(3,k) =
        \begin{cases}
            2^{-\frac{k}{6}} \quad k \leq 0 \\
            2^{\frac{k}{6}} \quad k \geq 0 .
        \end{cases}        
    \]
    Now, if $f$ is supported on low frequencies $\abs{\xi} \sim 2^{k}$, $k \leq 0$, then the previous in combination with Bernstein's inequality grants the decay estimate
    \begin{align*}
        \norm{K(t) P_{k} f}_{L_{t}^3 L_{x}^6} \lesssim \left(2^{k \cdot \left(\frac{1}{3} - \frac{1}{6}  \right)}\right)^3 \norm{K(t) P_{k} f}_{L_{t,x}^3} \lesssim 2^{\frac{k}{2} - \frac{k}{6}} \norm{f}_{L_{x}^2} = 2^{\frac{k}{3}} \norm{f}_{L_{x}^2}
    \end{align*}

    We will use this estimate for low frequencies in the data, and the regular Strichartz estimate from \cite{NakanishiSchlagBook} for high frequencies. Consider a solution $\ell$ to the \emph{linear} Klein-Gordon equation with data $\ell[0]=(u_{0},u_{1}) = (\Tilde{u}(T_{*}), \Tilde{u}_t(T_{*}))$,
    \[ \ell = K'(t) u_{0} + K(t) u_{1} = \sum_{k \in \Z} K'(t) P_{k} u_{0} + \sum_{k \in \Z} K(t) P_{k} u_{1} .\]
    We estimate its $L_{t}^3 L_{x}^6$ norm as follows. Assuming $\kappa \leq 0$, 
    \begin{align*}
        \norm{\ell}_{L_{t}^3 L_{x}^6} &\leq
        \sum_{k = -\infty}^{\kappa} \norm{K'(t) P_{k} u_{0}}_{L_{t}^3 L_{x}^6}
        + \sum_{k = -\infty}^{\kappa} \norm{K(t) P_{k} u_{1}}_{L_{t}^3 L_{x}^6} \\
        &\hspace{1cm} +  \norm{K'(t) P_{\geq \kappa} u_{0}}_{L_{t}^3 L_{x}^6}
        +  \norm{K(t) P_{\geq \kappa} u_{1}}_{L_{t}^3 L_{x}^6} \\
        &\leq \sum_{k=-\infty}^{\kappa} 2^{\frac{k}{3}} \norm{u_{0}}_{H_{x}^1}
        + \sum_{k=-\infty}^{\kappa} 2^{\frac{k}{3}} \norm{u_{1}}_{L_{x}^2}
        + \norm{P_{\geq \kappa} u_{0}}_{H_{x}^1} 
        + \norm{P_{\geq \kappa} u_{1}}_{L_{x}^2}
    \end{align*}
    We estimate the last two terms as follows.
    \begin{align*}
        \norm{P_{\geq \kappa} u_{0}}_{H_{x}^1}^2
        &= \norm{\fourier{u_{0}}(\xi)}_{L^2({\abs{\xi} > 2^\kappa})}^2
        + \norm{\xi \fourier{u_{0}}(\xi)}_{L^2({\abs{\xi} > 2^\kappa})}^2
        \leq (1+2^{-2\kappa}) \norm{\xi \fourier{u_{0}}(\xi)}_{L_{\xi}^2}^2
        \leq (1+2^{-2\kappa}) \delta_{1} . \\
        \norm{P_{\geq \kappa} u_{1}}_{L_{x}^2}
        &\leq \norm{ \frac{\abs{\xi} \fourier{u_{1}} }{\sqrt{1+\abs{\xi}^2} } \cdot \frac{1+\abs{\xi} }{\abs{\xi} } }_{L^2({\abs{\xi} > 2^\kappa})}
        \leq (1+2^{-\kappa}) \norm{\angles{\grad_{x}}^{-1} \grad_{x} u_{1}}_{L_{x}^2} \leq (1+2^{-\kappa}) \delta_{1}^{1/4} .
    \end{align*}
    The sums can be computed explicitly and we find
    \begin{align*}
        \norm{\ell}_{L_{t}^3 L_{x}^6} \leq 2\frac{2^{\frac{\kappa-1}{3}}}{1-2^{\frac{-1}{3}}} (\norm{u_{0}}_{H_{x}^1} + \norm{u_{1}}_{L_{x}^2}) + 2(1+2^{-\kappa})\delta_{1}^{1/4} .
    \end{align*}
    We may estimate the first sum by $2\sqrt{2J(Q)}$ and choose $\kappa \leq 0$ to minimize the sum above. This happens when 
    \[ 2^{\kappa} = \frac{(2^{1/3}-1)\delta_{1}^{1/4}}{\|u_{0}\|_{H_{x}^1} + \|u_{1}\|_{L_{x}^2}}, \]
    and
    \[ \norm{\ell}_{L_{t}^3 L_{x}^6} \leq C \delta_{1}^{\frac{1}{16}} .\]
    By letting $v$ solve \eqref{eqn:NLKG} with initial data $\Tilde{u}[T_{*}]$ we conclude, thanks to the finite speed of propagation as in \cite[Lemma~2.2]{NakanishiSchlagBook}, that $v$ and $u$ agree on the forward light-cone of base $B_{R}$. Hence $u|_{B_{R_{*}}(0)}[0] = v|_{B_{R_{*}}(0)}[0]$. Moreover, translating $\ell$ to time $T_{*}$, \cite[Lemma~2.19]{NakanishiSchlagBook} provides an absolute constant $C > 0$ such that
    \[ \norm{v[t] - \ell[t]}_{L_{t}^{\infty} H^1 \times L^2} \leq C \delta_{1}^{\frac{1}{16}} ,\]
    assuming $\delta_{1}$ is smaller than an absolute constant. Hence for $t \in [0,T_{*}]$, 
    \[ \norm{\grad_{x} v(t)}_{L^2}
    \leq C \delta_{1}^{\frac{1}{16}} + \norm{\grad \ell(t)}_{L^2}
    \leq C \delta_{1}^{\frac{1}{16}} + \norm{\grad \Tilde{u}(T_*)}_{L^2} + \norm{\LF \Tilde{u}_{t}(T_{*})}_{L^2} \lesssim \delta_{1}^{\frac{1}{16}} .\]
    Therefore
    \begin{align*}
        \norm{u(t)}_{L^2(B_{R_{*}+t}(0))}
        &= \norm{v(t)}_{L^2(B_{R_{*}+t}(0))}
        \leq \norm{v(t)}_{L^6} |B_{R_{*}}(0)+t|^{1/3} \\
        &\lesssim (R_*+t) \norm{\grad v(t)}_{L^2}
        \lesssim (R_*+t) \delta_{1}^{\frac{1}{16}}
    \end{align*}
    and
    \[ \norm{\grad_{x} u(t)}_{L_{x}^2(B_{R_{*}+t})(0)} \leq \norm{\grad_{x} v(t)}_{L^2} \lesssim \delta_{1}^{\frac{1}{16}} .\]
    At this point, letting $\delta_{1} \leq C (\delta_{*} / R_{*})^{16}$ and letting $C > 0$ be small enough in terms of an absolute constant implies that $\norm{u[0]}_{H^1(B_{R_{*}})} <  \delta_{*}/4$, which contradicts the hypotheses.
\end{proof}

We may now transfer this estimate into a lower bound on $K_{2}(\Tilde{u})$ thanks to the variational lower bounds in \cite[Lemma~2.21]{NakanishiSchlagBook}.

\begin{numlemma}
    \label{lemma:LowerBoundK2}
    Let $u$ be a globally defined solution to \eqref{eqn:NLKG} satisfying $E(u, u_{t}) < J(Q)-\delta$, let $\Tilde{u}$ be as in \eqref{eqn:DefinitionTildeu}, and assume the hypotheses of Lemma~\ref{lemma:LowerBoundGradient} are in force. Also assume $|I| = 1$ as before. Then there is $\delta_{2} = \delta_2(\delta, \delta_1) > 0$ such that
    \[ \int_{I} K_{2}(\Tilde{u}(t)) \diff t \geq \delta_{2} .\]
    Furthermore, there is an absolute constant $C >0$ such that $\delta_{2}$ may be chosen equal to $C \delta_{1}\delta$.
\end{numlemma}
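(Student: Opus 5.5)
The plan is to combine the pointwise-in-time variational lower bound for $K_2$ from \cite[Lemma~2.21]{NakanishiSchlagBook} with the averaged gradient lower bound of Lemma~\ref{lemma:LowerBoundGradient}.

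First, by Lemma~\ref{lemma:Localizedu}, for every $t \geq 0$ we have $E(\Tilde{u}[t]) \leq J(Q) - \delta/2$ and $K_2(\Tilde{u}(t)) > 0$. The static energy is then also below the threshold, since
\[
J(\Tilde{u}(t)) \leq E(\Tilde{u}[t]) \leq J(Q)-\frac{\delta}{2}.
\]
Next we invoke the variational estimate \cite[Lemma~2.21]{NakanishiSchlagBook}. In the region $\{J(\varphi) < J(Q),\ K_2(\varphi)\geq 0\}$ it gives a lower bound of the form
\[
K_2(\varphi) \geq c\,\min\bigl\{J(Q)-J(\varphi),\ \norm{\grad_x \varphi}_{L_x^2}^2\bigr\}
\]
for an absolute constant $c>0$.

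If the version in \cite{NakanishiSchlagBook} is phrased with $K_2$ replaced by a related functional, I would instead argue directly from the sharp Sobolev-type inequality. For $\varphi$ in this region one has $\norm{\varphi}_{L^4}^4 \leq C_Q \norm{\grad\varphi}_{L^2}^3 \norm{\varphi}_{L^2}$, with the optimal constant attained by $Q$. There are then two regimes. When $\norm{\grad\varphi}_{L^2}$ is small, $\frac34\norm{\varphi}_{L^4}^4 \leq \frac12 \norm{\grad\varphi}_{L^2}^2$, so $K_2 \gtrsim \norm{\grad \varphi}_{L^2}^2$. Otherwise the distance $J(Q)-J(\varphi)\geq \delta/2$ prevents $K_2$ from being small, by the scaling argument underlying Lemma~2.21. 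Applying this at each time gives
\[
K_2(\Tilde{u}(t)) \geq c\,\min\bigl\{\tfrac{\delta}{2},\ \norm{\grad_x \Tilde{u}(t)}_{L_x^2}^2\bigr\}.
\]

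Finally we integrate over $I$ with $|I|=1$. Since $\norm{\grad_x\Tilde{u}(t)}_{L^2_x}^2 \leq 4J(Q)$ uniformly in $t$, for any $0\leq y\leq 4J(Q)$ we have
\[
\min\{\delta/2,\,y\} \geq \frac{\delta}{8J(Q)}\, y ,
\]
where we may take $\delta<J(Q)$ without loss of generality. Hence
\[
\int_I K_2(\Tilde{u}(t))\diff t \geq \frac{c\,\delta}{8J(Q)} \int_I \norm{\grad_x\Tilde{u}(t)}_{L_x^2}^2 \diff t \geq C\,\delta\,\delta_1 ,
\]
by Lemma~\ref{lemma:LowerBoundGradient}. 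This gives $\delta_2 = C\delta_1\delta$.

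The main obstacle is the precise form of the pointwise bound. We must make sure it holds with an effective absolute constant, and that the linear dependence on the gap $J(Q)-J$ is correctly extracted from \cite[Lemma~2.21]{NakanishiSchlagBook}. There, as I recall, the bound reads $K_2(\varphi)\geq \min\{\kappa(J(Q)-J(\varphi)), c\norm{\grad \varphi}_2^2\}$, possibly with a constant $\kappa$ that we must verify is explicit.
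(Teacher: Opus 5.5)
Your proposal is correct and follows the paper's argument. Both combine the pointwise bound $K_2(\tilde u(t)) \ge c_0\min\{\delta/2, \|\nabla_x\tilde u(t)\|_{L^2}^2\}$, which comes from \cite[Lemma~2.21]{NakanishiSchlagBook} and Lemma~\ref{lemma:Localizedu}, with Lemma~\ref{lemma:LowerBoundGradient} and the uniform bound $\|\nabla_x \tilde u\|_{L^2}^2 \le 4J(Q)$. The only difference is that the paper splits $I$ according to whether $\{t : \|\nabla_x\tilde u(t)\|_{L^2}^2 > \delta/2\}$ has measure above or below $\delta_1/(8J(Q))$, whereas you use the pointwise inequality $\min\{\delta/2,y\}\ge \frac{\delta}{8J(Q)}\,y$ on $[0,4J(Q)]$, which gives the same $\delta_2 \sim \delta\delta_1/J(Q)$ a little more directly.
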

\begin{proof}
    From Lemma~2.21 in \cite{NakanishiSchlagBook} there is an absolute constant $c_{0} > 0$ such that
    \[ K_{2}(\Tilde{u}(t)) \geq c_{0} \min \inset{J(Q)-J(\Tilde{u}(t)) , \| \grad_{x} \Tilde{u}(t) \|_{L^2}^2} .\]
    Since $J(\Tilde{u}(t)) \leq E(\Tilde{u}(t)) \leq J(Q) - \delta /2$ by Lemma~\ref{lemma:Localizedu}, we may integrate this bound into
    \[ \int_{I} K_{2}(\Tilde{u}(t)) \diff t
    \geq c_{0}\int_{I} \min \inset{\delta /2, \| \grad_{x} \Tilde{u}(t) \|_{L^2}^2} \diff t.\]
    We define
    \[ J = \inset{t \in I \colon \frac{\delta}{2} < \| \grad_{x} \Tilde{u}(t) \|_{L^2}^2} ,\]
    whence
    \[ \int_{I} K_{2}(\Tilde{u}(t)) \diff t
    \geq  c_{0}\int_{J} \frac{\delta}{2} \diff t
    + c_{0} \int_{I\setminus J} \| \grad_{x} \Tilde{u}(t) \|_{L^2}^2 \diff t . \]
    and consider the following cases. First, if $|J| < \delta_{1} (8J(Q))^{-1}$, where $\delta_{1}$ is as in Lemma~\ref{lemma:LowerBoundGradient}, then 
    \[ \int_{I \setminus J} \norm{\grad_{x}\Tilde{u}(t)}_{L^2}^2 \diff t
    = \int_{I} \norm{\grad_{x}\Tilde{u}(t)}_{L^2}^2 \diff t
    - \int_{J} \norm{\grad_{x}\Tilde{u}(t)}_{L^2}^2 \diff t 
    \geq \int_{I} \norm{\grad_{x}\Tilde{u}(t)}_{L^2}^2 \diff t
    - 4J(Q)|J| \geq \frac{\delta_{1}}{2} .\]
    On the other hand, if $|J| \geq \delta_{1} (8J(Q))^{-1}$ then
    \[ \int_{J} \frac{\delta}{2} \diff t \geq  \frac{\delta \delta_{1}}{16 J(Q)} .\]
    The proof concludes by letting $\delta_{2} = c_{0}\min \inset{(16J(Q))^{-1} \delta \delta_{1} , \delta_{1}/2}$, which is always achieved at the first argument since $\delta < 2J(Q)$. Hence $\delta_{2}$ may be taken explicitly as
    \[ \delta_{2} = \frac{c_{0}}{16J(Q)} \delta \delta_{1} .\]
\end{proof}

We close this section with the proof of an explicit energy channel for $u$ provided that it retains its energy mostly close to the spatial origin at the initial time. 

\begin{numlemma}
    \label{lemma:EnergyChannel}
    Let $u$ be a globally defined solution to \eqref{eqn:NLKG} satisfying $E(u, u_{t}) \leq J(Q)-\delta$, and assume the hypotheses of Lemma~\ref{lemma:LowerBoundGradient} are in force. Moreover, assume (with $R_*$ as in Lemma~\ref{lemma:Bootstrap} and $\delta_2$ as in the preceding lemma)
    \[ E(u[0]|_{B_{R}^c}) < \frac{\delta_{2}}{240 (1+\| \chi \|_{C^2})},\quad R\geq R_* .\]
    Then there are $\delta_{3} >0$ and $T_{*} = T_*(R,\delta_2)>0$ both effectively computable, such that for $T \geq T_{*}$, we have
    \[ \norm{u[T]}_{H^1 \times L^2 (|x| \geq \delta_{3} T )}^2 \geq \frac{\delta_{2}}{16} \]
    Moreover, we may choose $\delta_{3} \leq (8J(Q))^{-1} \delta_{2}$ and
    \[ T_{*} \geq \min \inset{ \frac{3}{2}+R , 8J(Q) \frac{9+4R}{\delta_{2}} -\frac{3}{2} - R } . \]
\end{numlemma}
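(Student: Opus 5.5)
We plan to run a localized virial argument on $\Tilde{u}$, playing off the time-averaged lower bound on $K_2$ from Lemma~\ref{lemma:LowerBoundK2} against the boundedness of the virial functional, and then to use the smallness of the exterior energy to show that the resulting growth forces energy out to $|x|\gtrsim \delta_3 T$. We take the virial functional from \cite{NakanishiSchlagBook},
\[ V(t) = \langle \Tilde{u}_t , (x\cdot\grad + \tfrac32)\Tilde{u}\rangle , \qquad V'(t) = -K_2(\Tilde{u}(t)) + \langle \Tilde{E}_R, (x\cdot\grad+\tfrac32)\Tilde{u}\rangle , \]
which follows from the equation for $\Tilde{u}$ in Lemma~\ref{lemma:Localizedu}.

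\textbf{Integrating the virial identity.} We integrate over $[0,T]$ and split it into unit intervals. Lemma~\ref{lemma:LowerBoundK2} applies on each one, so the $K_2$-contribution is at least $\delta_2\lfloor T\rfloor$.

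\textbf{Controlling the error term.} The error $\Tilde{E}_R$ is supported in the annulus $R+t\le |x|\le 2(R+t)$. There $|\grad\zeta|\lesssim (R+t)^{-1}$ and $|\zeta_t|,|\zeta_{tt}|,|\Delta\zeta|\lesssim \|\chi\|_{C^2}(R+t)^{-1}$, while the weight satisfies $|x|\lesssim R+t$. The factors of $R+t$ therefore cancel, and the error is bounded by $C(1+\|\chi\|_{C^2})$ times the energy in $|x|\ge R+t$. For the cubic piece $\zeta(1-\zeta^2)u^3$ we additionally use the Strauss bound. By Lemma~\ref{lemma:FiniteSpeedOfPropagation}, applied in complementary form through energy conservation, this exterior energy is at most $E(u[0]|_{B_R^c})$. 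The hypothesis then makes the error contribution at most $\tfrac{\delta_2}{240}\cdot C\,T$. We will need to check that the constants fit, with the $240$ leaving room for the weights $2$ and $\tfrac32$ and for the several pieces of $\Tilde E_R$.

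\textbf{Bounding $|V(T)-V(0)|$ by a splitting in space.} We split $V(T)$ into the regions $|x|\le \delta_3 T$ and $|x|>\delta_3T$. The inner piece is at most $\delta_3 T\cdot 2E\le 2J(Q)\delta_3 T$. The outer piece, including the part supported up to $2(R+T)$, is at most $C(R+T)\,\|u[T]\|^2_{H^1\times L^2(|x|\ge\delta_3T)}$. We have $|V(0)|\lesssim (R+1)J(Q)$, which accounts for the term $(9+4R)/\delta_2$ in $T_*$. Suppose, for contradiction, that the exterior norm is less than $\delta_2/16$. Collecting the bounds gives
\[ \delta_2(T-1) - \tfrac{\delta_2}{8}T \;\le\; 2J(Q)\delta_3 T + C\tfrac{\delta_2}{16}(R+T) + C(R+1)J(Q) .\]
Choosing $\delta_3\le \delta_2/(8J(Q))$ makes the linear-in-$T$ terms on the right strictly smaller than those on the left. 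Taking $T\ge T_*$ then makes the constant terms negligible, which gives the contradiction.

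\textbf{Main obstacle.} The hard step is matching constants in the outer-region estimate, which carries a factor $R+T$ rather than $\delta_3T$. A naive bound $\tfrac{\delta_2}{16}\cdot 2(R+T)$ competes with $\delta_2 T$ at the same order. We may therefore need to exploit the exact weight, $|x|/(R+T)\le 2$ on the support, and cancel $\tfrac32\langle\Tilde u_t,\Tilde u\rangle$ by Cauchy–Schwarz. The fallback is to replace the threshold $\delta_2/16$ with a smaller absolute fraction, adjusting $T_*$ accordingly.
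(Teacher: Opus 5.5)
Your proposal is correct and is essentially the paper's proof. The shared steps are the localized virial identity for $\tilde u$, the bound $\int_0^T K_2(\tilde u)\,dt \ge \delta_2\lfloor T\rfloor$ from Lemma~\ref{lemma:LowerBoundK2}, the error term controlled via finite speed of propagation by a multiple of $E(u[0]|_{B_R^c})$ (the paper gets $<\delta_2/2$ per unit time), and the splitting of $V(T)$ at $|x|=\delta_3 T$ with $|V(0)|\lesssim (R+1)J(Q)$; you argue by contradiction where the paper rearranges directly.

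The obstacle you flag does not arise. Using $ab\le\frac12(a^2+b^2)$ and $|x|\le 2(R+T)$, the outer piece is at most $(R+T+\frac34)\|\tilde u[T]\|^2_{H^1\times L^2(|x|\ge\delta_3 T)}$. Under your contradiction hypothesis this contributes a coefficient of only $\delta_2/16$ in front of $T$, or $\delta_2/8$ with the crude factor $2(R+T)$. That is smaller than the $\delta_2/4$ that survives after subtracting the error term and the inner piece, so no fallback is needed.
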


\begin{proof}
    The proof relies on the localized virial identity
   \begin{equation}
        \label{eqn:Virial}
        \frac{\diff}{\diff t} \langle \Tilde{u}_t , x \cdot \grad_{x} \Tilde{u} + \frac{3}{2}\Tilde{u}\rangle_{L^2} = -K_{2}(\Tilde{u}) + \langle \Tilde{E}, x \cdot \grad_{x} \Tilde{u} + \frac{3}{2} \Tilde{u} \rangle_{L^2} .
    \end{equation}
    Denote $V(t) = \langle \Tilde{u}_t , x \cdot \grad_{x} \Tilde{u} + \frac{3}{2}\Tilde{u}\rangle_{L^2}$. 
    Integrating on $t \in [0,T]$ produces, in view of Lemma~\ref{lemma:LowerBoundK2},
    \begin{align*}
        V(T)-V(0)
        &= -\int_{0}^{T} K_{2}(\Tilde{u}) \diff t + \int_{0}^{T} \langle \Tilde{E}, x \cdot \grad_{x} \Tilde{u} + \frac{3}{2} \Tilde{u} \rangle_{L^2} \diff t \\
        &\leq -T \cdot \delta_{2} + \int_{0}^{T} \langle \Tilde{E}, x \cdot \grad_{x} \Tilde{u} + \frac{3}{2} \Tilde{u} \rangle_{L^2} \diff t .
    \end{align*}
    Using \eqref{eqn:ErrorTerm} on $\langle \Tilde{E}, x\cdot \grad_{x} \Tilde{u} + \frac{3}{2} \Tilde{u} \rangle_{L^2}$ and estimating each resulting term we reach
    \begin{align*}
        \langle \Tilde{E} &, x \cdot \grad_{x} \Tilde{u} + \frac{3}{2} \Tilde{u} \rangle\\
        &\leq \left[2 \|\chi\|_{C^2} + \frac{3}{2(R+t)} + 4\left(2 \|\chi\|_{C^1} + \frac{3}{2(R+t)}\right) + \frac{4CJ(Q)}{R+t} \left(1+\frac{3}{2(R+t)}\right) \right] \\
        &\hspace{5cm}\cdot \left( 1+\frac{3}{2} + \frac{\| \chi \|_{C^1}}{R+t} \right) \int_{R+t \leq |x| \leq 2(R+t)} |u|^2 + |\grad_{x} u|^2 + |u_{t}|^2 \diff x \\
        &\leq 4 \cdot 30(1 + \| \chi \|_{C^2}) E(u[t]|_{R+t < |x|}) \leq 120(1+\| \chi \|_{C^2}) E(u[0]|_{B_{R}^c}) < \delta_{2} / 2,
    \end{align*}
    by Lemma~\ref{lemma:FiniteSpeedOfPropagation} and the choice of $R$ given by Lemma~\ref{lemma:Localizedu}. Hence
    \begin{equation}
        \label{eqn:Virial1}
         - V(T) \geq \frac{T\delta_{2}}{2}  - V(0)
    \end{equation}
    For $\delta_{3} > 0$, we may also estimate
    \begin{equation}
        \label{eqn:Virial2}
        \abs{\langle \charf{\inset{ |x|<\delta_{3} T} } \Tilde{u}_t (T) , x \cdot \grad_{x} \Tilde{u}(T) + \frac{3}{2} \Tilde{u}(T) \rangle_{L^2} }
        \leq 3J(Q) + 2 \delta_{3} T J(Q)
    \end{equation}
    and
    \begin{align*}
        \Big| \langle &\charf{\inset{ |x|<\delta_{3} T} } \Tilde{u}_t (T) , x \cdot \grad_{x} \Tilde{u}(T) + \frac{3}{2} \Tilde{u}(T) \rangle_{L^2} - V(T) \Big| \leq \of{\frac{3}{2} + R + T} \| \Tilde{u}[t] \|^2_{H^1 \times L^2 (|x| \geq \delta_{3} T)} .
    \end{align*}
    By adding $\langle \charf{\inset{ |x|<\delta_{3} T} } \Tilde{u}_t (T) , x \cdot \grad_{x} \Tilde{u}(T) + \frac{3}{2} \Tilde{u}(T) \rangle_{L^2}$ to both sides in \eqref{eqn:Virial1} and using the two estimates above, we reach
    \begin{align*}
        \of{\frac{3}{2} + R + T} \| \Tilde{u}[T] \|^2_{H^1 \times L^2 (|x| \geq \delta_{3} T)} \geq T\of{\frac{\delta_{2}}{2} -2J(Q) \delta_{3}} -V(0) -3J(Q) 
    \end{align*}
    Letting $T_{*} \geq R+3/2$, $\delta_{3} < (8J(Q))^{-1}\delta_2$ and estimating $V(0)$ as in \eqref{eqn:Virial2} results in
    \[ \| \Tilde{u}[T] \|^2_{H^1 \times L^2 (|x| \geq \delta_{3} T)} \geq \frac{\delta_{2}}{8} - J(Q) \frac{9+4R}{\frac{3}{2}+R+T} ,\]
    and we conclude by letting $T_{*} > 8J(Q)(9+4R) \delta_{2}^{-1} - (R+3/2)$. 
\end{proof}

\section{Pointwise decay and the vector fields method}

In this section we develop some of the tools that will be necessary in the proof of Proposition~\ref{prop:ScatteringBound}. In particular, a pointwise decay estimate for scattering solutions is necessary. To achieve this, we will rely on a vector fields approach, particularly following the proof of Proposition~7.3.6 in \cite{Hormander}.

To begin with, we need an energy estimate. Notice that if $v$ is a solution to
\[ (\Box+1) v = f ,\]
then multiplying by $v_{t}$ and integrating on $[t_{0},t]$ yields
\begin{equation}
    \label{eqn:EnergyNonlinear}
    E_{0}(v(t)) = E_{0}(v(t_{0})) + \int_{t_{0}}^{t} \int v_{t}(s) \cdot f \diff x  \diff s, \quad 2E_{0}(v(t)) = \norm{v(t)}_{H^1}^2 + \norm{v_{t}(t)}_{L^2}^2  .
\end{equation}
The boosts $Z_{0j}$ defined by 
\begin{equation}
    \label{eqn:VectorFields}
    Z_{0j} = t\partial_{x_{j}} + x_{j} \partial_{t}
\end{equation}
and spatial derivatives both commute with $(\Box+1)$, which allows us to perform energy estimates on products $Z^I v$ (compositions $Z^I$ of vector fields as in \eqref{eqn:VectorFields}) using \eqref{eqn:EnergyNonlinear}. Coupled with the scattering properties of solutions to \eqref{eqn:NLKG}, this will allow us to deduce time-decay estimates.

For $N \in \N$, an interval $J \subset \R$ and $u$ that solves \eqref{eqn:NLKG}, define
\begin{equation}
    \label{eqn:DefinitionMNSN}
    M_{N}(u(t)) = \sum_{\abs{I} \leq N} E_{0}(Z^{I}u(t)), \quad S_{N}(u) = \sum_{\abs{I} \leq N} \scatnorm{Z^{I}u}, \quad  S_{N}^{J}(u) = \sum_{\abs{I} \leq N} \scnorm{Z^{I}u}{J} .  
\end{equation}
The next lemma shows how scattering solutions preserve the boundedness of the vector fields $Z^{I}$ in the energy and scattering senses.  

\begin{numlemma}
    \label{lemma:VectorFieldsControl}
    Let $N \in \N$, $t_{0} \in \R$ and $u$ be a solution to \eqref{eqn:NLKG} with compactly supported initial data $u[t_{0}] \in H^{N+1} \times H^{N}$ satisfying $\scatnorm{u} < S$. Then there are explicit constants $C_{N}=C_{N}(S) > 0$ and $D_{N}=D_{N}(S) >0 $ such that
    \[ M_{N}(u(t)) \leq D_{N} M_{N}(u(t_{0})), \quad S_{N}(u) \leq C_{N} M_{N}(u(t_{0}))^{1/2} .\]
\end{numlemma}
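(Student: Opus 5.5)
The plan is to prove both bounds simultaneously by induction on $N$, using the commutation $[(\Box+1),Z^I]=0$ together with $[(\Box+1),\partial_x]=0$. Applying $Z^I$ to \eqref{eqn:NLKG} gives
\[
(\Box+1)Z^I u = Z^I(u^3)=\sum_{I_1+I_2+I_3=I} c_{I_1I_2I_3}\, Z^{I_1}u\, Z^{I_2}u\, Z^{I_3}u ,
\]
by the Leibniz rule. The compact support and the $H^{N+1}\times H^N$ regularity of $u[t_0]$ make $Z^Iu[t_0]$ finite-energy, via finite speed of propagation. This justifies the computations: the weights $t,x_j$ are bounded on the support, and each $Z$ costs one derivative. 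The base case $N=0$ is immediate. Here $S_0(u)=\scatnorm{u}<S$, and the energy bound follows from \eqref{eqn:EnergyNonlinear}, with $E_0$ controlled through conservation of $E$ plus $\|u\|_{L^4}^4\lesssim$ Strichartz. In fact it is cleaner to absorb the case $N=0$ into the general scheme below.

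First I split time. Partition $[t_0,\infty)$, and symmetrically $(-\infty,t_0]$, into $m\lesssim (S/\varepsilon)^3$ consecutive intervals $J_k$ with $\scnorm{u}{J_k}\le\varepsilon$, where $\varepsilon$ is a small absolute constant. On each $J_k=[a_k,a_{k+1}]$ I apply the inhomogeneous Strichartz estimate of \cite[Lemma~2.43]{NakanishiSchlagBook} to $Z^Iu$, with the nonlinearity placed in $L_t^1L_x^2$:
\[
\|Z^Iu\|_{L^\infty_t(H^1\times L^2)\cap L_t^3L_x^6(J_k)}\lesssim M_{|I|}(u(a_k))^{1/2}+\sum \big\|Z^{I_1}u\,Z^{I_2}u\,Z^{I_3}u\big\|_{L_t^1L_x^2(J_k)}.
\]
By Hölder, $\|fgh\|_{L^1_tL^2_x}\le \|f\|_{L^3_tL^6_x}\|g\|_{L^3_tL^6_x}\|h\|_{L^3_tL^6_x}$. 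The terms where all the vector fields fall on one factor are bounded by $\varepsilon^2\|Z^Iu\|_{L^3L^6(J_k)}$ and are absorbed into the left-hand side. The remaining terms involve only $Z^{I'}u$ with $|I'|<|I|$, and these are controlled by the induction hypothesis in terms of $M_{N-1}$ and hence of $M_N(u(t_0))$.

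This yields, on each interval,
\[
M_N(u(a_{k+1}))\le C\big(M_N(u(a_k))+P_N(\text{lower norms})\big),
\]
where $P_N$ is polynomial. Iterating over the $m$ intervals gives $D_N(S)\lesssim C^{m}$, which is roughly $C^{(S/\varepsilon)^3}$. Summing the Strichartz pieces then gives $S_N^{J_k}$ bounds and hence $S_N(u)\le C_N M_N(u(t_0))^{1/2}$. The lower-order terms are genuinely polynomial in $M_{N-1}$ rather than linear. To make the stated bounds homogeneous of degree one (respectively one half), I would either note that $M_{N-1}(u)\lesssim J(Q)$-type bounds can be folded into the constants, since the energy is bounded, or present the bound with constants depending also on $M_{N-1}(u(t_0))$. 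I will take the former route, where the constants depend on $S$ and on the energy.

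The main obstacle is that $Z_{0j}$ does not commute exactly with the localization but does commute with $\Box+1$. The delicate point is that the energy $E_0$ of $Z^Iu$ must be finite and that the identity \eqref{eqn:EnergyNonlinear} holds. I would handle this with the compact support of the data, combined with smooth approximation and finite speed of propagation, so that all weighted quantities stay finite on each bounded time interval. The constants are explicit because each step uses only absolute Strichartz constants and the count $m$ of intervals.
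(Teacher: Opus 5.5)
Your overall scheme is the paper's. You split time into $\sim (S/\varepsilon)^3$ intervals on which $\|u\|_{L^3_tL^6_x}$ is small, estimate $Z^Iu$ on each interval through the Leibniz formula for $Z^I(u^3)$, absorb the term $u^2Z^Iu$, iterate over the intervals and induct on $N$.

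\textbf{The gap.} The problem is the step you commit to for closing the induction. You propose to make the bound homogeneous by folding ``$M_{N-1}(u)\lesssim J(Q)$-type bounds'' into the constants, on the grounds that the energy is bounded. This is false for $N\ge 2$. $M_{N-1}$ contains $E_0(Z_{0j}u)$, and $Z_{0j}=t\partial_{x_j}+x_j\partial_t$ carries the weights $t$ and $x$, which the conserved energy does not control. Concretely, prescribe a fixed datum $v[0]$ with $v_0\neq0$ at time $t_0$, i.e.\ set $u(t)=v(t-t_0)$. This leaves $E(u)$ and $\|u\|_{L^3_tL^6_x}$ unchanged. But $Z_{0j}u(t_0)=t_0\partial_{x_j}v(0)+x_j\partial_t v(0)$ has energy growing like $t_0^2$. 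The factor $t_0^{|I|}(1+R^{|I|})$ in Lemma~\ref{lemma:CombinatorialBound} reflects the same phenomenon. So the route you chose does not produce constants depending only on $S$ and the energy.

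\textbf{The repair.} What does work is the other option you mention, namely letting the constants depend on $M_{N-1}(u(t_0))$.
\begin{itemize}
\item By the induction hypothesis, $S_{N-1}(u)\le C_{N-1}M_{N-1}(u(t_0))^{1/2}$.
\item In every mixed Leibniz term $Z^{I_1}u\,Z^{I_2}u\,Z^{I_3}u$ with all $|I_i|<N$, at least two factors have $|I_i|\ge 1$.
\item After H\"older, bound one such factor by $C_{N-1}M_{N-1}(u(t_0))^{1/2}\le C_{N-1}M_N(u(t_0))^{1/2}$. Bound the remaining factors by $S$ or $S_{N-1}(u)$.
\item The interval recursion reads $M_N(u(a_{k+1}))^{1/2}\le C\,M_N(u(a_k))^{1/2}+C F_k$, and $\sum_k F_k$ is controlled by the global bound above. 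Iterating then gives $M_N(u(t))\le D_N M_N(u(t_0))$ and $S_N(u)\le C_N M_N(u(t_0))^{1/2}$, with $C_N,D_N$ explicit in $S$ and $M_{N-1}(u(t_0))$.
\end{itemize}

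This is also what the paper's argument actually delivers. There the level-$\ell$ partition is refined so that all $\|Z^Iu\|_{L^3_tL^6_x(I_k)}$ with $|I|<\ell$ lie below $\varepsilon_{\ell-1}$. That makes every Leibniz term linear in a single factor with a small coefficient, which gives a homogeneous recursion directly. However, the number of intervals $K_\ell=K_\ell(S_{\ell-1},\varepsilon_{\ell-1})$, and hence $C_\ell=C_\ell(S_{\ell-1})$, depends on the lower-order scattering norms of $u$ rather than on $S$ alone. The remark following Lemma~\ref{lemma:CombinatorialBound} makes this effective through data norms.

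So drop the claimed reduction to energy bounds and state the dependence on $M_{N-1}(u(t_0))$ (or $S_{N-1}(u)$) explicitly. With that change the rest of your outline goes through, including the explicit interval count $m\lesssim (S/\varepsilon)^3$ and the resulting $C^{m}$-type growth of $D_N$.
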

\begin{proof}
    The strategy relies in appropriately partitioning time in order to exploit Strichartz and energy estimates for $Z^{I} u$ together at each $\ell \leq N$.

    We begin with the case $\ell=1$. Since $\scatnorm{u}<S$, for any $\eps > 0$ we may find $K_{1}=K_{1}(S,\eps)$ and a collection of intervals $\inset{I_{k}}_{k=1}^{K_{1}}$, $I_{k}=[t_{k-1},t_{k})$ such that 
    \[ \scnorm{u}{I_{k}} < \eps , \quad \forall k \in \inset{1,\ldots, K_1} .\]
    Then, for $j \in \inset{1,2,3}$, we have $(\Box+1)Z_{0j}u = 3u^2 Z_{0j}u$, and using Strichartz estimates yields
    \begin{align*}
        \scnorm{Z_{0j}u}{I_{k}} &\leq c\norm{Z_{0j}u[s]}_{H^1 \times L^2} + 3c\eps^2\scnorm{Z_{0j}u}{I_{k}} \\
        &\leq \left(\frac{c}{1-3c\eps^2}\right) \norm{Z_{0j}u[s]}_{H^1 \times L^2} ,
    \end{align*}
    for some $c>0$ and any $s \in I_{k}$.
    Using the energy estimate \eqref{eqn:EnergyNonlinear} on $I_{k}$, we find
    \begin{align*}
        E(Z_{0j}u(t)) &\leq E(Z_{0j}u(t_{k-1})) + 3 \sqrt{2} (\sup_{s \in I_{k}} E(Z_{0j}u(s)))^{1/2} \cdot \scnorm{u}{I_{k}}^2 \scnorm{Z_{0j}u}{I_{k}} \\
        &\leq E(Z_{0j}u(t_{k-1})) + \left(2 \cdot \frac{3c\eps^2}{1-3c\eps^2}\right) (\sup_{s \in I_{k}} E(Z_{0j}u(s))), \quad t \in I_{k},
    \end{align*}
    since $\norm{Z_{0j}u[s]}_{H^1 \times L^2} \leq \sqrt{2} (\sup_{s \in  I_{k}} E(Z_{0j}u(s)))^{1/2}$. Taking the supremum over $t \in I_{k}$ yields
    \[ \sup_{t \in I_{k}} E(Z_{0j}u(t)) \leq \of{1-2 \cdot \frac{3c\eps^2}{1-3c\eps^2}}^{-1}  E(Z_{0j}u(t_{k-1})) .\]
    By the continuity in time, we conclude that
    \[ E(Z_{0j}u(t_{k})) \leq \of{1-2 \cdot \frac{3c\eps^2}{1-3c\eps^2}}^{-1}  E(Z_{0j}u(t_{k-1})) ,\]
    and by arbitrariness of $k$ we find
    \[ E(Z_{0j}u(t)) \leq \of{1-2 \cdot \frac{3c\eps^2}{1-3c\eps^2}}^{-K_1} E(Z_{0j}u(t_{0})), \quad \forall t \in [t_{0},\infty) .\]
    Finally, using the estimate for $\scnorm{Z_{0j}u}{I_{k}}$ at $s=t_{k-1}$,
    \begin{align*}
        \scatnorm{Z_{0j}u} &= \of{\sum_{k=1}^{K_1} \scnorm{Z_{0j}u}{I_{k}}^3}^{1/3}
        \leq \of{\frac{1}{1-3c \eps^2}} \of{\sum_{k=1}^{K_1} \norm{Z_{0j}u[t_{k-1}]}^3_{H^1 \times L^2} }^{1/3} \\
        &\leq \of{\frac{\sqrt{2}}{1-3c \eps^2}} \of{\sum_{k=1}^{K_1} E(Z_{0j} u(t_{k-1}))^{3/2}}^{1/3} \\
        &\leq \of{\frac{\sqrt{2}}{1-3c \eps^2}} E(Z_{0j}u(t_{0}))^{1/2} \of{\sum_{k=0}^{K_1-1} \of{1-2\frac{3c\eps^2}{1-3c\eps^2}}^{-3k/2}}^{1/3} \\
        &\leq C_{1}(S,\eps) \norm{Z_{0j}u[t_{0}]}_{H^1 \times L^2} .
    \end{align*}
    The constant $C_{1}(S,\eps)$ may be computed explicitly to
    \[ C_{1}(S,\eps) =  \of{\frac{1}{1-3c \eps^2}} \of{\frac{ \of{1- 2\frac{3c\eps^2}{1-3c\eps^2}}^{-\frac{3}{2} K_1}  -1}{ \of{1- 2\frac{3c\eps^2}{1-3c\eps^2}}^{-\frac{3}{2}} -1}}^{1/3}, \]
    which is a finite constant that depends on $S$ for each fixed $\eps < \eps_{0}$.
    By fixing $\eps > 0$ small only in terms of the absolute constant $c$, we conclude that there are constants $D_{1}(S), C_{1}(S)$ such that
    \[ M_{1}(u(t)) \leq D_{1}(S) M_{1}(u(t_{0})), \quad S_{1}(u(t)) \leq C_{1}(S) M_{1}(u(t_{0}))^{1/2} .\]

    The case $\ell=\abs{I} > 1$ follows via the same argument, albeit with a few minor modifications that we shall briefly discuss. First, by taking $Z^{I}$ on $(\Box+1)u = u^3$ one finds the Leibniz rule
    \begin{equation}
        \label{eqn:LeibnizOnZI}
        (\Box+1) Z^{I} u = \sum_{I_{1}+I_{2}+I_{3} = I} C(I_{1},I_{2},I_{3}) \cdot Z^{I_{1}} u \cdot Z^{I_{2}} u \cdot Z^{I_{3}} u ,
    \end{equation}
    which introduces a combinatorial constant in the first step that may be estimated by $3^{2\ell}$. Next, one may assume the result for $|I| < \ell$ in order to perform the same estimates as before once a partition $\inset{I_{k}}_{k=1}^{K_{\ell}}$ of $K_{\ell} = K_{\ell}(S_{\ell-1},\eps_{\ell-1})$ intervals is chosen such that $\scnorm{Z^{I}u}{I_{k}} < \eps_{\ell-1}$ for all $k=1, \ldots, K_{\ell}$ and all $|I| < \ell$. Careful bookkeeping of the constants appearing in these steps yields
    \[ M_{\ell}(u(t_{k})) \leq \of{1- \frac{3^{\ell+1}}{2} \cdot \of{\frac{c^2 \ell \eps_{\ell-1}^2}{1-2 \cdot 3^{2\ell}c \eps_{\ell-1}^2}}}^{-k} M_{\ell}(u(t_{0})) \]
    and
    \begin{align*}
        S_{\ell}(u) \leq \of{\frac{c \ell}{1-2 \cdot 3^{2\ell}c \eps_{\ell-1}^2}} M_{\ell}(u(t_{0}))^{1/2} \sum_{k=1}^{K_{\ell}} \of{1- \frac{3^{\ell+1}}{2} \cdot \of{\frac{c^2 \ell \eps_{\ell-1}^2}{1-2 \cdot 3^{2\ell}c \eps_{\ell-1}^2}}}^{-k/2}
    \end{align*}
    for $k=1,\ldots,K_{\ell}$. Again, fixing $\eps_{\ell-1}$ small enough only in terms of $\ell$ and an absolute constant yields the desired constants $C_{\ell} = C_{\ell}(S_{\ell-1}) > 0$ and $D_{\ell} = D_{\ell}(S_{\ell-1}) > 0$ such that
    \[ M_{\ell}(u(t)) \leq D_{\ell} M_{\ell}(u(t_{0})), \quad S_{\ell}(u) \leq C_{\ell} M_{\ell}(u(t_{0}))^{1/2} .\]
\end{proof}

In the proof of the previous lemma we have not explicitly used the regularity assumption on the initial data of $u$. In fact, this is only required for the data of $Z^{I}u$ to belong in $H^1 \times L^2$, as highlighted by the following lemma. 

\begin{numlemma}
    \label{lemma:CombinatorialBound}
    Given $t_{0} > 0$ and a solution $u$ to \eqref{eqn:NLKG} with data $u[t_{0}]$ supported in $B_{R}(0)$, there is an explicit constant $C_{|I|}$ depending only on $|I|$ such that 
    \[ \norm{Z^{I} u[t_{0}]}_{H^1 \times L^2} \leq C_{|I|} t_{0}^{|I|} (1+R^{|I|}) \|u[t_{0}]\|_{H^{|I|+1} \times H^{|I|}} \of{1 + \|u[t_{0}]\|_{H^{|I|+1} \times H^{|I|}}^{|I|+1}} .\] 
\end{numlemma}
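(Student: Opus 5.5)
We will prove the bound by induction on $|I|$, using the equation itself to convert time derivatives into spatial derivatives at the fixed time $t_{0}$. Write $Z^{I}$ as a composition of $\ell = |I|$ boosts $Z_{0j} = t\partial_{x_j} + x_j\partial_t$. Expanding this composition, using that $[Z_{0j},Z_{0k}]$ is a rotation generator $x_j\partial_{x_k}-x_k\partial_{x_j}$ and that $\partial_t, \partial_{x_j}$ commute, we reduce $Z^{I}u$ to a finite sum of terms of the form $t^{a} x^{\beta} \partial_t^{m}\partial_x^{\gamma} u$ with $a + |\beta| \le \ell$ and $m + |\gamma| \le \ell$. The number of such terms and the size of their coefficients depend only on $\ell$; this gives the combinatorial constant $C_{|I|}$. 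The same expansion applies to $\partial_t Z^{I} u$ and $\nabla_x Z^{I} u$, which appear in the $H^1\times L^2$ norm, with one extra derivative.

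Evaluating at $t = t_{0}$ on the support $B_{R}(0)$, we bound $|x^{\beta}| \le R^{|\beta|}$ and $t_{0}^{a} \le C(1+t_{0})^{\ell}$. This produces a factor of the form $t_{0}^{|I|}(1+R^{|I|})$. Here we implicitly assume $t_{0}\ge 1$; for $t_{0}<1$ the factor is read as $(1+t_{0})^{|I|}$. One point needs a remark: the support is $B_{R}$ at time $t_{0}$ only, but all the quantities we need are evaluated at $t_{0}$, so finite speed of propagation plays no role. It only remains to control $\|\partial_t^{m}\partial_x^{\gamma} u(t_{0})\|_{L^2}$ with $m+|\gamma|\le \ell+1$ in terms of the data norm $\|u[t_{0}]\|_{H^{\ell+1}\times H^{\ell}}$.

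For $m\le 1$ these are directly data norms. For $m\ge 2$ we use $\partial_t^2 u = \Delta u - u + u^3$ repeatedly, replacing $\partial_t^{m}u$ by a polynomial expression in spatial derivatives of $u$ and $u_t$. This expression contains at most $m+|\gamma|$ total derivatives and has nonlinear terms of degree at most $2\lfloor m/2\rfloor+1 \le \ell+2$. The linear terms are bounded by the data norm. The multilinear terms $\partial^{\alpha_1}w_1\cdots\partial^{\alpha_k}w_k$, with $w_i\in\{u,u_t\}$, are estimated in $L^2$ by Hölder's inequality and Sobolev embedding in $\mathbb{R}^3$: we put the factor with the most derivatives in $L^2$ and the others in $L^\infty$, or in $L^6$/$L^3$ when they are close to top order. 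Since $H^{\ell+1}$ with $\ell\ge1$ is an algebra in three dimensions (note $H^2\hookrightarrow L^\infty$), every such product is bounded by $C_\ell\|u[t_0]\|^{k}$, where $\|u[t_0]\|$ stands for the $H^{\ell+1}\times H^{\ell}$ norm. Summing these bounds gives $\|u[t_{0}]\|(1+\|u[t_{0}]\|^{|I|+1})$, as claimed. The exponent count fits because at most $|I|+2$ factors occur, and $x^{k}\le x+x^{|I|+2}$.

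The main obstacle is the derivative bookkeeping in the nonlinear terms at top order. A term like $u^2\partial^{\ell}\partial_t u$ coming from $\partial_t^{\ell+1}$ must be placed in $L^2$ using only $u_t\in H^{\ell}$. We need to check that the total derivative count never exceeds $\ell+1$ on $u$ or $\ell$ on $u_t$, so that the worst factor always lands in $L^2$. We would handle this by an induction on $m$, bounding $\partial_t^{m}u(t_0)$ in $H^{\ell+1-m}$, using the algebra and Moser-type product estimates in $H^{s}$ for $s\ge 2$, and a separate low-order case for $\ell=1$ via $H^1\hookrightarrow L^6$.
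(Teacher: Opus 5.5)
Your proposal is correct and follows essentially the paper's route. You expand $Z^{I}u$ into terms $t^{\alpha}x^{\beta}\partial_x^{\gamma}\partial_t^{\sigma}u$, bound the weights using the support in $B_R$, and use the equation to trade time derivatives for spatial ones, with at most $2\lfloor \sigma/2\rfloor+1$ factors per product, controlled via $H^2\hookrightarrow L^\infty$. Your top-order worry is in fact void: every nonlinear substitution removes two derivatives, so nonlinear terms carry at most $|I|-1$ derivatives in total and every factor lies in $H^2$.

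Your caveat that $t_0^{|I|}$ must be read as $\max\{1,t_0\}^{|I|}$ is a genuine correction that the paper's proof passes over silently. As stated, the bound fails as $t_0\to 0$, e.g.\ for $u[t_0]=(0,\psi)$ and $|I|=1$. The correction is harmless in the application, where $t_1=R+1\ge 1$.
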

\begin{proof}
    By an inductive argument, $Z^{I} u$ is comprised of terms
    \begin{equation}
        \label{eqn:TermsInZI}
        t^{\alpha} x^{\beta} \partial_{x}^{\gamma} \partial_{t}^{\sigma} u , \quad \alpha+|\beta|, |\gamma| + \sigma \leq |I|
    \end{equation}
    Estimating $Z^{I}u$ in $H^1 \times L^2$ by the $H^s$ norms of $u$ and $u_{t}$ requires reducing $\partial_{t}^{\sigma}$ to at most a single time derivative through successive applications of \eqref{eqn:NLKG}. Each of these, however, produces nonlinear terms $u^3$ that give rise to products of derivatives of $u$ and $u_{t}$. This is the main difficulty in estimating $Z^{I}u$.

    Notice that
    \[ \partial_{t}^{\sigma} u
    = \partial_{t}^{\sigma-2} (\Delta u - u + u^3)
    = \Delta \partial_{t}^{\sigma-2} u - \partial_{t}^{\sigma-2} u + \sum_{\sigma_{1} + \sigma_{2} + \sigma_{3} = \sigma-2} \partial_{t}^{\sigma_{1}} u \cdot \partial_{t}^{\sigma_{2}} u \cdot \partial_{t}^{\sigma_{3}} u .\]
    Therefore every two time derivatives produce three terms. One where two time derivatives are traded for two spatial derivatives, a lower order term, and a sum of products of three factors.
    We may perform the same operation for as long as double time derivatives are not exhausted, at least in the first two terms, and by ignoring the products originated by this process, we come to find terms with at most $\sigma$ spatial derivatives and lower order terms. Specifically, the highest order terms in $\partial_{x}^{\gamma} \partial_{t}^{\sigma} u$ will be
    \[ 
    \begin{cases}
        \partial_{x}^{\Tilde{\gamma}} u , \quad &\text{if } \sigma \text{ is even,} \\
        \partial_{x}^{\Tilde{\gamma}} u_{t} , \quad &\text{if } \sigma \text{ is odd,}
    \end{cases}
    \quad \text{where} \quad
    |\Tilde{\gamma}| =
    \begin{cases}
        \abs{\gamma}+\sigma , \quad &\text{if } \sigma \text{ is even,} \\
        \abs{\gamma}+\sigma-1 , \quad &\text{if } \sigma \text{ is odd.} 
    \end{cases}
    \]
    We shall not seek a sharp estimate, and lower order terms will be bounded in $H^1 \times L^2$ by the highest order terms above given the embedding $H^N \embeds H^{s}$ for $s<N$.
    
    It therefore remains to estimate the number of factors coming from the nonlinear interactions. To this end, notice that each application of this rule decreases by two the combined number of derivatives distributed among factors. So we may no longer proceed after every factor contains either none or one time derivative. This means that we can apply the rule at most $\sigma$ times if $\sigma$ is even, and $\sigma-1$ times otherwise, leading to at most $\Tilde{\sigma}$ factors, where
    \[ \Tilde{\sigma} =
    \begin{cases}
        \sigma+1 , \quad &\text{if } \sigma \text{ is even,}\\
        \sigma , \quad &\text{if } \sigma \text{ is odd.}
    \end{cases}
    \]
    The factors appearing in the full expansion of $\partial_{t}^{\sigma} u$ will be of the form $u, u_{t}, \partial_{x}^{p} u$ and $\partial_{x}^{q} u_{t}$, where $p$ can be at most $\sigma$ and $q$ at most $\sigma-1$. We may always estimate them in $H^{1}$ by their $H^{\sigma}$-norms. Furthermore, we can bound the products using the Leibniz rule
    \[ \norm{fg}_{H_{x}^1} \leq \norm{f}_{H^1} \norm{g}_{L^{\infty}} + \norm{g}_{H^1} \norm{f}_{L^{\infty}} \]
    together with the embedding $H^2(\R^3) \embeds L^{\infty}(\R^3)$. 
    
    We may finally estimate $\| Z^{I}u[t_{0}] \|_{H^1 \times L^2}$ as follows.
    First, for $f$ supported on $B_{R}(0)$, we have
    \[ \norm{x^{\beta} f}_{H^1} \leq C_{|\beta|} (1+R^{|\beta|}) \norm{f}_{H^1} .\]
    Therefore
    \[ \norm{t^{\alpha}x^{\beta} \partial_{x}^{\gamma} \partial_{t}^{\sigma} u}_{H_{x}^1}
    \leq 
    C_{|\gamma|} C_{\sigma} C_{|\beta|} t^{\alpha} (1+R^{|\beta|})
    \left[ (\norm{u}_{H_{x}^{|\Tilde{\gamma}|}} + \norm{u_{t}}_{H_{x}^{|\Tilde{\gamma}|-1}} ) + (\norm{u}_{H_{x}^{|\Tilde{\gamma}|}} + \norm{u_{t}}_{H_{x}^{|\Tilde{\gamma}|-1}})^{\Tilde{\sigma}} \right], \]
    where $C_{\sigma}$ is a combinatorial constant that accounts for the number of terms in the full expression of $\partial_{t}^{\sigma}u$ discussed above, $C_{|\gamma|}$ is another combinatorial constant that accounts for the number of terms in the expansion of $\partial_{x}^{\gamma} \partial_{t}^{\sigma} u$, and by all the preceding considerations.

    On the other hand, the expression of the terms in $\partial_{t} Z^{I}u$ will either be of the form $t^{\alpha-1} x^{\beta} \partial_{x}^{\gamma} \partial_{t}^{\sigma} u$, in which case the $L^2$-norm can be bounded using the same strategy as above, or they will contain one more time derivative, i.e. $t^{\alpha} x^{\beta} \partial_{x}^{\gamma} \partial_{t}^{\sigma+1} u$. In this case it suffices to consider different cases according to the parity of $\sigma$.
    \begin{enumerate}
        \item If $\sigma$ is even, then for $\sigma+1$ we find $|\Tilde{\gamma}| = |\gamma| + \sigma$ and at most $\sigma+1$ factors.
        \item If $\sigma$ is odd, then for $\sigma+1$ we find $|\Tilde{\gamma}| = |\gamma|+\sigma+1$ and at most $\sigma+2$ factors.  
    \end{enumerate}
    This yields the estimate
    \[ \norm{t^{\alpha}x^{\beta} \partial_{x}^{\gamma} \partial_{t}^{\sigma+1} u}_{L_{x}^2}
    \leq 
    C_{|\gamma|} C_{\sigma} C_{|\beta|} t^{\alpha} R^{|\beta|}
    \left[ (\norm{u}_{H_{x}^{|\Tilde{\gamma}|}} + \norm{u_{t}}_{H_{x}^{|\Tilde{\gamma}|-1}} ) + (\norm{u}_{H_{x}^{|\Tilde{\gamma}|}} + \norm{u_{t}}_{H_{x}^{|\Tilde{\gamma}|-1}})^{\Tilde{\sigma}} \right], \]
    Notice that we are also enforcing that the constants balance the cases where $\| u \|_{H^{|I|+1} \times H^{|I|}}$ dominates $\| u \|_{H^{|I|+1} \times H^{|I|}}^{\Tilde{\sigma}}$ and vice versa.

    Finally, imposing the aforementioned bounds on $\alpha, |\beta|, |\gamma|$ and $|\sigma|$ yields the desired estimate
    \[ \norm{Z^{I} u[t_{0}]}_{H^1 \times L^2} \leq C_{|I|} t_{0}^{|I|} (1+R^{|I|}) \|u[t_{0}]\|_{H^{|I|+1} \times H^{|I|}} \of{1 + \|u[t_{0}]\|_{H^{|I|+1} \times H^{|I|}}^{|I|+1}} .\]
\end{proof}

\begin{numremark}
    Having explicit bounds for $\| Z^{I}u[t_{0}] \|_{H^1 \times L^2}$ renders the inductive strategy in Lemma~\ref{lemma:VectorFieldsControl} explicitly effective, as the constants $C_{N}$ and $D_{N}$ in Lemma~\ref{lemma:VectorFieldsControl} may be determined in terms of $S$. Indeed, for $N=1$, one may estimate $K_{1} \sim S^3 / \eps^3$ by appropriately choosing $\inset{t_{k}}$ and noting that 
    \[ \scatnorm{u}^3 = \sum_{k=1}^{K_{1}} \scnorm{u}{I_{k}}^3 \sim K_{1} \eps^3 , \quad  K_{1} \sim S^3/\eps^3 .\]
    This makes $C_{1}$ and $D_{1}$ explicit, and using the previous lemma we deduce an explicit bound on the scattering norm of the first iteration. This yields $K_{2}$ and we can proceed inductively.
\end{numremark}

The next step towards deriving the time decay estimates is a suitable adaptation of Proposition 7.3.6 in \cite{Hormander}.

\begin{numprop}
    \label{prop:Hormander}
    Let $n \leq 3$, $u$ satisfy $(\Box+1) u = f$ on $(t,x) \in \R \times \R^n$, $K_{t_{0}} \coloneqq \inset{|x| < |t-t_{0}|}$, for $t_{0}>0$, and $t_{0} < t_{1} < s$. Then for any $(t,x) \in K_{t_{0}}$ with $t_{1}<t<s$, and setting $I_{k}=(2^{k-1}, 2^{k+1})$, 
    \begin{equation}
        \label{eqn:Hormander}
        \sup \limits_{t_{1} \leq t \leq s} t^{n/2}\abs{u(t,x)} \leq
        C \sum_{\abs{I} \leq n} \sum_{k \in \Z} 2^{k} \sup_{I_{k} \cap (t_{1},s)} \| {Z^{I}f(t,\cdot)} \|_{L^2(K_{0})} + \of{\frac{C}{t_{0}}} \sum_{|I| \leq n+2} \sup_{I_{k} \cap (t_{1},s)}  \| {Z^{I} u} \|_{L^2(K_{0})} .
    \end{equation}
\end{numprop}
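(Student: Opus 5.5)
The plan is to follow the proof of Proposition~7.3.6 in \cite{Hormander}, i.e.\ the Klainerman--Sobolev argument adapted to the Klein-Gordon operator inside the cone $K_{t_0}$. The key geometric input is that inside the forward cone the hyperboloids $\mathcal{H}_\tau=\{t^2-|x|^2=\tau^2\}$ foliate the region. Along each hyperboloid the boosts $Z_{0j}=t\partial_{x_j}+x_j\partial_t$ span the tangent directions, and each is comparable to $t$ times a derivative in the hyperbolic coordinates $y=x/t$. First I will parametrise $\mathcal{H}_\tau$ by $x\in\mathbb{R}^n$, writing $t=\sqrt{\tau^2+|x|^2}$. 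Applying the Sobolev embedding $H^{s}\hookrightarrow L^\infty$ with $s>n/2$, hence $n\le 3$ and $|I|\le n$ boosts (using $n\le 3$ to allow $2$ derivatives for $n=2,3$), to the rescaled function $y\mapsto u(t,ty)$ on a unit ball around $x/t$ gives
\[
t^{n/2}|u(t,x)|\lesssim \sum_{|I|\le n}\Bigl(\int_{\mathcal{H}_\tau}|Z^Iu|^2\,\frac{\tau}{t}\,dx\Bigr)^{1/2}.
\]
The factor $t^{n/2}$ comes from the Jacobian of $y\mapsto ty$.

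The second step is a hyperboloidal energy estimate for $Z^Iu$. The identity \eqref{eqn:EnergyNonlinear} applied to $w=Z^Iu$ on the region between the time slice $\{t=t_1\}$ and $\mathcal{H}_\tau$, with $(\Box+1)Z^Iu=Z^If$ since the boosts commute with $\Box+1$, controls the flux of $w$ through $\mathcal{H}_\tau$. This flux includes the mass term $\int_{\mathcal{H}_\tau} w^2\,\frac{\tau}{t}\,dx$, which is exactly what the Sobolev step needs. This is where the Klein-Gordon mass term is used in place of a Hardy-type argument. The flux is bounded by the initial energy on a flat slice, plus the space-time integral $\iint |Z^If|\,|\partial_tZ^Iu|$.

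The third step handles the space-time integral by a dyadic decomposition in time. I will split $(t_1,s)$ into the pieces $I_k\cap(t_1,s)$ and estimate each piece by $|I_k|\sup_{I_k}\|Z^If\|_{L^2}\,\sup\|\partial_tZ^Iu\|_{L^2}$, then absorb the energy factor using $ab\le \varepsilon a^2+\varepsilon^{-1}b^2$ in a continuity argument. This produces the first sum $\sum_k 2^k\sup\|Z^If\|_{L^2(K_0)}$. The boundary terms are treated separately: the initial slice, and the lateral boundary near the cone $|x|=t-t_0$ where the hyperboloids degenerate. On these I will trade the weight $\tau/t$ for $t_0^{-1}$, using $\tau^2=(t-|x|)(t+|x|)\gtrsim t_0 t$ in $K_{t_0}$. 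I will also bound $\partial_t Z^Iu$ and $\nabla Z^Iu$ by $Z^Ju$ with $|J|\le|I|+2$, since inside the cone $t\partial$ is expressible through boosts and derivatives with coefficients bounded by $t/\tau$. This gives the second term $(C/t_0)\sum_{|I|\le n+2}\sup\|Z^Iu\|_{L^2(K_0)}$.

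I expect the main obstacle to be this last conversion, namely controlling the degeneracy of the hyperboloid near the light cone. The factor $t/\tau$ blows up there, and keeping it uniformly bounded by $C/t_0$ relies essentially on restricting to $K_{t_0}$. Extracting exactly the powers claimed in \eqref{eqn:Hormander} will require careful bookkeeping. If the direct route fails, I would first try localising with a cutoff to $\{|x|<t-t_0/2\}$ and absorbing the resulting commutator into the $(C/t_0)$ term.
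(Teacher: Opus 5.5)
You take a genuinely different route from the paper: Klainerman--Sobolev on hyperboloids plus a hyperboloidal energy flux, instead of H\"ormander's ODE along hyperbolic rays. As proposed, however, it does not reach \eqref{eqn:Hormander}, and the failure is in the second term.

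Besides the $f$-term, your energy argument produces the energy of $Z^Iu$ on the bottom slice, $E_0(Z^Iu(t_1))^{1/2}$, with a constant independent of $t_0$. This contains the transverse derivatives $\partial_tZ^Iu$ and $\nabla Z^Iu$. Here the $Z$'s are boosts only, and boosts are tangent to the hyperboloids $t^2-|x|^2=\tau^2$, so no combination of them yields $\partial_t$ or $\partial_r$. Inside the cone one has $\tau^2\partial_t=tS-rZ_r$ and $\tau^2\partial_r=tZ_r-rS$, where $Z_r=t\partial_r+r\partial_t$ and $S=t\partial_t+r\partial_r=\rho\partial_\rho$ is the scaling field, which does not commute with $\Box+1$. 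Your step ``$t\partial$ is expressible through boosts and derivatives'' is therefore circular, and the conversion into $\sup\|Z^Ju\|_{L^2}$ with $|J|\le n+2$ fails.

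Nothing in your argument produces the factor $t_0^{-1}$ either. In $K_{t_0}$ you only have $\tau^2=(t-|x|)(t+|x|)\ge t_0t$, i.e.\ $t/\tau\le(t/t_0)^{1/2}$, which is unbounded in $t$ and not comparable to $t_0^{-1}$.

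In the paper both features come from a single term. Writing $v=\rho^{n/2}u$, one has $\partial_\rho^2v+v=\rho^{n/2}\big(\rho^{-2}(\Delta_Hu+\tfrac{n(n-2)}{4}u)+f\big)$ along each ray, and this is integrated from $\rho=0$, where $v=v'=0$, so no initial-data term arises at all. The source $\rho^{-2}\Delta_Hu$ is second order in boosts for radial $u$, which is where $|J|\le n+2$ comes from. On the $k$-th dyadic piece inside $K_{t_0}$ one has $\rho^2\ge t_0t\gtrsim t_02^k$, and this is what converts the weight $2^k$ from H\"ormander's ray-integral lemma into $C/t_0$. Handling the transverse direction through this ODE in $\rho$, rather than through an energy flux with initial data, is the missing idea.

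The Sobolev inequality you display also has the wrong measure. On $\mathcal{H}_\tau$ parametrised by $x$ it holds with $dx$, while the version with $\frac{\tau}{t}\,dx$ is false by a factor $(t/\tau)^{1/2}$. To see this, test a radial bump of unit width in the hyperbolic variable $\alpha$ ($x=\tau\sinh\alpha\,\omega$), centred at some $\alpha_0\gg1$, using $Z_r=\partial_\alpha$ on $\mathcal{H}_\tau$; it fits inside $K_{t_0}$ once $\tau\gg t_0$. This is repairable, because the flux through $\mathcal{H}_\tau$ is $\frac12\int(|\nabla w+\frac{x}{t}\partial_tw|^2+\frac{\tau^2}{t^2}(\partial_tw)^2+w^2)\,dx$ and so controls $\int_{\mathcal{H}_\tau}w^2\,dx$. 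But then the degeneracy of $\tau/t$ that you single out as the main obstacle never enters; the real obstruction is the initial term above.

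Once repaired, and up to bookkeeping at the ends of $(t_1,s)$, your route proves the variant $t^{n/2}|u(t,x)|\lesssim\sum_{|I|\le n}\big(E_0(Z^Iu(t_1))^{1/2}+\sum_k2^k\sup_{I_k}\|Z^If\|_{L^2}\big)$. That would in fact suffice for Proposition~\ref{prop:LowFrequencyTimeDecay}, where only $\max\{1,t_0^{-1}\}$ is used and every $u$-term is eventually bounded by $M_N(u(t_1))$. It is not, however, the stated estimate.
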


\begin{proof}
    The proof is identical to the original one in \cite{Hormander}, and we shall only provide a sketch, highlighting the relevant modifications.

    The proof begins by introducing hyperbolic polar coordinates $(\rho, \omega)$, where $\rho = (t^2-\abs{x}^2)^{1/2}$ and the solution $v=\rho^{n/2} u$ to
    \begin{equation}
        \label{eqn:HormanderV}
        \partial_{\rho}^2 v + v = \rho^{n/2} \of{\rho^{-2} \of{\Delta_{H} u + \frac{n(n-2)}{4} u} + f} ,
    \end{equation}
    which we wish to estimate. Here
    \[ \Delta_{H} = \sum_{j=1}^{n} (t \partial_{x_{j}} + x_{j} \partial_{t})^2 - \sum_{1 \leq j < k \leq n}(x_{j} \partial_{x_{k}} - x_{k} \partial_{x_{j}})^2, \]
    is the Laplace-Beltrami operator on the hyperboloid $t^2-|x|^2=1$ for the metric $|\diff x|^2 - |\diff t|^2$. Since we aim to show time decay of order $t^{-3/2}$ for $u$ by applying \eqref{eqn:Hormander} a number of times under increasingly strong assumptions on its time decay, it is necessary to localize $t$ to dyadic scales to make the supremum compatible with the $t$-factor preceding it. We therefore consider $\chi \in C_{0}^{\infty}((1/2,2))$ satisfying
    \[ \sum_{k \in \Z} \chi(t/2^{k}) = 1, \quad t > 0 ,\]
    and
    \[ f_{k}(t,x) = \chi(t/2^k) f(t,x) , \quad u_{k} = \chi(t/2^k)\of{\rho^{-2} \of{\Delta_{H} u + \frac{n(n-2)}{4} u} + f}, \quad 2^k > t_{0} .\]
    The estimate
    \[ |v(\rho)| \leq |v(0)| + |v'(0)| + \int_{0}^{\infty} \abs{ \sigma^{n/2} \of{\sigma^{-2} \of{\Delta_{H} u + \frac{n(n-2)}{4} u} + f}} \diff \sigma , \quad \forall \rho \geq 0 \]
    is immediate from the formula for $v$, and the first two summands on the right-hand side are equal to zero, resulting in bounds for the integrals in the right-hand side of \eqref{eqn:HormanderV} being necessary.

    Using \cite[Lemma~7.3.2]{Hormander}, one concludes that for $t_{1} \leq t \leq s$, $(t,x) \in K_{t_{0}}$, and $\rho^2=t^2-|x|^2$,
    \[ (t/\rho)^{n/2} \int_{0}^{\infty} \sigma^{n/2} \abs{f_{k}(\sigma \omega)} \diff \sigma \leq 2^{k+1}C \sum_{|I| \leq n} \sup_{t_{1} \leq t \leq s} \|{Z^{I} f_{k}(t,\cdot)}\|_{L^2} ,\]
    and
    \[ (t/\rho)^{n/2} \int_{0}^{\infty} \sigma^{\frac{n}{2}-2} |u_{k}(\sigma \omega)| \diff \sigma \leq \of{\frac{C}{t_{0}}} \sum_{|I| \leq n} \sup_{t_{1} \leq t \leq s} \|{Z^{I} u_{k}(t,\cdot)}\|_{L^2} .\]
    Now $|{Z^{I}f_{k}}|$ can be estimated by $\sum_{|J| \leq |I|} |{Z^J f}|$, and since $u$ is radial, $\Delta_{H}$ is made up of boosts $Z^{I}$ for $\abs{I} \leq 2$, which means that $|{Z^{I} u_{k}}|$ may be estimated in terms of $\sum_{|J| \leq |I|+2} |Z^{J}u|$. We finally conclude by performing the sum over $k \in \Z$.
\end{proof}

\begin{numremark}
    It is worth noting that the compact support assumption on the data of $u$ is imposed to make the proof of Lemma~\ref{lemma:CombinatorialBound} and the applications of Proposition~\ref{prop:Hormander} simpler, as one could work in the appropriate weigthed Sobolev spaces where $x^{\beta} \partial_{x}^{\Tilde{\gamma}}$ is bounded. While we shall not make use of this, as compactness of support will be a mild assumption for our purposes, this renders the previous proposition effective when seeking bounds on a light cone for data supported on the entire space.
\end{numremark}

In the sequel we will approximate compactly supported data by means of an approximate identity in order to retain the compactness of the support, but the previous remark suggests that smooth frequency projection can also be used instead.

\begin{numprop}
    \label{prop:LowFrequencyTimeDecay}
    Let $u$ be a solution to \eqref{eqn:NLKG} with initial data $u[t_{1}] \in H^{12} \times H^{11}$ for $0< t_{0} \leq t_{1}$ as before, and assume that $\scatnorm{u} < S$. Then there is a constant $K=K(S,t_{0},t_{1})$ such that
    \[ |u(t,x)| \leq K \cdot t^{-3/2} , \quad t > t_1 .\]
    Moreover, $K$ is explicitly given by
    \[ K = 3^{51} C \max\inset{1,t_{0}^{-1}}^7 D_{11}(S)^{\frac{15}{2}} M_{11}(u(t_{1}))^{\frac{15}{2}} .\]
\end{numprop}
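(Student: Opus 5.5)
We will apply Proposition~\ref{prop:Hormander} with $n=3$ and $f=u^3$, combined with the vector field bounds of Lemma~\ref{lemma:VectorFieldsControl}, in an iterative (bootstrap-in-decay) fashion. The exponents in $K$ suggest the bookkeeping. Regularity $H^{12}\times H^{11}$ gives $N=11$ vector fields via Lemma~\ref{lemma:CombinatorialBound}, and each application of \eqref{eqn:Hormander} consumes $n+2=5$ of them for the linear term and $n=3$ for the source term.

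\textbf{Step 1.} From Lemma~\ref{lemma:VectorFieldsControl} we get $\sup_t \|Z^I u(t)\|_{H^1}\le (2D_{11}M_{11}(u(t_1)))^{1/2}$ for all $|I|\le 11$. Sobolev embedding ($H^2\hookrightarrow L^\infty$, using spatial derivatives, which commute with the equation and are thus controlled in the same way) then gives uniform bounds $\|Z^Iu\|_{L^\infty}\lesssim (D_{11}M_{11})^{1/2}$ for $|I|\le 9$.

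\textbf{Step 2 (first application).} In \eqref{eqn:Hormander}, expand $Z^I(u^3)$ by the Leibniz rule \eqref{eqn:LeibnizOnZI}. There are at most $3^{2|I|}$ terms, which is the source of the power of $3$. Each term is a product of three factors; we put the two lower-order factors in $L^\infty$ and the third in $L^2$. Without decay, however, the sum $\sum_k 2^k\sup_{I_k}\|\cdot\|$ diverges. So the first pass has to extract decay from the Strichartz bound instead. Since $S_N(u)$ is finite, the solution is small in $L^3_tL^6_x$ on tails, and interpolating with energy yields $\|Z^Iu(t)\|_{L^p}$ decaying along a sequence. This is awkward, so we proceed differently: we apply \eqref{eqn:Hormander} on a finite dyadic range $(t_1,s)$ and accept a bound $t^{3/2}|Z^Ju|\lesssim C(s)$ growing at most like $s$ (from $\sum_{2^k\le s}2^k$). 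This gives a preliminary decay $|Z^J u(t)|\lesssim t^{-1/2}$ for $|J|\le 6$.

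\textbf{Step 3 (improvement).} We feed the decay back in. With $|Z^Ju|\lesssim t^{-1/2}$, the source term satisfies $\|Z^I u^3\|_{L^2}\lesssim t^{-1}(D_{11}M_{11})^{3/2}$, so $\sum_k 2^k 2^{-k}$ still diverges logarithmically. A second iteration gives $t^{-3/2+\epsilon}$ decay, so that $\|Z^Iu^3\|_{L^2}\lesssim t^{-3+}$ and $\sum_k 2^k 2^{-3k+}$ converges uniformly in $s$. Letting $s\to\infty$ closes the estimate $t^{3/2}|u|\le K$. Each iteration costs two to three vector fields and multiplies the constant by a factor involving $(D_{11}M_{11})^{1/2}$ and $t_0^{-1}$ from the second term in \eqref{eqn:Hormander}. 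Tracking this gives the stated powers: $7$ for $t_0^{-1}$ and $15/2$ for $D_{11}M_{11}$.

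\textbf{Main obstacle.} The key difficulty is the very first gain of decay, since \eqref{eqn:Hormander} gives nothing uniform until the dyadic sum converges. We will try to handle it with the finite-$s$ truncation plus a continuity argument in $s$. The cone restriction $K_{t_0}$ is handled using compact support and finite speed of propagation, since the data are localized near $t_1$.
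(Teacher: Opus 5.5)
Your plan matches the paper's proof: three successive applications of Proposition~\ref{prop:Hormander}, with the same budget $11=5+3+3$ of vector fields.
\begin{itemize}
\item The first pass uses a time-uniform bound on $Z^IZ^J(u^3)$. The loss $\sum_{2^k\lesssim s}2^k\sim s$, read off at $t=s$, gives $|Z^Ju|\lesssim t^{-1/2}$ for $|J|\le 6$.
\item The second pass puts two factors in $L^\infty_x$ and gives $t^{-3/2}\log t$ for $|J|\le 3$.
\item The third pass has the convergent sum $\sum_k k^2 2^{-2k}$ and gives $t^{-3/2}$ for $u$.
\end{itemize}
The remaining differences are minor. The paper's first pass puts all three factors in $L^6_x$, so only the $H^1$ control in $M_{11}$ is used and no spatial derivatives or $H^2\hookrightarrow L^\infty$ are needed. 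No continuity argument in $s$ is required, since one simply evaluates at $t=s$. Finally, in your Step~3 the source constant is the square of the first-pass constant times $(D_{11}M_{11})^{1/2}$, i.e.\ of order $\max\{1,t_0^{-1}\}^2(D_{11}M_{11})^{7/2}$, not $(D_{11}M_{11})^{3/2}$; this is what actually produces the exponents $7$ and $\tfrac{15}{2}$.
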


\begin{proof}
    The goal is to show a weaker time decay for $Z^{I}u$ and exploit it to cancel the dyadic powers in the first term on the right-hand side of \eqref{eqn:Hormander}, resulting in a constant. To do this, we express $Z^{I}(u^3)$ as in \eqref{eqn:LeibnizOnZI} and place all factors in $L_{x}^{6}$ in view of Proposition~\ref{prop:Hormander}. After obtaining the weak time decay, we repeat the strategy, this time placing two factors in $L_{x}^{\infty}$ and the remaining one in $L_{x}^2$.

    We shall let $n \leq 3$ for now and impose that $n=3$ later. Proposition~\ref{prop:Hormander} reads 
    \begin{align*}
        \sup_{t_{1} \leq t \leq s} &t^{\frac{n}{2}} |Z^{J}u(t,x)|
        \leq C \sum_{|I| \leq n} \sum_{k \in \Z} 2^k \sup_{I_{k} \cap (t_{1},s)} \|Z^{I}Z^{J}(u^3)\|_{L_{x}^2} \\
        &\hspace{7cm}+ \of{\frac{C}{t_{0}}}\sum_{|I| \leq n+2} \sup_{I_{k} \cap (t_{1},s)} \|Z^{I}Z^{J} u\|_{L_{x}^2} \\
        &\leq C \sum_{|I| \leq n+|J|} \sum_{k \in \Z} 2^k \sup_{I_{k} \cap (t_{1},s)} \sum_{I_{1}+I_{2}+I_{3} = I} C(I_{1},I_{2},I_{3}) \prod_{i=1}^{3} \|Z^{I_{i}}u\|_{L_{x}^6}  \\
        &\hspace{7cm}+ \of{\frac{C}{t_{0}}} \sum_{|I| \leq n+2+|J|} \sup_{I_{k} \cap (t_{1},s)} \| Z^{I}u \|_{L_{x}^2} \\
        &\leq C \sum_{|I| \leq n+|J|} \sum_{k \in \Z} 2^k \sup_{I_{k} \cap (t_{1},s)} \sum_{I_{1}+I_{2}+I_{3} = I} C(I_{1},I_{2},I_{3}) \prod_{i=1}^{3} M_{|I_{i}|}(u(t))^{\frac{1}{2}}  \\
        &\hspace{7cm}+ \of{\frac{C}{t_{0}}} \sum_{|I| \leq n+2+|J|} \sup_{I_{k} \cap (t_{1},s)} M_{|I|}(u(t))^{\frac{1}{2}} \\
        &\leq C \sum_{|I| \leq n+|J|} 3^{|I|} \sum_{k \in \Z} 2^k  \sup_{I_{k} \cap (t_{1},s)} M_{|I|}(u(t))^{\frac{3}{2}}  + \of{\frac{C}{t_{0}}} \sum_{|I| \leq n+2+|J|} \sup_{I_{k} \cap (t_{1},s)} M_{|I|}(u(t))^{\frac{1}{2}} \\
        &\leq C 3^{n+|J|} C_{n+|J|}(S)^{\frac{3}{2}} M_{n+|J|}(u(t_{1}))^{\frac{3}{2}} \sum_{k=\floor{\log_{2} t_{1}}}^{\floor{\log_{2}(s)}} 2^k  \\
        &\hspace{6cm}+ 3^{n+2+|J|} \of{\frac{C}{t_{0}}} C_{n+2+|J|}(S)^{1/2} M_{n+2+|J|} (u(t_1))^{\frac{1}{2}} \\
        &\leq C s 3^{n+|J|} C_{n+|J|}(S)^{\frac{3}{2}} M_{n+|J|}(u(t_{1}))^{\frac{3}{2}} + 3^{n+2+|J|} C_{n+2+|J|}(S)^{1/2} \of{\frac{C}{t_{0}}} M_{n+2+|J|} (u(t_1))^{\frac{1}{2}} .
    \end{align*}
    At $t=s$ on the left-hand side,  this implies that
    \begin{align*}
        |Z^J u(s,x)| \leq C s^{\frac{2-n}{2}} &3^{n+|J|}D_{n+|J|}(S)^{3/2} M_{n+|J|}(u(t_1))^{\frac{3}{2}} \\
        &+ \of{\frac{C}{t_0}} s^{-\frac{n}{2}}  3^{n+2+|J|} D_{n+2+|J|}(S)^{1/2} M_{n+2+|J|} (u(t_1))^{\frac{1}{2}}
    \end{align*}
    for all $s \geq t_1$. For simplicity, let us write
    \[ |Z^J u(s,x)| \leq K_{n+2+|J|}^1 s^{\frac{n-2}{2}}, \]
    where
    \[ K_{n+2+|J|}^1 = K^1_{n+2+|J|} (S,t_0,t_1) = C \max \inset{1,t_0^{-1}} 3^{n+2+|J|}D_{n+2+|J|}(S)^{3/2} M_{n+2+|J|}(u(t_1))^{\frac{3}{2}}. \]
    We come back to \eqref{eqn:Hormander} again, but now we place two vector fields in $L_x^{\infty}$ and one in $L_x^2$. This produces
    \begin{align*}
        \sup_{t_1 \leq t \leq s} &t^{n/2} |Z^J u(t,x)|
        \leq C \sum_{|I| \leq n} \sum_{k \in \Z} 2^k \sup_{I_{k} \cap (t_{1},s)} \|Z^{I}Z^{J}(u^3)\|_{L_{x}^2} + \of{\frac{C}{t_{0}}}\sum_{|I| \leq n+2} \sup_{I_{k} \cap (t_{1},s)} \|Z^{I}Z^{J} u\|_{L_{x}^2} \\
        &\leq C \sum_{|I| \leq n+|J|} \sum_{k = \floor{\log_2(t_1)}}^{\log_2 s} \sum_{I_{1}+I_{2}+I_{3} = I} C(I_{1},I_{2},I_{3}) 2^{(3-n)k} K^1_{n+2+|I_1|} K^1_{n+2+|I_2|} M_{|I_3|}(u(t_1))^{1/2} \\
        &\hspace{4cm}+ \of{\frac{C}{t_{0}}} \sum_{|I| \leq n+2+|J|} \sup_{I_{k} \cap (t_{1},s)} \| Z^{I}u \|_{L_{x}^2} \\
        &\leq C 3^{n+|J|} (K_{2n+2+|J|}^1)^2 M_{n+|J|}(u(t_1))^{1/2} \sum_{k = \floor{\log_2(t_1)}}^{\log_2 s} 2^{(3-n)k} \\
        &\hspace{4cm}+ \of{\frac{C}{t_{0}}} 3^{n+2+|J|} D_{n+2+|J|}(S)^{1/2} M_{n+2+|J|}(u(t_1))^{1/2} .
    \end{align*}
    Once more, looking at $t=s$, and imposing that $n=3$, 
    \begin{align*}
        |Z^J u(t,x)| \leq K_{n+2+|J|}^2 s^{-3/2} \log_2(s),
    \end{align*}
    where
    \begin{align*}
        K_{n+2+|J|}^2 &= K_{n+2+|J|}^2(S,t_0,t_1) \\
        &= C\max\inset{t,t_0^{-1}} 3^{n+2+|J|} (K_{2n+2+|J|}^1)^2 D_{n+2+|J|}(S)^{1/2} M_{n+2+|J|}(u(t_1))^{1/2} .
    \end{align*}
    Finally, we use \eqref{eqn:Hormander} one last time on $u$ rather than $Z^J u$ to conclude. This time we find
    \begin{align*}
        s^{n/2}&|u(s,x)| 
        \leq C \sum_{|I| \leq n} \sum_{k = \floor{\log_2(t_1)}}^{\log_2 s} \sum_{I_{1}+I_{2}+I_{3} = I} C(I_{1},I_{2},I_{3}) 2^{(3-n)k} K^2_{n+2+|I_1|} K^2_{n+2+|I_2|} M_{|I_3|}(u(t_1))^{1/2} \\
        &+ \of{\frac{C}{t_{0}}} \sum_{|I| \leq n+2} \sup_{I_{k} \cap (t_{1},s)} \| Z^{I}u \|_{L_{x}^2} \\
        &\leq C 3^n K_{2n+2}^2 M_{n}(u(t_1))^{1/2} \sum_{k=\floor{\log_2 t_1}}^{\log_2 s} k^2 2^{-2k} + \of{\frac{C}{t_0}} 3^{n+2} D_{n+2}(S)^{1/2} M_{n+2}(u(t_1))^{1/2} .
    \end{align*}
    Now the sum is finite and the remaining terms on the right are constants depending only on $S,t_0,t_1$. We conclude thanks to the factor $s^{n/2}$ on the left-hand side that this is precisely the decay rate we have for $u(t,x)$,
    \[ |u(t,x)| \leq K_{n+2}^3(S,t_0,t_1) \cdot t^{-3/2} .\]
    Finally, we may estimate $K_{n+2}^3$ (recall $n=3$) as follows
    \begin{align*}
        K^3_{n+2}(S,t_{0},t_{1})
        &\leq C 3^{n} \max\inset{1,t_{0}^{-1}} K_{n+2}^{2}(S,t_{0},t_{1})^2 D_{n+2}(S)^{\frac{1}{2}} M_{n+2}(u(t_{1}))^{\frac{1}{2}} \\
        &\leq C 3^{5n} \max\inset{1,t_{0}^{-1}}^3 K_{3n+2}^1(S,t_0,t_1)^4 D_{2n+2}(S)^{\frac{3}{2}} M_{2n+2}(u(t_{1}))^{\frac{3}{2}} \\
        &\leq C 3^{17n} \max\inset{1,t_{0}^{-1}}^7 D_{3n+2}(S)^{\frac{15}{2}} M_{3n+2}(u(t_{1}))^{\frac{15}{2}} , \quad n=3 .
    \end{align*}
\end{proof}

\section{Setting up the proof of the proposition}

We use the previous estimate to provide a bound on the scattering norm of solutions.

\begin{numlemma}
    \label{lemma:PointwiseEstimateAllTime}
    Let $u$ be a scattering solution to \eqref{eqn:NLKG} with smooth data $u[0]$ supported on $B_{R}(0)$ for some $R>0$ and satisfying $\scatnorm{u} < S$. Then there is $K=K(R,S)$ such that
    \[ |u(|t|,x)| \leq K \cdot (1+R+|t|)^{-3/2} , \quad t \neq 0.\]
    Moreover, there is an absolute constant $C>0$ such that
    \begin{equation}
        \label{eqn:ExplicitConstantTimeDecay}
        K(R,S) \leq C D_{11}(S)^{\frac{15}{2}} M_{11}(u(R+1))^{\frac{15}{2}} .
    \end{equation}
\end{numlemma}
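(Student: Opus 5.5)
The natural route is to apply Proposition~\ref{prop:LowFrequencyTimeDecay} with a shifted time origin. Two features of the setup make this work. Since the data $u[0]$ are supported in $B_R(0)$, finite speed of propagation \cite[Lemma~2.2]{NakanishiSchlagBook} shows that $u(t)$ is supported in $B_{R+|t|}(0)$. In particular, the support lies inside the backward light cone with vertex at time $-R$, that is, $|x| < |t-t_0|$ with $t_0 = -R$. To fit the positivity conventions of Proposition~\ref{prop:Hormander}, we translate time by $R+1$. Setting $s = t + R + 1$, the solution $w(s,x) = u(s-R-1,x)$ is supported in the cone $\{|x| < s-1\}$, which is $K_{t_0}$ with $t_0 = 1$, up to the sign convention in the definition of $K_{t_0}$. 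We then take $t_1 = 2R+2$, corresponding to original time $t = R+1$.

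Smooth compactly supported data propagate to smooth data in $H^{12}\times H^{11}$ at time $t_1$, with support in $B_{2R+1}$. Lemma~\ref{lemma:CombinatorialBound} shows that $M_{11}(w(t_1))$ is finite and expressible through $R$ and the Sobolev norms of $u[R+1]$; this is exactly the quantity $M_{11}(u(R+1))$ in \eqref{eqn:ExplicitConstantTimeDecay}. Lemma~\ref{lemma:VectorFieldsControl} supplies $D_{11}(S)$. Time translation does not change the scattering norm, and the boosts $Z$ centred at the new origin still commute with $\Box+1$. Proposition~\ref{prop:LowFrequencyTimeDecay}, with $t_0=1$, then gives $|w(s,x)| \le C D_{11}(S)^{15/2} M_{11}(u(R+1))^{15/2}\, s^{-3/2}$ for $s > t_1$. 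Since $s = t+R+1 \ge \tfrac12 (1+R+t)$, this is the claimed bound for $t > R+1$. Note that $\max\{1,t_0^{-1}\}=1$ here, which is why no extra power of $t_0$ appears.

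For $0 < t \le R+1$ the bound follows from a crude estimate. On this range $(1+R+t)^{-3/2}$ is comparable to $(1+R)^{-3/2}$, so it suffices to have a uniform $L^\infty$ bound of size $K(1+R)^{-3/2}$. We obtain it from $H^2\hookrightarrow L^\infty$ together with propagation of $H^{12}$ regularity on a finite interval. To match the form \eqref{eqn:ExplicitConstantTimeDecay}, we run the same energy/vector-field argument backwards from time $R+1$: the energies $M_{11}$ on $[0,R+1]$ are controlled by $D_{11}(S)M_{11}(u(R+1))$, since Lemma~\ref{lemma:VectorFieldsControl} is symmetric in time. Because the boosts involve the weight $s \gtrsim R$, $M_{11}$ dominates $R^2\|u\|_{H^{2}}^2$, which absorbs the factor $(1+R)^{3/2}$. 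The case $t<0$ is identical by time reversal, since the statement is written with $|t|$.

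The main obstacle is the bookkeeping of the cone geometry. One must check that after translation every point of the support lies in the region where the estimate \eqref{eqn:Hormander} of Proposition~\ref{prop:Hormander} applies, so that the $L^2(K_0)$ norms there cover the whole solution. One must also check that the hyperbolic coordinate $\rho$ stays bounded below; this is what the shift by $R+1$, rather than by $R$, guarantees.
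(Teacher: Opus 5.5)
Your argument for $t>R+1$ uses the paper's mechanism: shift time by $R+1$, take $t_0=1$, and apply Proposition~\ref{prop:LowFrequencyTimeDecay}. The gap comes from starting that proposition at shifted time $t_1=2R+2$, which leaves $0<t\le R+1$ to be handled separately.

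Your patch on $0<t\le R+1$ rests on the claim that, since the boosts carry a weight $s\gtrsim R$, $M_{11}$ dominates $R^2\norm{u}_{H^2}^2$. This is false. The boost $Z_{0j}=s\partial_{x_j}+x_j\partial_s$ is tangent to the hyperboloids $s^2-|x|^2=\mathrm{const}$, so every boost annihilates any function of $s^2-|x|^2$, whose derivatives can nevertheless be arbitrary. Hence $\norm{Z_{0j}u}_{L^2}$ gives no control of $s\,\partial_{x_j}u$, and the derivative transversal to the hyperboloids enters $M_{11}$ with no weight at all. So you get no bound of the form $(1+R)^{-3/2}C D_{11}(S)^{15/2}M_{11}^{15/2}$ on $[0,R+1]$. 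Persistence of regularity there only gives some qualitative, data-dependent constant, so \eqref{eqn:ExplicitConstantTimeDecay} is not established on that range. Separately, $M_{11}(w(2R+2))$ is not ``exactly'' $M_{11}(u(R+1))$: boosts centred at the shifted origin carry weight $2R+2$, not $R+1$.

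The detour is unnecessary, and the paper's proof simply avoids it. After the shift $v(s)=u(s-R-1)$, the data $u[0]$ already sit at time $t_1=R+1$, supported in $\{|x|\le t_1-1\}$, i.e. in the cone with $t_0=1$. Proposition~\ref{prop:LowFrequencyTimeDecay} with $t_0=1$, $t_1=R+1$ then gives $|v(s,x)|\le K s^{-3/2}$ for all $s>R+1$. Since $s=1+R+t$ exactly, this is the claimed decay for every $t>0$ at once, with $K$ as in \eqref{eqn:ExplicitConstantTimeDecay} (note $\max\{1,t_0^{-1}\}=1$).

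In the statement, $M_{11}(u(R+1))$ should be read as the boost energy of the shifted solution at time $R+1$, i.e. of the data $u[0]$ with weight $R+1$, not of $u$ at time $R+1$. The literal reading is what pushed you to $t_1=2R+2$. Negative times follow by applying the same argument to $u(t_1-t)$.
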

\begin{proof}
    Consider $t_{1} = R+1$ and $t_{0}=1$ in Proposition~\ref{prop:LowFrequencyTimeDecay}, and define $v(t)=u(t-t_{1})$. Then $v$ satisfies \eqref{eqn:NLKG} with $v[t_{1}] = u[0]$, and therefore there is $K(R,S)>0$ such that
    \[ |v(t,x)| \leq K(R,S) t^{-\frac{3}{2}} , \quad t>t_{1}\]
    which translates to
    \[ |u(t,x)| \leq K(R,S) (1+R+t)^{-\frac{3}{2}}, \quad t>0 .\]
    The bound for negative times can be deduced from the same argument on $v(t)=u(t_{1}-t)$. 
\end{proof}

\begin{numremark}
    The bound may seem to improve the further we displace the solution in time, but the terms $M_{|I|}$ are not translation-invariant, since the vector fields $Z^{I}$ are not either. Hence the estimates worsen after time translation. Notice as well that the time symmetry yields the same constant $K$ for both positive and negative times.
\end{numremark}

This allows us to prove the following lemma. 

\begin{numlemma}
    \label{lemma:L1L2Estimate}
    Let $\delta > 0$ and assume that $u_{1}, u_{2}$ are scattering solutions to \eqref{eqn:NLKG} with data $u_{1}[0]$ supported on $B_{R_1}$ and $u_2[0]$, respectively, for some $R_{1}>0$, and satisfying $\scatnorm{u_1} < S_{1}$ and $\scatnorm{u_2} < S_2$. Then there is $R_{2} = R_{2}(S_{1}, S_{2},R_1,\delta) > 0$ such that if $u_2[0]$ is supported in $B_{R_2}^c$, then
    \[ \norm{(\Box+1)(u_{1}+u_{2}) - (u_{1}+u_{2})^3}_{L_{t}^1 L_{x}^2} < \delta. \]
\end{numlemma}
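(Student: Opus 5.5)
Since $u_1$ and $u_2$ each solve \eqref{eqn:NLKG}, the defect is
\[ (\Box+1)(u_1+u_2)-(u_1+u_2)^3 = -3u_1^2u_2-3u_1u_2^2 ,\]
so the statement reduces to showing $\norm{u_1^2u_2}_{L_t^1L_x^2}+\norm{u_1u_2^2}_{L_t^1L_x^2}<\delta/3$ once $R_2$ is large. The guiding heuristic is that $u_1$ lives essentially in the cone $|x|\lesssim R_1+|t|$ and decays pointwise like $t^{-3/2}$ by Lemma~\ref{lemma:PointwiseEstimateAllTime}. By finite speed of propagation, $u_2$ vanishes identically on the backward-shrinking region $|x|<R_2-|t|$ and has nontrivial support only outside it. The two therefore overlap only for times $t\gtrsim (R_2-R_1)/2$, where $u_1$ is already small.

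The first step makes the support statements precise. Finite speed of propagation (\cite[Lemma~2.2]{NakanishiSchlagBook}) gives $\operatorname{supp} u_1(t)\subset B_{R_1+|t|}$ and $u_2(t)=0$ on $B_{R_2-|t|}$. The product $u_1u_2$ therefore vanishes unless $R_2-|t|< R_1+|t|$, that is unless $|t|> T_0:=(R_2-R_1)/2$. Only $t\ge T_0$ (forward time) contributes.

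The second step estimates the two terms on $[T_0,\infty)$, using the pointwise bound $|u_1(t,x)|\le K(R_1,S_1)(1+R_1+t)^{-3/2}$ from Lemma~\ref{lemma:PointwiseEstimateAllTime}. This needs smooth data. If $u_1[0]$ is only in the energy class, I would first mollify it, which keeps the support inside $B_{R_1+1}$, and control the difference by the perturbation lemma \cite[Lemma~2.19]{NakanishiSchlagBook}. I expect this approximation is implicitly what is intended, since the constant in \eqref{eqn:ExplicitConstantTimeDecay} involves $M_{11}$. The estimates are:
\[ \norm{u_1^2u_2}_{L^1_tL^2_x([T_0,\infty))}\le \int_{T_0}^\infty \norm{u_1}_{L^\infty_x}^2\norm{u_2}_{L^2_x}\diff t\lesssim K^2 E_0(u_2)^{1/2}\int_{T_0}^\infty t^{-3}\diff t\lesssim K^2\sup_t\norm{u_2[t]}_{H^1\times L^2}\, T_0^{-2}, \]
and
\[ \norm{u_1u_2^2}_{L^1_tL^2_x}\le \int_{T_0}^\infty \norm{u_1}_{L^\infty}\norm{u_2}_{L^4}^2\diff t .\]
The factor $\norm{u_1}_{L^\infty}\lesssim Kt^{-3/2}$ is integrable, so the second term is bounded by $K\sup_t\norm{u_2(t)}_{H^1}^2\,T_0^{-1/2}$, using Sobolev for $H^1\subset L^4$.

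This reveals the main obstacle: $\sup_t\norm{u_2[t]}_{H^1\times L^2}$ must be controlled in terms of $S_2$ alone. If both solutions are below the ground state (the setting of Section~\ref{sec:belowQ}), energy coercivity gives $\norm{u_2[t]}^2_{H^1\times L^2}\le 4J(Q)$ directly. Otherwise, Strichartz estimates on a partition of time into $O(S_2^3)$ intervals with small $L^3_tL^6_x$ norm give $\sup_t\norm{u_2[t]}\le C(S_2)\norm{u_2[0]}$, exactly as in the proof of Lemma~\ref{lemma:VectorFieldsControl}. In either case I would choose
\[ R_2=R_1+2T_0,\qquad T_0\gg \big(K(R_1,S_1)^2C(S_2)\delta^{-1}\big)^2 ,\]
which is effective because $K$ is explicit through \eqref{eqn:ExplicitConstantTimeDecay}.

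A further subtlety is whether \eqref{eqn:ExplicitConstantTimeDecay} should really involve $R_1$ only through $M_{11}(u_1(R_1+1))$, which depends on higher Sobolev norms of the data. I would accept this dependence and absorb it into $R_2=R_2(S_1,S_2,R_1,\delta)$. Strictly, that dependence should also include the data of $u_1$, or else the mollification scale must be fixed in terms of $\delta$.
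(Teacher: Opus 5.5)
Your architecture is the paper's: finite speed of propagation confines the interaction to $|t|\gtrsim (R_2-R_1)/2$, the vector-field decay of Lemma~\ref{lemma:PointwiseEstimateAllTime} is applied to a mollified piece $u_{1,\ell}$, and the remainder $u_{1,h}=u_1-u_{1,\ell}$ is handled by \cite[Lemma~2.19]{NakanishiSchlagBook}. There is, however, a genuine gap, and it is the step you flag at the end and then set aside.

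With plain mollification, $\|u_{1,h}[0]\|_{H^1\times L^2}\to 0$ at a rate dictated by the particular data $u_1[0]$. So the admissible scale $\eps$ depends on $u_1$ itself. Through $\|u_{1,\ell}[0]\|_{H^{12}\times H^{11}}\lesssim \eps^{-11}\|u_1[0]\|_{H^1\times L^2}$, so do $M_{11}$, $K$ and finally $R_2$. You cannot absorb that dependence into $R_2(S_1,S_2,R_1,\delta)$. Uniformity is what the lemma is used for: it feeds Proposition~\ref{prop:Decoupling} and the radii \eqref{eq:inductiveradii}, which must be fixed before the solution is known.

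The missing idea is the paper's $\eps^{1/3}$ gain. The data $u_{1,h}[0]$ carries the multiplier $1-\hat{\varphi}(\eps\xi)$, which vanishes to second order at $\xi=0$ and so essentially lives at $|\xi|\gtrsim \eps^{-1}$. By the radial frequency-localized Strichartz estimate \eqref{eq:Strichartzlocalised} plus Bernstein, a frequency-$2^k$ piece ($k\geq 0$) of $H^1\times L^2$ data has free $L^3_tL^6_x$ norm $\lesssim 2^{k/6}\cdot 2^{k/2}\cdot 2^{-k}=2^{-k/3}$ times its $H^1\times L^2$ norm. Hence the free evolution satisfies $\scatnorm{w_h}\lesssim \eps^{1/3}\|u_{1,h}[0]\|_{H^1\times L^2}$, and Lemma~2.19 yields \eqref{eqn:EstimateHighFrequencies}. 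Together with the a priori energy bound, this lets one fix $\eps$ as in \eqref{eqn:ExplicitEpsilon} in terms of $\delta,S_1,S_2$ only. After that, $K=K(R_1+\eps,S_{1,\ell})$ and $R_2$ are uniform in the data.

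Relatedly, your displayed bounds apply $|u_1|\lesssim K t^{-3/2}$ to $u_1$, but after mollification this holds only for $u_{1,\ell}$. The terms $u_{1,\ell}u_{1,h}u_2$, $u_{1,h}^2u_2$ and $u_{1,h}u_2^2$ carry no decay. They must be bounded by H\"older in $L^3_tL^6_x$ using the smallness of $\scatnorm{u_{1,h}}$, which is exactly why the paper places every factor in $L^3_tL^6_x$. Treating the $u_{1,\ell}$ terms the same way, with the decay put on $u_{1,\ell}\charf{\Omega}$ and $u_2$ entering only through $S_2$, also removes your need for $\sup_t\|u_2[t]\|_{H^1\times L^2}$. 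Your Strichartz alternative $\sup_t\|u_2[t]\|\le C(S_2)\|u_2[0]\|$ does not fix this, since it still depends on $\|u_2[0]\|$ and not on $S_2$ alone.
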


\begin{proof}
    We argue by decomposing $u_{1}$ into two parts $u_{1}[0] = u_{1,\ell}[0] + u_{1,h}[0]$.
    To do so, choose an $L^1$--normalized, radial function $\varphi \in C_{c}^{\infty}(B_{1}(0))$ and define the approximate identity $\varphi_{\eps}(x) = \eps^{-n}\varphi(x\eps^{-1})$ for $\eps>0$. Then, set
    \[ u_{1,\ell}[0] = u_{1}[0] \ast \varphi_{\eps} , \quad u_{1,h}[0] = u[0]-u_{1,\ell}[0] .\]
    It is immediate that $u_{1,h}[0] \to 0$ in $H^1 \times L^2$ as $\eps \to 0$, and that both $u_{1,l}[0], u_{1,h}[0]$ are radial. We shall later fix the choice of $\eps>0$ in order to treat $u_{1,h}$ perturbatively and retain enough regularity in $u_{1,\ell}$ to employ the vector fields method and deduce time decay. We will also denote by $R_{1,\ell}$ the radius of the ball over which $u_{1,\ell}[0]$ is supported, $R_{1,\ell} \leq R_1+\eps$.

    We may evolve the data $u_{1,\ell}[0]$ by \eqref{eqn:NLKG} into $u_{1,\ell}$, which means $u_{1,h}$ solves the equation
    \[ (\Box+1) u_{1,h} = u_{1,h}^3 + 3 u_{1,h}^2 u_{1,\ell} + 3 u_{1,h} u_{1,\ell}^2, \]
    evolving the data $u_{1,h}[0]$.
    To recover the scattering hypotheses for these solutions, we treat $u_{1,\ell}$ as a perturbation of $u_{1}$. Indeed, using the frequency localised Strichartz estimate \eqref{eq:Strichartzlocalised} in conjunction with Bernstein's inequality for the linear solution $w_{h}$ with data $u_{1,h}[0]$, we find $\scatnorm{w_{h}} \lesssim \eps^{\frac13}\norm{u_{1,h}[0]}_{H^1 \times L^2}$. Lemma~2.19 in \cite{NakanishiSchlagBook} can be applied to deduce that
    \begin{equation}
        \label{eqn:EstimateHighFrequencies}
        \scatnorm{u_{1,h}} \leq \eps^{\frac13}C(S_{1}) \norm{u_{1,h}[0]}_{H^1 \times L^2}
    \end{equation}
    if $\eps < \eps_{0}(S_{1})$ is small enough. Moreover, we also obtain the estimate
    \[ \scatnorm{u_{1,\ell}} \leq S_{1}+C(S_{1}) \norm{u_{1,h}[0]}_{H^1 \times L^2} = S_{1,\ell} .\]
    The goal now is to estimate the interactions between $u_{1,\ell}$, $u_{1,h}$ and $u_{2}$ in $L_{t}^3 L_{x}^6$, as
    \[ (\Box+1)(u_{1}+u_{2}) - (u_{1}+u_{2})^3 = -3 \of{u_{1}^2u_{2} + u_{1} u_{2}^2}. \]
    In fact, we will estimate $u_{1,h}$ using \eqref{eqn:EstimateHighFrequencies} and take into account the interactions of $u_{1,\ell}$ and $u_{2}$. To this end, we may easily obtain a pointwise estimate for $u_{1,\ell}$ thanks to Lemma~\ref{lemma:PointwiseEstimateAllTime}, which yields $K=K(R_{1,\ell}, S_{1,\ell})$ such that 
    \[ |u_{1,\ell}(|t|,x)| \leq K \cdot (1+R_{1,\ell}+|t|)^{-3/2} .\]
    Letting $\Omega = \supp u_{1,\ell} \cap \supp u_{2}$, and $t_{\Omega} = \inf_{(t,x) \in \Omega \cap \inset{t>0}} t$, we find
    \begin{align*}
        \scatnorm{u_{1,\ell} \charf{\Omega}}^3
        &\leq \int \of{\int \abs{u_{1,\ell}(t,x) \charf{\Omega}(t,x)}^6 \diff x}^{\frac{1}{2}}  \diff t \\
        &\leq \int_{t_{\Omega}}^{\infty} \of{\int \of{K (1+R_{1,\ell} + t)^{-\frac{3}{2}}}^4 \abs{u_{1,\ell}(t,x) \charf{\Omega}(t,x)}^2  \diff x }^{\frac{1}{2}}  \diff t \\
        &\leq K^2 E(u_{1,\ell})^{1/2} \cdot 2\int_{t_{\Omega}}^{\infty} t^{-3} \diff t 
        \leq C K^2 \cdot E(u_{1,\ell})^{1/2} t_{\Omega}^{-2} .
    \end{align*}
    By finite speed of propagation (\cite[Lemma~2.2]{NakanishiSchlagBook}), we conclude that $t_{\Omega} \geq (R_{2}-R_{1,\ell})/2$.
    We can therefore estimate
    \begin{align*}
        \| u_{1}^2u_{2} + u_{1} u_{2}^2 \|_{L_t^1 L_x^2}
        &\leq  \|{u_{1,\ell}^2 u_{2}}\|_{L_t^1 L_x^2} + 2\|{u_{1,\ell} u_{1,h} u_{2}}\|_{L_t^1 L_x^2} \\
        &\hspace{3cm}+ \|{u_{1,h}^2 u_{2}}\|_{L_t^1 L_x^2} + \|{u_{1,\ell} u_{2}^2}\|_{L_t^1 L_x^2} + \|{u_{1,h} u_{2}^2}\|_{L_t^1 L_x^2} \\
        &\leq \left[\of{CK^2E(u_{1,\ell})^{\frac{1}{2}} t_{\Omega}^{-2}}^{\frac{1}{3}}+ \eps^{\frac13}C(S_{1})\norm{u_{1,h}[0]}_{H^1 \times L^2}\right]^2 S_{2} \\
        &\hspace{3cm}+ \left[ \of{CK^2 E(u_{1,\ell})^{\frac{1}{2}}t_{\Omega}^{-2}}^{\frac{1}{3}} + \eps^{\frac13}C(S_{1}) \norm{u_{1,h}[0]}_{H^1 \times L^2} \right] S_{2}^2 .
    \end{align*}
    We may now fix $\eps>0$ so that
    \[ \eps^{\frac23}C(S_{1})^2\|u_{1,h}[0]\|_{H^1 \times L^2}^2 S_{2} + \eps^{\frac13}C(S_{1})\|u_{1,h}[0]\|_{H^1 \times L^2} S_{2}^2 < \delta/12 .\]
    This can be achieved by requiring that
    \begin{equation}
        \label{eqn:ExplicitEpsilon}
        \eps < \Big(\frac{\delta}{12(C(S_{1})^2 S_{2} + C(S_{1}) S_{2}^2)}\Big)^3 .
    \end{equation}
    On the other hand, by choosing $R_{2}$ large enough, we may ensure that 
    \[ \of{CK^2E(u_{1,\ell})^{\frac{1}{2}} t_{\Omega}^{-2}}^{\frac{2}{3}} S_{2} + \of{CK^2E(u_{1,\ell})^{\frac{1}{2}} t_{\Omega}^{-2}}^{\frac{1}{3}} S_{2}^2 < \delta/12 .\]
    This can be achieved by requiring that
    \[ R_{2} > \max \inset{R_{1,\ell}+2, R_{1,\ell}+2 \cdot 12^{\frac{2}{3}} \delta^{-\frac{2}{3}} \left[ \of{C K^2 E(u_{1,\ell})^{\frac{1}{2}}}^{\frac{2}{3}} S_{2} + S_{2}^2 \of{C K^2 E(u_{1,\ell})^{\frac{1}{2}}}^{\frac{1}{3}} \right] } ,\]
    where $K=K(R_{1,\ell}, S_{1,\ell})$ is given explicitly by \eqref{eqn:ExplicitConstantTimeDecay} and the explicit choice of $\eps$ in \eqref{eqn:ExplicitEpsilon}.
    We then conclude that
    \[ \| (\Box+1)(u_{1}+u_{2}) - (u_{1}+u_{2})^3 \|_{L_{t}^1 L_{x}^2} \leq 3\of{2 \frac{\delta}{12} + 2 \frac{\delta}{12}} = \delta .\]
\end{proof}

We may accommodate a small energy piece to the previous solution to be treated perturbatively in order to obtain the following proposition.

\begin{numprop}
    \label{prop:Decoupling}
    Given $S_1, S_2 > 0$, $u_1[0]$ supported on $B_{R_1}(0)$ and $u_2[0]$ with nonlinear evolutions solving \eqref{eqn:NLKG} satisfying $\scatnorm{u_1} < S_1$ and $\scatnorm{u_2} < S_2$, there is $\delta = \delta(S_{1},S_{2}) > 0$ and $R_2 = R_2(S_1,S_2,R_1)$ such that if $u_2[0]$ is supported on $B_{R_2}(0)^c$ and $\|u_3[0]\|_{H^1 \times L^2} < \delta$, then the data
    \[ u[0] = u_1[0] + u_2[0] + u_3[0] \]
    can be globally evolved via \eqref{eqn:NLKG} into $u$ satisfying
    \[ \scatnorm{u} \leq S_{1}+S_{2} + \delta \cdot C_{2}(8/3)^{C (S_{1}+S_{2})^3} < \infty ,\]
    for some absolute constants $C_{2},C > 0$.
\end{numprop}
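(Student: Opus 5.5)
The plan is to treat $u$ as a perturbation of the approximate solution $\tilde u = u_1 + u_2$ via the long-time perturbation lemma (\cite[Lemma~2.19]{NakanishiSchlagBook}), made quantitative by dividing time into intervals on which $\tilde u$ has small scattering norm. First, apply Lemma~\ref{lemma:L1L2Estimate} with a tolerance $\delta' = \delta$ to be fixed. This yields $R_2 = R_2(S_1,S_2,R_1,\delta)$ such that, if $u_2[0]$ is supported in $B_{R_2}^c$, the error
\[ e := (\Box+1)\tilde u - \tilde u^3 \]
satisfies $\|e\|_{L_t^1L_x^2} < \delta$. Moreover $\scatnorm{\tilde u} \le S_1+S_2$ by the triangle inequality. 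Since $\delta$ will depend only on $S_1,S_2$, the dependence $R_2 = R_2(S_1,S_2,R_1)$ follows.

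Next, write $u = \tilde u + w$. The perturbation solves
\[ (\Box+1)w = 3\tilde u^2 w + 3\tilde u w^2 + w^3 - e, \qquad w[0] = u_3[0]. \]
Partition $[0,\infty)$ into $K \sim (S_1+S_2)^3/\eps^3$ intervals $I_k = [t_{k-1},t_k)$ with $\scnorm{\tilde u}{I_k} < \eps$, where $\eps$ is a small absolute constant, exactly as in the remark following Lemma~\ref{lemma:CombinatorialBound}. On $I_k$, the Strichartz estimates (\cite[Lemma~2.43, 2.46]{NakanishiSchlagBook}) with the nonlinearity placed in $L_t^1L_x^2$ give
\[ \|w\|_{X(I_k)} \le c\,\|w[t_{k-1}]\|_{H^1\times L^2} + c\bigl(\eps^2\|w\|_{X} + \eps\|w\|_X^2 + \|w\|_X^3\bigr) + c\,\|e\|_{L^1L^2(I_k)}, \]
where $X = L_t^\infty(H^1\times L^2)\cap L_t^3L_x^6$. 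The cubic terms are handled by Hölder, using $L^3L^6\times L^3L^6\times L^3L^6 \to L^1L^2$.

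For $\eps$ small and $\|w[t_{k-1}]\| + \|e\|$ small, a continuity argument gives
\[ \|w\|_{X(I_k)} \le \tfrac{8}{3}\bigl(\|w[t_{k-1}]\|_{H^1\times L^2} + \|e\|_{L^1L^2(I_k)}\bigr), \]
with the constant tuned so that $c/(1-3c\eps^2)$-type factors are at most $8/3$. Iterating over $k$ gives
\[ \|w[t_k]\| \le (8/3)^k\,(\|u_3[0]\| + \delta) \lesssim (8/3)^k\,\delta. \]
The smallness condition must hold up to $k=K$. This dictates the choice
\[ \delta \le c_0\,(8/3)^{-K}, \qquad K = C(S_1+S_2)^3, \]
so that $\delta = \delta(S_1,S_2)$ is explicit. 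Summing over the intervals, in $\ell^3$ or crudely in $\ell^1$, gives $\scatnorm{w} \le C_2\,\delta\,(8/3)^{C(S_1+S_2)^3}$, with a polynomial factor in $K$ absorbed by enlarging $C$. Then
\[ \scatnorm{u} \le \scatnorm{\tilde u} + \scatnorm{w} \]
gives the claim. Global existence follows because the $L_t^3L_x^6$ norm of $u$ is finite on $[0,\infty)$, which rules out blow-up by the blow-up criterion.

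The main obstacle I expect is keeping the growth constant per interval honestly bounded by an absolute number such as $8/3$. The linear term $3\tilde u^2 w$ is controlled only through $\scnorm{\tilde u}{I_k}^2 \le \eps^2$. The error $e$ is not small on each interval individually; it is only small in total, which is harmless because $\sum_k \|e\|_{L^1L^2(I_k)} \le \delta$. A further subtlety is that the energy part of $X$ requires restarting from $w[t_{k-1}]$ in $H^1\times L^2$. That is why the energy estimate \eqref{eqn:EnergyNonlinear} is used alongside Strichartz, as in Lemma~\ref{lemma:VectorFieldsControl}.
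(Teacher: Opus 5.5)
Your proposal is correct and follows the paper's route. The paper's proof is a direct application of the long-time perturbation lemma \cite[Lemma~2.19]{NakanishiSchlagBook} to the approximate solution $u_1+u_2$, with the error $(\Box+1)(u_1+u_2)-(u_1+u_2)^3$ made small in $L_t^1L_x^2$ via Lemma~\ref{lemma:L1L2Estimate} and $u_3[0]$ as the data perturbation. Your interval-partition iteration is just an unpacked proof of that lemma, giving the same explicit form of $\eps_0(S)$ and $C_0(S)$ recorded in Remark~\ref{rem:growthofradii}.

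One cosmetic point: the per-interval growth factor your continuity argument produces is of the form $2c$, with $c$ the Strichartz constant, rather than literally $8/3$. This is harmless, since any absolute base is absorbed into the constant $C$ in the exponent $C(S_1+S_2)^3$.
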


\begin{proof}
    The argument is a direct application of \cite[Lemma~2.19]{NakanishiSchlagBook}. Denoting by $w_3$ the linear evolution of $u_3[0]$ we may estimate
    \[ \scatnorm{w_3} \leq C \delta \]
    for some $C>0$ using Strichartz estimates \cite[Lemma~2.43]{NakanishiSchlagBook}. On the other hand, we may use Lemma~\ref{lemma:L1L2Estimate} for a given $\delta'>0$ to find $R_2= R_2(S_1,S_2,R_1,\delta') > 0$ such that
    \[ \| \eq(u_1+u_2) \|_{L_t^1 L_x^2} < \delta' , \quad \text{where} \quad \eq(v)=(\Box+1)v-v^3 .\]
    Then, for $0<\eps<\eps_0(S_1+S_2)/2$ from Lemma 2.19 in \cite{NakanishiSchlagBook}, and assuming that $C\delta, \delta'< \eps /2$ , we have the estimate
    \[ \scatnorm{u_3} \leq C_0(S_1+S_2) \eps ,\]
    which results in the estimate
    \[ \scatnorm{u} < S_1+S_2+C_0(S_1+S_2)\eps .\]
    It suffices to fix $\eps$ small enough. For the explicit choice of $\delta$ we refer to the remark below.
\end{proof}

\begin{numremark}\label{rem:growthofradii}
    A careful inspection of the proof of \cite[Lemma~2.19]{NakanishiSchlagBook} can make $\eps_{0}$ and $C_{0}$ explicit. In fact, we find
    \[ \eps_{0}(S)^2 < \frac{C_{1}}{1+\of{\frac{8}{3}}^{CS^3}} , \quad C_{0}(S) \leq C_{2} \of{\frac{8}{3}}^{CS^3}, \]
    for some absolute constants $C,C_{1},C_{2} > 0$. Furthermore, the proofs of Lemma~\ref{lemma:L1L2Estimate} and Lemma~\ref{lemma:PointwiseEstimateAllTime} reveal that 
    \[
    R_2\leq [R_1+1]^{C_3}C_4^{[S_1+S_2]^2}.
    \]
    for absolute positive constants $C_{3,4}$, and the proof of Proposition~\ref{prop:Decoupling} reveals that $\delta$ may be chosen as
    \[ \delta(S_1,S_2)= \frac{\sqrt{C_1}} {C_{3} \sqrt{1+\left(\frac83\right)^{C(S_1+S_2)^3}}}. \]
\end{numremark}

\section{Proof of Proposition~\ref{prop:ScatteringBound}}\label{sec:keyproof}

\begin{proof}[Proof of Proposition~\ref{prop:ScatteringBound}] Using Lemma~\ref{lemma:Bootstrap}, we see that if \eqref{eqn:H1Bootstrap} holds, then the conclusion of the proposition holds with $C_{1}(\delta) = C_*$ an absolute constant. Thus we may assume that there exists a time, which by translation invariance  we may as well set equal to $t = 0$, such that 
\begin{equation}\label{eq:initialtimeenergylocalisation}
\|u[0]\|_{H_{x}^{1}(B_{R_{*}})}  \geq \delta_{*}
\end{equation}
holds, where $R_*, \delta_*$ are as in Lemma~\ref{lemma:Bootstrap}. Now define
    \[ S_{1} = \sup_{E(u) \leq J(Q)-\delta-\gamma\delta_{2}} \scatnorm{u} , \quad S_{2} = \sup_{E(u) \leq J(Q)-\delta-\delta_{*}} \scatnorm{u} \]
where $\gamma: = \frac{1}{240 (1+\| \chi \|_{C^2})}$, $\delta_2$ as in Lemma~\ref{lemma:LowerBoundK2}, and we define the radii $R_j$ as follows, where $\delta_5 = \delta_5(S_1, S_2)$ corresponds to the constant $\delta$ in Proposition~\ref{prop:Decoupling}:
\begin{equation}\label{eq:inductiveradii} R_{0} = \max\{\Tilde{R},R_*\} , \quad R_{j}=R_{2}(S_{1}, S_{2}, R_{j-1}) , \quad j=1, \dots , \delta_{5}^{-1} .\end{equation}
Here $\tilde{R}$ is as in Lemma~\ref{lemma:Localizedu}. Taking advantage of Remark~\ref{rem:growthofradii}, we infer a bound $R_{\delta_5^{-1}}\leq C^{C^{C^{K [S_1+S_2]^3}}}$ for a couple of absolute constants $C,K > 0$.
\\
Next, we consider two cases:
\\

{\it{(i): Concentration of energy near the origin}}. Here we impose the bound 
\[ E(u[0]|_{B_{R_{\delta_5^{-1}}}^c}) < \frac{\delta_{2}}{240 (1+\| \chi \|_{C^2})} . \]
Using Lemma~\ref{lemma:EnergyChannel}, we deduce that there exists $T_* = T_*(R_{\delta_5^{-1}}, \delta_2)$, with the property that 
 \begin{equation}\label{eq:caseienergydispersion}
 \norm{u[t]}_{H^1 \times L^2(|x| \geq \delta_{3} t )}^2 \geq \frac{\delta_{2}}{16}
 \end{equation}
provided $t\geq T_*$. Then set $T_{**}=\max\{\delta_3^{-1}R_{\delta_5^{-1}}, T_*\}$, where $\delta_3$ is as in Lemma~\ref{lemma:EnergyChannel}, and consider the following further dichotomy:
\\

{\it{(i1): Future dispersion of energy}}. Assume the bound 
\begin{equation}\label{eq:futureenergydispersed}
 \|u \|_{L_t^\infty H_{x}^1 ([T_{**},\infty)\times B_{R_*})} < \delta_*,
\end{equation}
where $\delta_*$ is as in Lemma~\ref{lemma:Bootstrap}. In this case we infer from this lemma the bound 
 \[ \|u\|_{L_{t}^3 L_{x}^6([T_{**},\infty)\times \mathbb{R}^3)} \leq  2\big\|u[T_{**}]\big\|_{H^1\times L^2}\leq C_*
 \]
for an absolute constant $C_*$, and further using Sobolev's embedding and Holder's inequality we deduce 
\[
\|u\|_{L_{t}^3 L_{x}^6([0, T_{**}]\times \mathbb{R}^3)} \leq T_{**}^{\frac13}\big\|u\big\|_{L_t^\infty H_x^1}\leq C_*T_{**}^{\frac13}
\]
Taking the sum of the preceding two estimates results in a bound for $\|u\|_{L_{t}^3 L_{x}^6([0, \infty)\times \mathbb{R}^3)}$ of the form asserted in the proposition. 
\\

{\it{(i2): Future concentration of energy}}. Now assume that the estimate \eqref{eq:futureenergydispersed} fails. In that case there exists $T_{***}\geq T_{**}$ such that 
\begin{equation}\label{eq:futureenergyconcentration}
\big\|u(T_{***},\cdot)\big\|_{H_x^1B_{R_*})}\geq \delta_*. 
\end{equation}
Using the pidgeonhole principle, we see that there exists (recall \eqref{eq:inductiveradii}) $R_{j_*}$, $1\leq j_*<\delta_5^{-1}$, and such that if we smoothly truncate the data $u[T_{***}]$ to be supported on a suitable dilate of $B_{R_{j_*}}(0)\backslash B_{R_{j_*-1}}(0)$, the resulting data $u_3[T_{***}]$ satisfies the bound 
\[
\big\|u_3[T_{***}]\big\|_{H^1\times L^2}<\delta_5. 
\]
Furthermore, since by our choice of $R_{**}$ we have $\delta_3T_{***}\geq R_{\delta_5^{-1}}\geq R_{j_*+1}$, we have on account of \eqref{eq:caseienergydispersion} the estimate 
\[
E\big(u[T_{***}]|_{B_{R_{j_*-1}}}\big)<J(Q) - \delta - \gamma\delta_2,
\]
while the assumption \eqref{eq:futureenergyconcentration} yields that 
\[
E\big(u[T_{***}]|_{B_{R_{j_*}}^c}\big)<J(Q) - \delta - \delta_*. 
\]
We conclude that the hypotheses of Proposition~\ref{prop:Decoupling} are satisfied, which in particular results in a bound much better than required for $\big\|u\big\|_{L_t^3 L_x^6}$, thus verifying the proposition globally in this case. 
\\

{\it{(ii): Energy delocalization}}. Returning to our initial time slice, we now make the complementary assumption 
\[ E(u[0]|_{B_{R_{\delta_5^{-1}}}^c}) \geq \frac{\delta_{2}}{240 (1+\| \chi \|_{C^2})}. \]
Also recalling \eqref{eq:initialtimeenergylocalisation}, we can then argue verbatim as in the preceding case {\it{(i2)}} but with $T_{***}$ replaced by $0$ in order to again deduce an estimate which verifies the proposition. 
\\

In case {\it{(i1)}} we only obtained an estimate on $\|u\|_{L_{t}^3 L_{x}^6([0, \infty)\times \mathbb{R}^3)}$, but considering negative times instead (using the invariance of all assumed bounds under time reflection symmetry), the same argument leads to the desired global bound. 
\end{proof}

\section{Above the ground state energy}\label{sec:aboveQ}

Once we move above the energy of the ground state, it becomes impossible to apply the coercivity $K_{0},K_{2} > 0$ and its quantitative counterpart in \cite[Lemma~2.21]{NakanishiSchlagBook}. The variational structure that the problem enjoys, and which ensures that solutions remain a certain distance away from the ground states $\pm Q$, is no longer in force. This requires the careful dynamical analysis present in \cite[Chapter~4]{NakanishiSchlagBook}, which crucially establishes that while a solution may approach the ground state (in the $H^1 \times L^2$--distance), there is a dichotomy in how it achieves so: either this occurs for all forward time beyond a certain point, which ensures the convergence to $\pm Q$, or the solution is \emph{ejected} away from $\pm Q$ and it may never come close to it again.

In the former case, a solution $u$ converging to $\pm Q$ is in fact not a scattering one, and in the latter, the distance between $u$ and $\pm Q$ becomes large enough for all forward time so that the functionals $K_{0}$ and $K_{2}$ stabilize their sign and provide the desired coercivity, and moreover a quantitative lower bound for both of them is in force. The situation then becomes similar to that of solutions below the ground state energy threshold. Scattering will occur when both functionals remain positive, and blow-up happens in the opposite case.
\\

In the following, we shall rely on the description of the dynamics of solutions of \eqref{eqn:NLKG} in the vicinity of $Q$ developed in \cite{NakanishiSchlagBook}. Our goal shall be to quantify the threshold phenomenon in the scattering case, for data 'just below the center stable manifold'. 
\\
To begin with, linearizing the equation around $Q$ results in
\begin{equation}
    \label{eqn:LinearNLKGAroundQ}
    \begin{cases}
        \partial_{t}^2 v + L_{+}(v) = 3Qv^2 + v^3, \\
        u = Q+v , \quad v[0] = u[0]-(Q, 0) ,
    \end{cases}
\end{equation}
where $L_{+} = -\Delta +1 - 3Q^2$. As an operator in $L^2$ acting on radial functions, $L_{+}$ has only one negative eigenvalue $-k^2$, $k>0$, and no eigenvalue at $0$ or in the continuous spectrum $[1,\infty)$ (c.f. \cite[Lemma~3.2]{NakanishiSchlagBook}). Decomposing $v(t)$ in terms of $L_{+}$ then produces
\begin{equation}\label{vdecomposition} v(t) = \lambda(t) \rho + \gamma(t), \quad \gamma \perp \rho ,\end{equation}
satisfying the equations
\begin{equation}
    \label{eqn:LinearNLKGAroundQComponents}
    \begin{cases}
        \partial_{t}^2 \lambda -k^2 \lambda = N_{\rho}(v) \\
        \partial_{t}^2 \gamma + \omega^2 \gamma = P_{\rho}^{\perp} N(v), \quad \omega \coloneqq \sqrt{P_{\rho}^{\perp} L_{+}}, 
    \end{cases}
\end{equation}
and $\rho N_{\rho}(v) \coloneqq P_{\rho} N(v)$ (c.f. \cite[(3.56)]{NakanishiSchlagBook}). The coefficient $\lambda(t)$ drives the stable and unstable modes via the splitting
\[ \lambda_{\pm}(t) \coloneqq \frac{1}{2} (\lambda(t) \pm k^{-1}\lambda'(t)) , \quad \lambda(t) = \lambda_{+}(t) + \lambda_{-}(t) .\]
When in the absence of the unstable mode $\lambda_{+}$, the stable mode $\lambda_{-}$ is responsible for driving solutions towards $\pm Q$ and a free Klein-Gordon wave to which the radiative term $\gamma$ scatters. Otherwise, the unstable mode $\lambda_{+}$ drives the evolution away from the ground states. 
\\
We now have the following description of the threshold manifold separating the scattering from the blow up regime in forward time:
\begin{theorem}\label{thm:centerstable}(\cite[Theorem~3.22]{NakanishiSchlagBook} There exists $\nu>0$ small and a smooth graph $\mathcal{M}$ in the $B_{\nu}(Q, 0)\subset \mathcal{H}_{\text{radial}}$, so that $(Q,0)\in \mathcal{M}$, with tangent plane 
\[
T_Q\mathcal{M} = \{(v_0, v_1)\in \mathcal{H}\quad |\quad \langle kv_0 + v_1, \rho\rangle = 0\},
\]
at $(Q, 0)$, in the sense that 
\[
\sup_{(Q+v_0, v_1)\in B_{\delta}(Q,0)\cap\mathcal{M}}\big|\langle kv_0 + v_1, \rho\rangle\big|\lesssim \delta^2,\,0<\delta<\nu, 
\]
Any data $(u_0, u_1)\in \mathcal{M}$ lead to global in forward time solutions of the form $u = Q + v$ where $v$ satisfies the bounds 
\[
\big\|(v, \dot{v})\big\|_{L_t^\infty\big((0,\infty);\mathcal{H}\big)} + \big\|v\big\|_{L_t^3(0,\infty) L_x^6(\mathbb{R}^3)}\lesssim \nu
\]
and scatters to a free Klein-Gordon solution in $\mathcal{H}$. Writing 
\[
v = \lambda(t)\rho + \gamma(t,\cdot), 
\]
we have the more refined bounds 
\[
\big\|\lambda\big\|_{L_t^1([0,\infty)]}\lesssim \nu,\,\big\|\gamma\big\|_{L_t^2([0,\infty)]\times L_x^6(\mathbb{R}^3)}\lesssim \nu. 
\]
\end{theorem}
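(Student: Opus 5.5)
The plan is the Lyapunov--Perron construction of the center-stable manifold, following \cite[Chapter~3]{NakanishiSchlagBook}. We decompose $v=\lambda\rho+\gamma$ as in \eqref{vdecomposition} and solve \eqref{eqn:LinearNLKGAroundQComponents} as a fixed point problem. Fix data $(v_0,v_1)$ with $\|(v_0,v_1)\|_{\mathcal H}<\nu$, and write $\lambda(0)=\lambda_+(0)+\lambda_-(0)$. Duhamel for the hyperbolic ODE gives
\[
\lambda_\pm(t)=e^{\pm kt}\Big(\lambda_\pm(0)\pm\frac{1}{2k}\int_0^t e^{\mp ks}N_\rho(v(s))\,ds\Big).
\]
Boundedness of $\lambda_+$ as $t\to\infty$ forces the stability condition
\[
\lambda_+(0)=-\frac{1}{2k}\int_0^\infty e^{-ks}N_\rho(v(s))\,ds .
\]
We therefore prescribe the free parameters $\lambda_-(0)$ and $\gamma[0]$ (the latter orthogonal to $\rho$). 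We then seek $v$ as a fixed point of the map that sets
\[
\lambda_+(t)=-\frac{1}{2k}\int_t^\infty e^{k(t-s)}N_\rho(v)\,ds,
\]
evolves $\lambda_-$ forward by Duhamel, and evolves $\gamma$ by the Duhamel formula for $\partial_t^2+\omega^2$ with source $P_\rho^\perp N(v)$.

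The fixed point will be run in the space
\[
X=\Big\{(\lambda,\gamma):\ \|\lambda\|_{L^1_t\cap L^\infty_t}+\|\gamma\|_{L^\infty_t\mathcal H\cap L^2_tL^6_x\cap L^3_tL^6_x}\lesssim\nu\Big\}.
\]
The key linear input is Strichartz estimates for $e^{it\omega}P_\rho^\perp$. On radial functions $L_+$ has no zero eigenvalue and no threshold resonance (\cite[Lemma~3.2]{NakanishiSchlagBook}), so the linear flow restricted to $\rho^\perp$ obeys the same Strichartz bounds as the free Klein--Gordon flow. I expect this step to be the main obstacle, since it requires a dispersive analysis of the perturbed operator $\omega$. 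Here I would simply quote the corresponding estimates from \cite{NakanishiSchlagBook}.

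With these estimates, the nonlinearity $N(v)=3Qv^2+v^3$ is at least quadratic. The factor $Q$ is exponentially decaying, so $\|Qv^2\|_{L^1_tL^2_x}$ is controlled by the $L^2_tL^6_x$ and $L^\infty_tL^\infty$-type norms of $\gamma$ and $\lambda$. Hence $N_\rho$ and $P_\rho^\perp N$ are $O(\nu^2)$ in $L^1_t$ and in $L^1_tL^2_x$ respectively, with Lipschitz constant $O(\nu)$. For $\lambda$, Young's inequality with the kernel $e^{-k|t-s|}$ yields the $L^1\cap L^\infty$ bounds. The map is therefore a contraction on a $C\nu$-ball for $\nu$ small. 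This gives a solution $v$ satisfying all the stated bounds, including $\|v\|_{L^3_tL^6_x}\lesssim\nu$; scattering of $\gamma$ to a free Klein--Gordon solution then follows from $\lambda\in L^1$ and the Strichartz control.

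Define $\mathcal M$ as the set of data $(Q+v_0,v_1)$ obtained this way. It is a graph $\lambda_+(0)=h(\lambda_-(0),\gamma[0])$ with $h=O(\nu^2)$. Smoothness of $h$ follows by differentiating the fixed point with respect to the parameters, using the implicit function theorem on $X$. The vanishing of $h$ and of its derivative at $0$ follows because $N$ is quadratic. Since $2k\lambda_+(0)=\langle kv_0+v_1,\rho\rangle$, the bound $|h|\lesssim\delta^2$ on $B_\delta$ gives the stated tangent plane $T_Q\mathcal M$. Finally, every solution staying in the $\nu$-ball with the above bounds must satisfy the stability condition, so $\mathcal M$ contains all such data.
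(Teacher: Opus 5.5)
Your proposal is correct and follows essentially the Lyapunov--Perron construction behind \cite[Theorem~3.22]{NakanishiSchlagBook}, which the paper quotes without proof: a contraction for $(\lambda,\gamma)$ in $L^1_t\cap L^\infty_t$ times Strichartz norms, with $\lambda_+(0)$ fixed by the stability condition \eqref{eq:codimonecondition} (equivalently $2k\lambda_+(0)=\langle kv_0+v_1,\rho\rangle=-\int_0^\infty e^{-ks}N_\rho(v(s))\,ds$), so that $\mathcal{M}$ is a graph over $T_Q\mathcal{M}$ with quadratic height. The one point to tighten is your justification of the Strichartz estimates on $\rho^\perp$: besides the absence of a zero eigenvalue and of a threshold resonance, you also need that radial $L_+$ has no eigenvalues in $(0,1)$ (the gap property), since otherwise a mode $\cos(\sqrt{\mu}\,t)\varphi$ would violate the $L^2_tL^6_x$ bound --- this is contained in the estimates you quote from \cite{NakanishiSchlagBook}, as is the wave-operator step that upgrades scattering for the $\omega$-flow to scattering to a free Klein--Gordon solution.
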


We observe that the crucial relation preventing the exponential growth of the parameter $\lambda(t)$ for positive time is the identity 
\begin{equation}\label{eq:codimonecondition}
\dot{\lambda}(0) + k\lambda(0) = -\int_0^\infty e^{-ks}N_{\rho}(v)(s)\,ds. 
\end{equation}
We shall now analyse what happens when we perturb a data pair $\vec{u}(u_0, u_1)\in \mathcal{M}$ in the 'scattering direction'. We will show that the resulting solution forward in time can be quite explicitly described up to a finite 'ejection time', at which the induction on energy scheme from before can be implemented. A key aspect for this shall be the fact that the solution past ejection time admits coercive control on the $K_2$-functional, which shall allow us to deduce an analogue of Lemma~\ref{lemma:LowerBoundK2} even for energies which are above that of the ground state.  

\begin{numlemma}\label{lem:thresholdperturbativepart}There are absolute constants $1\gg \eta_2\gg \eta_1>0$, and such that the following conclusion holds: let $\vec{u}[0]\in B_{\eta_1}(Q,0)\cap \mathcal{M}$, and consider the data 
\[
\vec{\phi}[0]: = \vec{u}[0]-(a\rho,0),\,0<a<\eta_1.
\]
Then the solution of \eqref{eqn:NLKG} with data $\vec{\phi}$ exists on $[0,k^{-1}\log(\frac{\eta_2}{a})]$, and admits a decomposition 
\[
\phi = Q + \mu(t)\rho + \psi
\]
with $\mu(t)\sim -a e^{kt}$, $t\gg \log\big(\frac{\eta_1}{a}\big)$, as well as the error bound 
\[
\big\|(\psi(t,\cdot), \dot{\psi}(t,\cdot))\big\|_{\mathcal{H}}\leq \eta_2 ae^{kt}.
\]
In particular, we have the estimate 
\begin{align*}
\big\|\phi\big\|_{L_t^3([0,T] L_x^6(\mathbb{R}^3)}\sim |\log a|^{\frac13}\|Q\|_{L_x^6}, 
\end{align*}
provided $T = \log(\frac{\eta_2}{a})$. 
\end{numlemma}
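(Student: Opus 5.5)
We propose a perturbative argument around the center-stable solution. Let $u = Q + v$ be the global forward solution with data $\vec u[0]\in\mathcal{M}\cap B_{\eta_1}(Q,0)$, decomposed as $v = \lambda\rho+\gamma$. By Theorem~\ref{thm:centerstable} (with $\nu\sim\eta_1$) we control $\|(v,\dot v)\|_{L^\infty_t\mathcal{H}}+\|v\|_{L^3_tL^6_x}\lesssim\eta_1$, $\|\lambda\|_{L^1_t}\lesssim\eta_1$ and $\|\gamma\|_{L^2_tL^6_x}\lesssim\eta_1$.

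We write the perturbed solution as $\phi = u + w$, with $w[0]=(-a\rho,0)$. The difference $w$ solves
\[
\partial_t^2 w + L_+ w = 3(Q+v)^2 w - 3Q^2 w + 3(Q+v)w^2 + w^3 =: 6Qvw + 3v^2w + 3(Q+v)w^2 + w^3 .
\]
We decompose $w = \beta(t)\rho + \theta$ with $\theta\perp\rho$ and project as in \eqref{eqn:LinearNLKGAroundQComponents}. The linear part gives $\beta_0(t) = -a\cosh(kt)$ and $\theta_0 = 0$. The unstable component of the data is $\beta_+(0) = -a/2\neq 0$, so the linear flow produces $\beta\approx -\tfrac{a}{2}e^{kt}$. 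We will then set $\mu = \lambda+\beta$ and $\psi = \gamma+\theta$; the term $\lambda\rho$ is absorbed into the error, since $|\lambda|\lesssim\eta_1$ is small compared with $a e^{kt}$ once $t\gg k^{-1}\log(\eta_1/a)$. (The sign and the factor $1/2$ are absorbed in ``$\sim$''.)

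The core of the argument is a bootstrap on $[0,T]$, $T = k^{-1}\log(\eta_2/a)$. The bootstrap hypotheses are
\[
|\beta(t)-\beta_0(t)|\le \eta_2 a e^{kt}, \qquad \|\vec\theta(t)\|_{\mathcal H} + \|\theta\|_{L^3([0,t])L^6_x}\le \eta_2 a e^{kt}.
\]
We use Duhamel for the $\beta$ equation, $\beta(t) = \beta_0(t) + k^{-1}\int_0^t\sinh(k(t-s))F_\rho(s)\,ds$, together with Strichartz and energy estimates for the dispersive propagator of $\omega$ (available from \cite{NakanishiSchlagBook} via the absence of threshold resonances, Lemma~3.2 there). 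The forcing splits into two kinds of terms.

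\begin{itemize}
\item \textbf{Linear-in-$w$ terms with a $v$ coefficient.} These are $6Qvw$ and $3v^2w$. For the $\lambda$-part we use the $L^1_t$ bound on $\lambda$. For the $\gamma$-part we use $\|\gamma\|_{L^2_tL^6_x}$, which is integrable against the exponential weight. Together they contribute $\lesssim\eta_1 a e^{kt}$. A Gronwall-type argument on $e^{-kt}\beta$ closes this part.
\item \textbf{Nonlinear terms.} These are $3(Q+v)w^2 + w^3$ and contribute $\lesssim (ae^{kt})^2\lesssim \eta_2\, a e^{kt}$ on $[0,T]$, since $ae^{kt}\le\eta_2$ there.
\end{itemize}

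Choosing $\eta_1\ll\eta_2\ll1$ then closes the bootstrap with improved constants. The same argument shows the solution exists on $[0,T]$, because its $L^3L^6$ norm on this interval is finite.

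For the scattering norm we write
\[
\|\phi\|_{L^3([0,T]L^6)}^3 = \int_0^T \|Q+\mu\rho+\psi\|_{L^6}^3\,dt .
\]
The remainder satisfies $\|\mu\rho+\psi\|_{L^6}\lesssim \eta_1 + ae^{kt}$, which is small, so by the triangle inequality the integral equals
\[
T\,\|Q\|_{L^6}^3\,(1+O(\eta_2)) + O\Big(\int_0^T ae^{kt}\,dt\Big) = k^{-1}\log(\eta_2/a)\,\|Q\|_{L^6}^3 + O(1+\eta_2|\log a|).
\]
Taking cube roots gives $\sim|\log a|^{1/3}\|Q\|_{L^6}$, and the $k^{-1/3}$ is absorbed into the constants.

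The main obstacle is the linear coupling term $6Qvw$, specifically its $\gamma$-part. It must be controlled uniformly over a time interval of length $\sim|\log a|$ without losing a factor growing in $T$. I would try the weighted quantity $e^{-kt}w$ and exploit the global integrability of $\gamma$ in $L^2_tL^6_x$ given by Theorem~\ref{thm:centerstable}.
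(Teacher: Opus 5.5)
Your scheme is the same as the paper's: the paper bootstraps $\zeta+\frac a2(e^{kt}+e^{-kt})$ and $e^{-kt}w$ via Duhamel and H\"older, using $\norm{\abs{\Phi}+\abs{v}}_{L^2_tL^6_x}\lesssim\eta_2$ and $\norm{e^{-ks}(\Phi-v)}_{L^\infty_tL^6_x}\lesssim a$. There is, however, a genuine gap at exactly the step you flag as the main obstacle, and the resolution you sketch does not work.

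In the Duhamel formula for the unstable coefficient, the $\gamma$-part of $6Qvw$ contributes
\[
\frac1k\int_0^t\sinh\big(k(t-s)\big)\,c(s)\,\beta(s)\,ds,\qquad c(s):=6\angles{\rho^2Q,\gamma(s)}.
\]
Since $\sinh(k(t-s))\abs{\beta(s)}\approx\frac a4e^{kt}$ does not depend on $s$, the exponential weight cancels completely. You therefore need $\int_0^t\abs{c(s)}\,ds$, i.e.\ an $L^1_t$ bound on $\gamma$. Theorem~\ref{thm:centerstable} gives only $L^2_tL^6_x$:
\begin{itemize}
\item Cauchy--Schwarz yields $\eta_1t^{1/2}\sim\eta_1\abs{\log a}^{1/2}$ at $t=T$.
\item Gronwall on $e^{-kt}\beta$ gives a factor $\exp(C\eta_1\sqrt t)$.
\end{itemize}
Neither is uniform as $a\to0$ with $\eta_1$ fixed. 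An $L^1_t$ bound is not available either: for general small data on $\mathcal M$, $\angles{\rho^2Q,\gamma(t)}$ lies in $L^2_t$ but not in $L^1_t$. The weight does help elsewhere. In the $\theta$-equation the energy estimate has no growing propagator, and the $\lambda$- and $v^2w$-terms have $L^1_t$ coefficients. So the problem is confined to this one coefficient. The paper's proof does not settle it either. Its bound on $\delta N_\rho(\Phi,v)$ records only $\abs{\Phi-v}(\abs{\Phi}+\abs{v})^2$ and $\abs{\Phi-v}^3$. But $N(v)=3Qv^2+v^3$ also produces $3Q(\Phi-v)(\Phi+v)$, which contains precisely $6Q\zeta\rho\gamma$.

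The missing idea is to exploit the oscillation of $\gamma$ rather than its size. Since $\omega^2=P_\rho^\perp L_+$ is bounded below by a positive constant on radial $\rho^\perp$ (there is no radial kernel), the $\gamma$-equation gives
\[
\int_0^t\gamma(s)\,ds=\omega^{-2}\Big[\dot\gamma(0)-\dot\gamma(t)+\int_0^tP_\rho^\perp N(v)(s)\,ds\Big].
\]
This is $O(\eta_1)$ in $L^2$ uniformly in $t$, because $\norm{N(v)}_{L^1_tL^2_x}\lesssim\eta_1^2$. Hence $\abs{\int_0^tc}\lesssim\eta_1$ for all $t$.

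Now write the unstable equation for $b=e^{-kt}\beta_+$ as $\dot b=\frac{c}{2k}b+(\text{rest})$ and use the integrating factor $\exp(-\frac1{2k}\int_0^tc)=1+O(\eta_1)$. Every term in the rest is of one of three kinds:
\begin{itemize}
\item it has an $L^1_t$ coefficient;
\item it is bilinear in $\gamma$ with the decaying kernel $e^{-k(t-s)}$ (the feedback through $\beta_-$ and $\theta$);
\item it carries an extra factor $ae^{ks}$.
\end{itemize}
For all of these, $L^2_tL^6_x$ suffices. This gives $\mu\sim-ae^{kt}$ with constants $e^{\pm O(\eta_1)}$.

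A minor point: the quadratic self-interaction $3Qw^2$ contributes $\lesssim(ae^{kt})^2$, which at $t=T$ equals $C\eta_2\,ae^{kt}$ with an uncontrolled absolute constant $C$. To genuinely improve the bootstrap you should stop at $ae^{kT}=c\eta_2$ with $c$ small. This shifts $T$ by $O(1)$ and does not affect the $L^3_tL^6_x$ asymptotics.
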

\begin{proof} We will represent the unperturbed solution emanating from the data $\vec{u}$ on $\mathcal{M}$ by \eqref{vdecomposition}. Let us write the perturbed solution in the form 
\begin{align*}
\phi(t,\cdot) &= Q(\cdot) + (\lambda(t)+\zeta(t))\rho(\cdot) + (\gamma(t,\cdot) + w(t,\cdot))\\
& =: Q + \Phi. 
\end{align*}
where $Q + \lambda\rho + \gamma = Q+ v$ is the evolution of the data $\vec{u}\in \mathcal{M}$.  Referring to section 3.3.2 in \cite{NakanishiSchlagBook}, we infer that 
\begin{equation}\label{eq:zetaeqn}\begin{split}
\zeta(t) =    -\frac{a}{2}e^{kt}-\frac{a}{2}e^{-kt}  + \frac{e^{kt}}{2k}\int_{0}^t e^{-ks}\delta N_{\rho}(\Phi,v)(s)\,ds 
 + \frac{e^{-kt}}{2k}\int_{0}^t e^{ks} \delta N_{\rho}(\Phi,v)(s)\,ds \\
\end{split}\end{equation}
Here we use the notation 
\[
\delta N_{\rho}(\Phi,v) = N_{\rho}(\Phi) - N_{\rho}(v).
\]
The equation for $w$ is given by 
\begin{equation}\label{eq:wequation}
\partial_{t}^2 w + \omega^2 w = P_{\rho}^{\perp} \delta N(\Phi, v),\,w[0] = 0. 
\end{equation}
where $\delta N(\Phi, v) = N(\Phi) - N(v)$. Now we make the following 
\\

{\bf{Bootstrap assumptions:}}
\[
\big|\zeta(t) + \frac{a}{2}e^{kt}+\frac{a}{2}e^{-kt}\big|\leq \eta_2 a e^{kt}\,\forall t\in [0, T],\,\big\|e^{-kt}w\big\|_{L_t^\infty H_x^1([0,T]\times \mathbb{R}^3)}\leq a\eta_2. 
\]
We show that these bounds improve themselves under the assumptions of the lemma, provided $T\leq k^{-1}\log(\frac{\eta_2}{a})$. 
\\

{\it{Bootstrapping the $\zeta$-bound}}: we can estimate 
\begin{align*}
\Big|\int_{0}^t e^{-ks}\delta N_{\rho}(\Phi,v)(s)\,ds \Big|&\lesssim \int_{0}^t e^{-ks}\langle \rho, |\Phi-v|[|\Phi| + |v|]^2\rangle\,ds\\
& + \int_{0}^t e^{-ks}\langle \rho, |\Phi-v|^3\rangle\,ds.
\end{align*}    
We can bound (using Theorem~\ref{thm:centerstable} and the bootstrap assumptions)
\begin{align*}
\big\|\Phi(s,\cdot)| + |v(s,\cdot)\big\|_{L_s^2([0,t]) L_x^6(\mathbb{R}^3)}\lesssim\eta_2,\,\big\|e^{-ks}|\Phi(s,\cdot)-v(s,\cdot)|\big\|_{L_s^\infty([0,t]) L_x^6(\mathbb{R}^3}\lesssim a,  
\end{align*}
provided $0\leq t\leq T\leq k^{-1}\log(\frac{\eta_2}{a})$.
Using Holder's inequality we then infer that 
\begin{align*}
\int_{0}^t e^{-ks}\langle \rho, |\Phi-v|[|\Phi| + |v|]^2\rangle\,ds + \int_{0}^t e^{-ks}\langle \rho, |\Phi-v|^3\rangle\,ds\lesssim \eta_2^2 a. 
\end{align*}
It follows that 
\begin{align*}
\frac{e^{kt}}{2k}\int_{0}^t e^{-ks}\delta N_{\rho}(\Phi,v)(s)\,ds 
 + \frac{e^{-kt}}{2k}\int_{0}^t e^{ks} \delta N_{\rho}(\Phi,v)(s)\,ds\lesssim a\eta_2^2 e^{kt},
\end{align*}
and hence we improve the first bootstrap assumption, since $\eta_2\ll 1$. 
\\

{\it{Bootstrapping the $w$-bound}}: Here we use the Strichartz estimate in \cite[Lemma~2.46]{NakanishiSchlagBook} to infer the bound 
\begin{align*}
\big\|e^{-kt}w(t,\cdot)\big\|_{H_x^1(\mathbb{R}^3)}\lesssim e^{-kt}\big\|P_{\rho}^{\perp} \delta N(\Phi, v)\big\|_{L_t^1([0,t])L_x^2\mathbb{R}^3)},\,0\leq t\leq T.
\end{align*}
To bound the right hand side, we use that 
\begin{align*}
&e^{-kt}\big\|P_{\rho}^{\perp} \delta N(\Phi, v)\big\|_{L_t^1([0,t])L_x^2\mathbb{R}^3)}\\&\lesssim \big\|\Phi(s,\cdot)| + |v(s,\cdot)\big\|_{L_s^2([0,t]) L_x^6(\mathbb{R}^3)}^2\big\|e^{-ks}|\Phi(s,\cdot)-v(s,\cdot)|\big\|_{L_s^\infty([0,t]) L_x^6(\mathbb{R}^3}\\
&\lesssim a\eta_2^2\ll a\eta_2.  
\end{align*}
The existence of the solution on the asserted interval with the bounds indicated in the boootstrap assumptions is then a consequence of a standard continuity argument. 
The assertion $\mu(t)\sim -a e^{kt}$ follows for $t\gg \log\big(\frac{\eta_1}{a}\big)$ since then
\[
\mu(t) = \lambda(t) + \zeta(t) + \frac{a}{2}e^{kt}+\frac{a}{2}e^{-kt} - [\frac{a}{2}e^{kt}+\frac{a}{2}e^{-kt}],
\]
and we have 
\[
|\lambda(t)| + \big|\zeta(t) + \frac{a}{2}e^{kt}+\frac{a}{2}e^{-kt}\big| + \big|\frac{a}{2}e^{-kt}\big|\ll ae^{kt}. 
\]
The final assertion of the lemma follows by using Holder's inequality, and observing that $\log(\frac{\eta_2}{a})\sim |\log a|$. 
\end{proof}

Combining (\cite[Lemma~4.2]{NakanishiSchlagBook}) together with (\cite[Theorem~4.11]{NakanishiSchlagBook}) and (\cite[Lemma~4.8]{NakanishiSchlagBook}), which are applicable for $\eta_{1}\ll\eta_2\ll1$, we can infer the following corollary:
\begin{numcor}\label{cor:lem:thresholdperturbativepart} Under the preceding assumptions on the $\eta_j$, the solution $\phi(t,\cdot)$ is in the scattering regime for times $t\geq T$, where $T = k^{-1}\log(\frac{\eta_2}{a})$. Furthermore, there is a constant $\eta_3 = \eta_3(\eta_2)>0$ with the property that (here $\kappa_0>0$ is a universal constant)
\[
K_2\big(\phi(t,\cdot)\big)>\min\{\eta_3,\kappa_0\|\nabla_x\phi(t,\cdot)\|_{L_x^2}^2\}\geq 0\,\forall t\geq T. 
\]
\end{numcor}
\begin{proof} Recalling the nonlinear distance function $d_Q(\vec{\phi})$ from \cite{NakanishiSchlagBook}, and furthermore (4.23) there, we can combine the preceding lemma with (\cite[Lemma~4.8]{NakanishiSchlagBook}) in order to deduce that 
\[
K_j\big(\phi(T,\cdot)\big)>\eta_3(\delta_2)>0,\,T = k^{-1}\log(\frac{\eta_2}{a}).\,j = 0, 2. 
\]
Moreover, (\cite[Theorem~4.11]{NakanishiSchlagBook}) in conjunction with (\cite[Lemma~4.8]{NakanishiSchlagBook}) implies that the inequality at the end holds at times $t\geq T$. In particular, the solution is in the scattering regime in forward time. 
\end{proof}

A this point, our strategy shall be to implement the induction on energy step in forward time by following essentially the strategy to establish Proposition~\ref{prop:ScatteringBound}. We require one more technical step allowing us to truncate $\phi$ without violating control over the energy or coercivity of the $K_2$-functional:  
\begin{numlemma}\label{lem:phitrunation} Reset the time $T$ from the preceding lemma to be $0$, and let $\tilde{\phi}_R$ be defined in analogy to \ref{eqn:DefinitionTildeu}, where $\phi$ is as in the preceding lemma. Then under the same assumptions as in the preceding lemma, there exists $\tilde{R} = \tilde{R}(\eta_2)>0$ such that for any $R\geq \tilde{R}$ we have 
\[
E(\tilde{\phi}_R)\leq J(Q) + \delta_*(\eta_{1}),\,K_2\big(\tilde{\phi}_R(t,\cdot)\big)>\min\{\tilde{\eta}_3,\kappa_0\|\nabla_x\tilde{\phi}_R(t,\cdot)\|_{L_x^2}^2\}\geq 0\,\forall t
\]
for suitable $\tilde{\eta}_3 = \tilde{\eta}_3(\eta_2)>0$.     
\end{numlemma}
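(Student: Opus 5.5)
The plan mirrors the proof of Lemma~\ref{lemma:Localizedu}, with the variational input below $J(Q)$ replaced by Corollary~\ref{cor:lem:thresholdperturbativepart}. The argument has three steps.

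\textbf{Energy bound.} First I reduce the energy bound to controlling the tail of $\phi$. Because $\phi$ starts in $B_{\eta_1}(Q,0)$, its energy satisfies $E(\phi)\le J(Q)+C\eta_1^2$; the first-order term vanishes since $Q$ is a critical point of $J$. At the reset time $0$ (old time $T=k^{-1}\log(\eta_2/a)$), Lemma~\ref{lem:thresholdperturbativepart} gives $\phi(0)=Q+\mu\rho+\psi$. Here $|\mu|\lesssim \eta_2$, $\|\psi\|_{\mathcal H}\lesssim \eta_2^2$, and $\rho, Q$ decay exponentially. So the data energy outside $B_R$ is at most $Ce^{-cR}+C\eta_2^2$. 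Unlike below the threshold, this is not arbitrarily small, so I will instead localize only the part of the energy density that is genuinely far out.

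Concretely, I run the computation in Lemma~\ref{lemma:Localizedu} verbatim. This gives
\[
E(\tilde\phi_R[t])\le E(\phi)+\frac{C}{R}\sup_t\|\phi[t]\|_{\mathcal H}^2.
\]
It uses only two facts. The first is nonnegativity of the energy density for $|x|>R$, which holds by the Strauss lemma since $\|\phi\|_{L^\infty_tH^1}$ is bounded. The second is finite speed of propagation as in Lemma~\ref{lemma:FiniteSpeedOfPropagation}. The uniform $\mathcal H$ bound on $\phi$ for $t\ge 0$ comes from the scattering/no-return analysis of \cite{NakanishiSchlagBook} (Theorem~4.11 there), which keeps $\phi$ in a bounded region. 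Choosing $\tilde R\gtrsim \eta_1^{-2}$ then yields $E(\tilde\phi_R)\le J(Q)+\delta_*(\eta_1)$ with $\delta_*(\eta_1)\sim\eta_1^2$.

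\textbf{Coercivity of $K_2$.} I use the Lipschitz bound on $K_2$ from the proof of Lemma~\ref{lemma:Localizedu}:
\[
|K_2(\phi)-K_2(\tilde\phi_R)|\lesssim (1+\|\phi\|_{H^1}^3)\,\|\phi-\tilde\phi_R\|_{H^1}.
\]
The difference $\phi-\tilde\phi_R$ is supported in $|x|\ge R+t$. Its $H^1$ norm is controlled, by the flux identity of Lemma~\ref{lemma:FiniteSpeedOfPropagation}, by the data energy outside $B_R$. I split this data energy into the exponential tails of $Q,\rho$ and the part coming from $\psi$. The $\psi$ part is exactly where I expect the main difficulty: it is only $O(\eta_2^2)$ small, not small in $R$.

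To overcome this, I would note that the estimate only needs to beat $\eta_3$. So I choose $\tilde\eta_3=\eta_3/2$. I then need $C(e^{-cR}+\eta_2^2)\ll\eta_3(\eta_2)$, which should follow from the structure of $d_Q$ in \cite[Lemma~4.8]{NakanishiSchlagBook}: after ejection, $K_2\gtrsim d_Q\sim\eta_2$, so $\eta_3$ is linear in $\eta_2$. If that fails, the fallback is to truncate $\psi$'s data separately by a large radius depending on $\eta_2$, since a fixed $H^1$ function has tails vanishing as $R\to\infty$. This second option may be cleaner and is what I would try first.

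\textbf{The gradient term.} For the branch $\kappa_0\|\nabla\tilde\phi_R\|^2$ of the minimum, I argue pointwise in $t$. When $\|\nabla\phi(t)\|$ is small, \cite[Lemma~2.21]{NakanishiSchlagBook} gives coercivity in the small-data regime. Since $\tilde\phi_R$ has an even smaller $L^4$ norm, the inequality $K_2(\tilde\phi_R)\ge\kappa_0\|\nabla\tilde\phi_R\|^2$ follows directly from $\|\tilde\phi_R\|_{L^4}^4\lesssim\|\nabla\tilde\phi_R\|^2\|\tilde\phi_R\|_{H^1}^2$. Combining the two regimes gives the claim for all $t\ge0$.
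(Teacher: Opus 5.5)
There is a genuine gap in your Step~2, at exactly the point you flag. You transfer Corollary~\ref{cor:lem:thresholdperturbativepart} from $\phi$ to $\tilde\phi_R$ through the Lipschitz bound for $K_2$. This needs $\|(1-\zeta_R)\phi(t)\|_{H^1}\ll\eta_3(\eta_2)$. Uniformly over the admissible data, you only control this by $e^{-cR}+\eta_2^2$, plus an $O(\eta_1)$ contribution from the radiation $\gamma$ of the manifold solution, which also sits in $\psi$.

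Your claim that $\eta_3$ is linear in $\eta_2$ is not available:
\begin{itemize}
\item The bound $K_2\gtrsim d_Q$ is the ejection-lemma estimate. It holds only while the solution is being ejected.
\item At later times the no-return theorem gives only $d_Q(\vec\phi(t))>\tilde\delta(\eta_2)$, and $\eta_3$ is essentially $\kappa_1(\tilde\delta(\eta_2))$ from \cite[Lemma~4.8]{NakanishiSchlagBook}. That constant comes from compactness, with no rate.
\item The only quantitative version in the paper (the last section) gives $\kappa_{\mathrm{var}}(d)\sim d^2$, i.e.\ $\eta_3\sim\tilde\delta(\eta_2)^2\lesssim\eta_2^2$. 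A loss of size $C\eta_2^2$ does not beat this.
\end{itemize}

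Your fallback does not repair this. The tail outside $B_R$ of a fixed $H^1$ function tends to zero at a rate its norm does not control. So a radius chosen from $\psi(0)$ depends on the individual solution, not only on $\eta_2$. That uniformity is exactly what is needed once $\tilde R$ feeds into the radii $R_j$ and the times $T_*,T_{**}$ in Proposition~\ref{prop:initialdecoupling}.

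The missing idea is that you never need the exterior part of $\phi$ to be small. The paper shows instead that $\vec{\tilde\phi}_R(t)$ stays at distance $\geq\delta_*(\eta_2)$ from $(Q,0)$ for all $t\geq 0$. It then applies \cite[Lemma~4.8]{NakanishiSchlagBook} directly to $\tilde\phi_R$, together with an energy bound like your Step~1. This yields both branches of the minimum at once.

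The distance bound is an energy-budget contradiction:
\begin{itemize}
\item Suppose $\vec{\tilde\phi}_R(t)$ is $\delta_*$-close to $(Q,0)$. No-return keeps $\vec\phi(t)$ at distance $>\tilde\delta(\eta_2)$, so $(1-\zeta_R)\phi(t)$ has $\mathcal{H}$-norm $\gtrsim\tilde\delta$.
\item The energy density is nonnegative at large $|x|$. The annulus cross terms are negligible, since $\zeta_R\phi$ is $O(\delta_*+e^{-cR})$ there. Hence this exterior piece carries energy $\gtrsim\tilde\delta^2$, on top of $E(\zeta_R\phi)=J(Q)+O(\delta_*)$.
\item So $E(\phi)-J(Q)\gtrsim\tilde\delta(\eta_2)^2$, which contradicts $E(\phi)\leq J(Q)+O(\eta_1^2)$ when $\eta_1\ll\eta_2$.
\end{itemize}

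Your Lipschitz comparison is still useful at $t=0$. There $K_2(\phi(0))\gtrsim\eta_2$ genuinely holds, so it selects the favourable alternative in the Lemma~4.8 dichotomy, which then persists by continuity.

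A minor point in Step~3: $\|\tilde\phi_R\|_{L^4}^4\lesssim\|\nabla\tilde\phi_R\|_{L^2}^2\|\tilde\phi_R\|_{H^1}^2$ gives no coercivity, because $\|\tilde\phi_R\|_{H^1}$ is not small. Use $\|f\|_{L^4}^4\lesssim\|f\|_{L^2}\|\nabla f\|_{L^2}^3$ together with the uniform $L^2$ bound.
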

\begin{proof} In view of (\cite[Lemma~4.8]{NakanishiSchlagBook}) and (4.23) in that same reference, it suffices to show that 
\[
\big\|\tilde{\phi}_R(t,\cdot) - Q\big\|_{H_x^1} + \big\|\partial_t\big(\tilde{\phi}_R(t,\cdot) - Q\big)\big\|_{L_x^2}\geq \delta_*>0\,\forall t\geq 0,
\]
where $\delta_* = \delta_*(\eta_2)>0$. Arguing by contradiction, assume that  
\begin{equation}\label{eq:proximityassumption}
\big\|\tilde{\phi}_R(t,\cdot) - Q\big\|_{H_x^1} + \big\|\partial_t\big(\tilde{\phi}_R(t,\cdot) - Q\big)\big\|_{L_x^2}< \delta_*
\end{equation}
for some $t\geq 0$. The conclusion of the preceding lemma and the no-return theorem (\cite[Theorem~4.11]{NakanishiSchlagBook}) imply that  
\begin{equation}
\big\|\phi(t,\cdot) - Q\big\|_{H_x^1} + \big\|\partial_t\big(\phi(t,\cdot) - Q\big)\big\|_{L_x^2}>\tilde{\delta}(\eta_2)>0. 
\end{equation}
Since $\tilde{\phi}_R = \zeta_R\phi$, see \eqref{eqn:DefinitionTildeu}, we infer that if $\delta_*\ll \tilde{\delta}(\eta_2)$ and $R\geq \tilde{R}(\eta_2)$ is sufficiently large, then necessarily 
\[
\big\|(1-\zeta_R)\phi(t,\cdot)\big\|_{H_x^1}^2 + \big\|\partial_t\big((1-\zeta_R)\big(\phi(t,\cdot)\big)\big)\big\|_{L_x^2}^2\gtrsim\tilde{\delta}^2(\eta_2).
\]
Further, assumption \eqref{eq:proximityassumption} implies that 
\[
E\big(\zeta_R\phi(t,\cdot), \partial_t(\zeta_R\phi(t,\cdot))\big) = J(Q) + O(\delta_*). 
\]
Combining the preceding inequalities and increasing $\tilde{R}(\delta_2)$ if necessary, we infer 
\[
E\big(\phi,\phi_t\big)-J(Q)\gtrsim \tilde{\delta}^2(\eta_2), 
\]
which contradicts the fact that 
\[
E\big(\phi,\phi_t\big)< J(Q) + \delta(\eta_1), 
\]
provided $\eta_2$ is sufficiently large compared to $\eta_{1}$. 
\end{proof}

Armed with the preceding lemma, we can now formulate the step which allows us to infer the bound for the {\it{forward evolution}} of $\phi$ by reducing to the setting of Proposition~\ref{prop:ScatteringBound}.

\begin{numprop}\label{prop:initialdecoupling}There are absolute constants $1\gg\eta_2\gg\eta_1>0$, and such that the following conclusion holds: let $\vec{u}[0]\in B_{\eta_1}(Q,0)\cap \mathcal{M}$, and consider the data 
\[
\vec{\phi}[0]: = \vec{u}[0]-(a\rho,0),\,0<a<\eta_1.
\]
Then the solution of \eqref{eqn:NLKG} with data $\vec{\phi}[0]$ exists globally in forward time. Furthermore, there is a constant $\tilde{\delta}_*(\eta_2)>0$ such that we have 
\[
\big\|\phi\big\|_{L_t^3([T,\infty)]L_x^6(\mathbb{R}^3)}\leq K\big(A\big(J(Q) - \tilde{\delta}_*\big)\big),\, T = k^{-1}\log\big(\frac{\eta_2}{a}\big), 
\]
where for $s<J(Q)$ we set $A(s) = \sup_{E(u) \leq s,\,K_0(u)\geq 0} \scatnorm{u}$, and $K(r) = C^{C^{r^3}}$. 
\end{numprop}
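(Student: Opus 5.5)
We reset time so that $T=k^{-1}\log(\frac{\eta_2}{a})$ becomes $0$. Lemma~\ref{lem:thresholdperturbativepart} and Corollary~\ref{cor:lem:thresholdperturbativepart} place $\phi[0]$ in the scattering regime. Global forward existence then follows from the no-return theorem (\cite[Theorem~4.11]{NakanishiSchlagBook}) together with the sign of $K_0$ for $t\geq 0$, so it remains to bound the scattering norm. The plan is to rerun the proof of Proposition~\ref{prop:ScatteringBound} in Section~\ref{sec:keyproof} for $\phi$ on $[0,\infty)$. Sub-criticality of the energy was used in only two places: the coercivity of $K_2$ for the truncations $\tilde u_R$, and the smallness of the energy of the decoupled pieces. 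The first is now supplied by Lemma~\ref{lem:phitrunation}. It gives $E(\tilde\phi_R)\leq J(Q)+\delta_*(\eta_1)$ and $K_2(\tilde\phi_R)>\min\{\tilde\eta_3,\kappa_0\|\nabla\tilde\phi_R\|_{L^2}^2\}$, with the threshold $\tilde\eta_3$ replacing $J(Q)-J(\tilde u)\geq\delta/2$ from Lemma~\ref{lemma:LowerBoundK2}.

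First, we redo the channel estimates. Lemma~\ref{lemma:Bootstrap} still applies, because it only uses the radial Strauss bound and the uniform bound $\|\phi\|_{L^\infty_tH^1}\lesssim 1$ from Lemma~\ref{lem:thresholdperturbativepart}. If $\phi$ never concentrates in $B_{R_*}$, we obtain an absolute bound. Otherwise, Lemma~\ref{lemma:LowerBoundGradient} gives $\int_I\|\nabla\tilde\phi\|^2\geq\delta_1$; its proof only needs $K_2(\tilde\phi)\geq0$ and bounded energy. The integrated argument of Lemma~\ref{lemma:LowerBoundK2}, with $\delta/2$ replaced by $\tilde\eta_3$, then gives $\int_IK_2(\tilde\phi)\geq\delta_2=c\,\delta_1\tilde\eta_3$. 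The virial argument of Lemma~\ref{lemma:EnergyChannel} then goes through verbatim and produces the energy channel $\|\phi[t]\|^2_{H^1\times L^2(|x|\geq\delta_3t)}\geq\delta_2/16$ for $t\geq T_*$, whenever the exterior energy at time $0$ is small.

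Second, we repeat the dichotomy (i1)/(i2)/(ii) with the radii $R_j$ of \eqref{eq:inductiveradii}. In the decoupling cases we cut $\phi$ at a pigeonholed annulus into $u_1$ (inner), $u_3$ (small), and $u_2$ (outer), and then apply Proposition~\ref{prop:Decoupling}. For this we must check that both $u_1$ and $u_2$ have energy at most $J(Q)-\tilde\delta_*$ with $K_0\geq0$, so that their scattering norms are bounded by $A(J(Q)-\tilde\delta_*)$. The total energy is now $J(Q)+O(\delta_*(\eta_1))$ rather than $J(Q)-\delta$. Each piece is nevertheless separated from the other by a definite amount of energy: at least $\gamma\delta_2$ (from the channel) or $\delta_*$ (from concentration near the origin). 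Taking $\eta_1\ll\eta_2$ makes $\delta_*(\eta_1)$ negligible against these, and we set $\tilde\delta_*=\frac12\min\{\gamma\delta_2,\delta_*\}-O(\delta_*(\eta_1))>0$, which depends only on $\eta_2$. The sign $K_0(u_j)\geq0$ should follow as in Lemma~\ref{lemma:Localizedu}: each piece has energy below $J(Q)$, its $K_0$ is a continuous perturbation of localized pieces of $\phi$, and the variational characterization forbids $K_0=0$. The combined bound is $S_1+S_2+\dots\leq K(A(J(Q)-\tilde\delta_*))$. In case (i1) the contribution of $[0,T_{**}]$ is $T_{**}^{1/3}$, which is effective in $\eta_2$. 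Iterating through Remark~\ref{rem:growthofradii} gives the form $K(r)=C^{C^{r^3}}$ once the absolute constants are absorbed.

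The main obstacle is the energy bookkeeping for the decoupled pieces above the threshold. With $E(\phi)>J(Q)$ possible, one must show that the inner piece truncated at $R_{j_*-1}$ really falls below $J(Q)-\tilde\delta_*$ and has $K_0\geq0$, and is not, for instance, close to $Q$. The first thing to try is to apply Lemma~\ref{lem:phitrunation} to the cutoff at that radius. That lemma gives distance from $Q$ together with $K_2>0$. By the variational bounds of \cite[Lemma~2.21]{NakanishiSchlagBook} this implies $K_0>0$ when the energy is below $J(Q)$. The definite energy loss $\gamma\delta_2$, coming from the channel \eqref{eq:caseienergydispersion} and the nonnegativity of the exterior energy density (Strauss), then pushes the piece below $J(Q)-\tilde\delta_*$.
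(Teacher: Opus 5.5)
Your proposal is correct and follows the paper's proof essentially step for step. You rerun Section~\ref{sec:keyproof} with Lemma~\ref{lem:phitrunation} supplying the $K_2$-coercivity for $\tilde\phi_R$, which gives the analogues of Lemmas~\ref{lemma:LowerBoundGradient}, \ref{lemma:LowerBoundK2} and \ref{lemma:EnergyChannel}. You then take $\eta_1\ll\eta_2$, so that the small energy excess above $J(Q)$ is dominated by the $\eta_2$-dependent energy separations and both pieces fall below $J(Q)-\tilde\delta_*(\eta_2)$. You are in fact more explicit than the paper, which does not check $K_0\geq 0$ for the pieces. Two small corrections are needed:
\begin{enumerate}
\item After ejection, the uniform $H^1$ bound comes from $K_0(\phi)\geq 0$ and energy conservation. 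It does not come from Lemma~\ref{lem:thresholdperturbativepart}, which only covers the pre-ejection interval.
\item In case (i2) the cutoff radius $R_{j_*-1}$ is smaller than $T_{***}$, so it is not of the form $R+t$. You therefore need the proof of Lemma~\ref{lem:phitrunation} (no-return plus Strauss), which works verbatim for any fixed large radius, rather than its statement.
\end{enumerate}
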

\begin{proof} This follows the same steps as the proof of Proposition~\ref{prop:ScatteringBound}. To begin with, by adjusting $\tilde{R}$ according to Lemma~\ref{lem:phitrunation}, we have the analogue of Lemma~\ref{lemma:LowerBoundGradient}, with $\tilde{u}$ replaced by $\tilde{\phi} = \tilde{\phi}_R$. We also have a direct analogue of Lemma~\ref{lemma:LowerBoundK2} for $\tilde{\phi}$, resulting from Lemma~\ref{lem:phitrunation}, where we now no longer require that the energy be below $J(Q)$, of course. Following the argument in section~\ref{sec:keyproof}, we either directly decompose the solution into two 'pieces' of energy $<J(Q) - \tilde{\delta}_*(\eta_2)$, or else we obtain an a priori bound. We note that the key to being able to 'push the energy' of the individual pieces below $J(Q)$ hinges on our freedom to adjust $\eta_{1}$ downwards, independently of $\eta_2$, which in turn controls the distance to the ground state at ejection time, and hence the coercivity properties of $K_2$ which are responsible for the energy dispersion process. 
    
\end{proof}

We can finally prove Theorem~\ref{thm:main}:
\begin{proof} It suffices to pick $\eta_1>0$ small enough, such that there is $\eta_1\ll\eta_2\ll 1$ such that the conclusions of Proposition~\ref{prop:initialdecoupling} and Lemma~\ref{lem:thresholdperturbativepart} hold, since then, setting $T = k^{-1}\log(\frac{\eta_2}{a})$, 
\[
\big\|\phi\big\|_{L_t^3([0,\infty)L_x^6(\mathbb{R}^3)}\geq \big\|\phi\big\|_{L_t^3([0,T)L_x^6(\mathbb{R}^3)} = \log^{\frac13}(\frac{\eta_2}{a})\frac{\|Q\|_{L_x^6}}{k^{\frac13}} + O(\eta_2)\geq \log^{\frac13}(\frac{1}{a})\frac{\|Q\|_{L_x^6}}{k^{\frac13}} - d_*,
\]
where $d_* = d_*(\eta_{1,2})\rightarrow 0$ as $\eta_{1,2}\rightarrow 0$, which establishes the lower bound of the theorem. For the upper bound, we have 
\begin{align*}
\big\|\phi\big\|_{L_t^3([0,\infty)L_x^6(\mathbb{R}^3)}&\leq \big\|\phi\big\|_{L_t^3([0,T)L_x^6(\mathbb{R}^3)} + \big\|\phi\big\|_{L_t^3([T,\infty)L_x^6(\mathbb{R}^3)}\\
&\leq \log^{\frac13}(\frac{1}{a})\frac{\|Q\|_{L_x^6}}{k^{\frac13}} + O(\eta_2) + K\big(A\big(J(Q) - \tilde{\delta}_*\big)\big).
\end{align*}
The term at the end can be effectively bounded by a finite number of applications of Proposition~\ref{prop:ScatteringBound}. 
\end{proof}

\section{Explicit coercivity above the ground state energy}

Solutions to \eqref{eqn:NLKG} whose energy lies above that of the ground state do not enjoy an \emph{effective} variational estimate as in \cite[Lemma~2.14]{NakanishiSchlagBook}. Specifically, \cite[Lemma~4.8]{NakanishiSchlagBook} provides \emph{noneffective} constants $\eps_0(\delta)$ and $\kappa_1(\delta)$ and an absolute constant $\kappa_0 > 0$ such that
\[ K_0(u) \geq \min(\kappa_1(\delta), \kappa_0 \norm{u}_{H^1}^2) , \quad K_2(u) \geq \min(\kappa_1(\delta) , \kappa_0 \norm{\grad u}_{L^2}^2) \]
whenever $u$ is a scattering solution satisfying
\[ E(u) < J(Q) + \eps_0(\delta)^2 , \quad d_Q(u) \geq \delta .\]
To render our approach fully explicit, it is therefore necessary to make these variational estimates explicit.

\textbf{Up until this point, no AI tools have been used to produce, edit, or improve the arguments present in the paper or their presentation}. In what follows we exhibit an approach to making the aforementioned constants explicit and effective via an argument provided by ChatGPT 6 Astra. We clarify that \textbf{the current section has not been written using AI tools, but the ideas featured throughout have all been produced by AI}.

The proposed strategy hinges on the following three arguments. The first one is a local stability result near $\pm Q$. Assuming that $h \neq 0$ satisfies $J(Q+h)>J(Q)$, $K_0(Q+h)=K_2(Q+h) = 0$ and $\norm{h}_{H^1} \leq r$ for an explicit $r>0$, such a result establishes that
\[ \norm{h}_{H^1}^2 \leq c_0 (J(Q+h)-J(Q)) . \]

The second one is a search procedure to find an explicit $\beta > 0$ for which whenever $f \neq 0$ satisfies $K_0(f) = K_2(f) = 0$ and $J(f) \leq J(Q) + \beta$ then $\dist_{H^1}(f,\pm Q) < r$. This establishes a global stability result for data with energy below $J(Q)+\beta$. The argument relies on approximating $f$ through a finite-dimensional decomposition and ensuring that, for decreasing values of $\beta > 0$, the set of functions that violate the conclusion eventually becomes empty. The finite-dimensional reduction renders the problem computationally feasible, and a compactness argument ensures that the algorithm that needs to be implemented to find $\beta$ eventually stops.

The third step concerns turning the global stability result into a coercivity statement like the one from \cite[Lemma~4.8]{NakanishiSchlagBook}.

\subsection{Local stability near \texorpdfstring{$Q$}{Q}}

In order to understand the static energy $J$ restricted to the constraint $K_j = 0$ near the ground state $Q$, we look at the conditions that perturbations $Q+h$ need to satisfy to remain under the constraint $K_j(Q+h)=0$ (i.e. we look at the tangent space coercivity). To first order, this is $K_j'(Q) = 0$, but
\[ K_j'(Q) h = \angles{L_+ Y_j , h}, \]
where $L_+$ is as in \eqref{eqn:LinearNLKGAroundQ} and
\[ Y_0 = Q , \quad Y_2 = \frac{3}{2} Q + c \cdot \grad Q .\]
Define $\mcalB(f,g) = \angles{L_+ f, g}$. Then
\( Q_{\lambda}(x) = e^{3\lambda/2} Q(e^{\lambda}x) \)
we have (c.f. \cite[Section~2.2]{NakanishiSchlagBook})
\[ K_2(Q_{\lambda}) = e^{2\lambda} \norm{\grad Q}_{L^2}^2 - \frac{3}{4} e^{3\lambda} \norm{Q}_{L^4}^4 ,\]
and differentiating at $\lambda = 0$, together with $K_2(Q)=0$ gives $\nu_2$ in
\[ \nu_0 \coloneqq -\mcalB(Y_0, Y_0) = 2\norm{Q}_{L^4}^4 , \quad \nu_2 \coloneqq -\mcalB(Y_2, Y_2) = \norm{\grad Q}_{L^2}^2 .\]
Therefore $Y_0$ and $Y_2$ are both directions in which the desired coercivity is lost due to the presence of a nonzero component along the unstable eigenmode $\rho$.

The claim now is that whenever $h$ satisfies $\mcalB(Y_j,h) = 0$, then
\[ \mcalB(h,h) \geq c \norm{h}_{H^1}^2 \]
for an explicit, absolute constant $c > 0$.

Indeed, let $q_{\infty} > 0$ be such that $\norm{Q}_{L^{\infty}} \leq q_{\infty}$ and put $\Lambda = 1+3q_{\infty}^2$. Then for any $\gamma \in H^1$,
\begin{equation*}
    \norm{\gamma}_{H^1} = \mcalB(\gamma,\gamma) + 3 \int Q^2 \gamma^2 \leq \Lambda \mcalB(\gamma, \gamma) .
\end{equation*}
Decomposing
\[ Y_j = b_j \rho + z, \quad h = a\rho + \gamma , \quad z,\gamma \perp_{L^2} \rho ,\]
the orthogonality to $Y_j$ reads $k^2 a b_j = \mcalB(z,\gamma)$. Notice that $\mcalB$ is a positive quadratic form on $\rho^{\perp}$ given the gap property of $L_+$ (c.f. \cite[Section~3.1.2]{NakanishiSchlagBook}). Upon an application of Cauchy-Schwarz, $k^2 a b_j = \mcalB(z,\gamma)$ becomes
\[ k^2 a^2 \leq \frac{\mcalB(z,z)}{k^2 b_j^2} \mcalB(\gamma , \gamma) ,\]
which together with $k^2 b_j^2 - \mcalB(z,z) = \nu_j > 0$ yields
\[ \mcalB(h,h) \geq \frac{\nu_j}{k^2 b_j^2} \mcalB(\gamma, \gamma) .\]
Therefore
\[ \norm{h}_{H^1}^2 \leq 2a^2\norm{\rho}_{H^1}^2 + 2 \norm{\gamma}_{H^1}^2 \leq 2\frac{\norm{\rho}_{H^1}^2}{k^2} \mcalB(\gamma, \gamma) + 2 \Lambda \mcalB(\gamma, \gamma) \leq 2 \of{\frac{\norm{\rho}_{H^1}^2}{k^2} + \Lambda} \frac{k^2 b_j^2}{\nu_j} \mcalB(h,h) .\]

This proves the claim. We now transfer this tangent space coercivity to small enough perturbations of the constraint $K_j(Q+h)=0$. Let $S_4>0$ be such that
\[ \norm{h}_{L^4} \leq S_4 \norm{h}_{H^1} \]
and write $q_4 = \norm{Q}_{L^4}$. For $\norm{h}_{H^1} \leq 1$, expanding $K_0, K_2$ gives
\[ K_j(Q+h) = \mcalB(Y_j, h) + R_j(h) , \quad \abs{R_j(h)} \leq R \norm{h}_{H^1}^2 ,\]
where $R$ can be bounded by $R \leq 1+6q_{\infty}^2 + 4q_{4}S_4^3 + s_4^4$ in both cases $j=0,2$. If now $K_j(Q+h) =0$, define
\[ \theta = \frac{\mcalB(Y_j,h)}{\mcalB(Y_j,Y_j)} , \quad w = h-\theta Y_j .\]
Then $\mcalB(Y_j,w) = 0$ and $|\theta| \leq R \nu_{j}^{-1} \norm{h}_{H^1}^2$. Consequently, we have $\norm{w}_{H^1} \geq \norm{h}_{H^1}/2$ provided that
\[ \norm{h}_{H^1} \leq \frac{\nu_j}{2R \norm{Y_j}_{H^1}} ,\]
and
\begin{equation}\label{eqn:AI7}
    \mcalB(h,h) = \mcalB(w,w)-\nu_j \theta^2 \geq \frac{c_j}{4} \norm{h}_{H^1}^2 - \frac{R^2}{\nu_j} \norm{h}_{H^1}^2 .
\end{equation}
Finally,
\[ J(Q+h)-J(Q) = \frac{1}{2} \mcalB(h,h) - \int Q h^3 - \frac{1}{4} \int h^4, \]
and setting
\[ B_3 = q_4 S_4^4 , \quad B_{4,j} = \frac{1}{4} S_4^4 + \frac{R^2}{2 \nu_j} , \quad r_j = \min \inset{1, \frac{\nu_j}{2R \norm{Y_j}_{H^1}}, \frac{c_j}{32 B_3} , \sqrt{\frac{c_j}{32 B_{4,j}}}} ,\]
for
\[ c_j = 2 \of{\frac{\norm{\rho}_{H^1}^2}{k^2} + \Lambda} \frac{k^2 b_j^2}{\nu_j} ,\]
we conclude via \eqref{eqn:AI7} that
\[ J(Q+h) - J(Q) \geq \frac{c_j}{16} \norm{h}_{H^1}^2 \]
whenever $K_j(Q+h) = 0$ and $\norm{h}_{H^1} \leq r_j$.

\subsection{Finite-dimensional approximation}

Set now $r = \min(r_0,r_2)$. The goal is to produce $\beta > 0$ such that whenever $f \neq 0$ satisfies $K_j(f) = 0$ and $J(f) \leq J(Q) + \beta$, then $\dist_{H^1}(f,\pm Q) < r$. This will hinge on a finite-dimensional outer approximation of the set of such functions $f$ that satisfy the first two constraints but not the conclusion, followed by a test that one of these approximations is empty.

\textbf{Initial constraints.} To begin with, notice that we may establish a uniform lower energy bound in order to remove the zero solution from the search. Notice that $K_0(f)=0$ implies that
\[ \norm{f}_{H^1}^2 = 4J(f) \leq 4(J(Q)+1) =: M^2 ,\]
and that $K_2(f)$ yields
\[ \norm{\grad f}_{L^2}^2 = \frac{3}{4} \norm{f}_{L^4}^4 \leq \frac{3}{4} G \norm{f}_{L^2} \norm{\grad f}_{L^2}^3 \leq \frac{3}{4} G M \norm{\grad f}_{L^2}^3, \]
so
\[ \norm{\grad f}_{L^2} \geq \frac{4}{3GM} . \]
Moreover $K_0(f)=0$ yields
\[ \norm{f}_{L^2}^2 + \norm{\grad f}_{L^2}^2 = \norm{f}_{L^4}^4 \leq S_4^4 (\norm{f}_{L^2}^2 + \norm{\grad f}_{L^2}^2)^2 \]
by Sobolev, which gives $\norm{f}_{L^2}^2 + \norm{\grad f}_{L^2}^2 \geq S_4^{-4}$.

\textbf{Defining the approximation operator.} To construct the approximations, we need finite-dimensional subspaces $V_n \subset H_{\rad}^1$ and $H_{\rad}^1$-orthogonal projections $P_n \colon H_{\rad}^1 \longrightarrow V_n$ such that
\[ \norm{f-P_n f}_{L^4} \leq \sigma_n \norm{f}_{H^1} , \quad \sigma_n \to 0 .\]
To this end, Strauss's radial lemma yields
\[ f(r)^2 \leq \frac{1}{4 \pi r^2} \norm{f}_{H^1}^2, \]
which can then be used to ensure that for any $R>1$,
\[ \int_{|x|>R} |f(x)|^4 \diff x \leq \frac{\norm{f}_{H^1}^2}{4 \pi R^2} \int_{|x|>R} |f(x)|^2 \diff x \leq \frac{1}{4\pi R^2} \norm{f}_{H^1}^4 . \]
Letting now
\[
\chi_R(r) =
\begin{cases}
    1, &\quad 0 \leq r \leq R,\\
    2-r/R , &\quad R < r < 2R, \\
    0, &\quad r \geq 2R,
\end{cases}
\]
and setting $g = \chi_R f$, we conclude that $\norm{\grad g}_L^2 \leq \sqrt{2} \norm{f}_{H^1}$ and
\[ \norm{f-g}_{L^4} \leq \frac{1}{(4\pi)^{1/4}} R^{-1/2} \norm{f}_{H^1}. \]
We may therefore restrict our approximations to $B_{2R}$, assuming that $R$ is large enough. And we write $L = 2R$ from now on.

The finite-dimensional decomposition of $f$ is then given in terms of the radial Dirichlet eigenfunctions $e_j^L$ on $B_L$,
\[ e_j^L(x) = \frac{1}{\sqrt{2 \pi L}} \cdot \frac{\sin(j\pi |x|/L)}{|x|} , \quad j \in \inset{1,2,\ldots}, \]
and it extends to zero at every $x \in \R^3 \setminus B_L$. Define the coefficients
\[ \ell_j^R(f) \coloneqq \int_{B_L} \chi_R(x) f(x) e_j^{L}(x) \diff x ,\]
and for $N \geq 1$, the $L^2(B_L)$ spectral truncation of $g$,
\[ F_{R,N} f = \sum_{j=1}^N \ell_j^R(f) e_j^L .\]
The next step is to control the $L^4$ difference of $f$ and $F_{R,N}$ in terms of $R$ and $N$. To this end, write
\[
g=\chi_Rf=\sum_{j=1}^\infty a_j e_j^L,
\qquad
a_j=\ell_j^R(f) .
\]
The Parseval identities for the Dirichlet eigenfunction expansion give
\[
\|g-F_{R,N} f\|_2^2=\sum_{j>N}|a_j|^2, \quad \|\grad (g-F_{R,N}) f\|_2^2=\sum_{j>N} \lambda_j |a_j|^2 \leq \norm{\grad g}_{L^2}^2, \quad \lambda_j = (j \pi / L)^2 .
\]
In particular,
\[ \norm{g-F_{R,N}f}_{L^2} \leq \lambda_{N+1}^{-1/2} \norm{\grad g}_{L^2} . \]
Let $S_6$ be the constant in $\norm{v}_{L^6} \leq S_6 \norm{\grad v}_{L^2}$ for $v \in H^1(\R^3)$ (for instance, a non-optimal choice would be $S_6 = 4$), and interpolate between $L^2$ and $L^6$ to obtain
\[ \norm{g-F_{R,N}f}_{L^4} \leq S_6^{3/4} \lambda_{N+1}^{-1/8} \norm{\grad g}_{L^2} \leq \sqrt{2} S_6^{3/4} \of{\frac{L}{\pi(N+1)}}^{1/4} \norm{f}_{H^1} . \]
Combining this with the cutoff estimate and recalling that $L = 2R$ gives
\[ \norm{f-F_{R,N}f}_{L^4} \leq \sigma(R,N) \norm{f}_{H^1}, \]
where
\[ \sigma(R,N) = (4\pi)^{-1/4} R^{-1/2} + \sqrt{2} S_6^{3/4} \of{\frac{2R}{\pi(N+1)}}^{1/4} .\]

\textbf{Defining the projections.} For each $j$, set $h_j^R = \chi_R e_j^L$. These define the coefficients above,
\[ \ell_j^R(f) = \int f h_j^R ,\]
which are bounded linear functionals on $H^1$, since $\norm{h_j^R} \leq 1$. Therefore they can be represented by a unique function $v_j^R \in H^1$ satisfying
\[ \angles{f,v_j^R}_{H^1} = \ell_j^R(f) , \quad \forall f \in H^1 .\]
In particular we have their explicit description since
\[ v_j^R = (1-\Delta)^{-1} h_j^R .\]
Equivalently,
\[ v_j^R(x) = \int_{R^3} \frac{e^{-|x-y|}}{4 \pi |x-y|} h_j^R(y) \diff y .\]

We can finally define the subspaces $V_n$ as
\[ V_{R,N} = \linspan \inset{v_1^R, \ldots , v_N^R} ,\]
and the projection $P_{R,N}$ to be the $H^1$-orthogonal projection into $V_{R,N}$. We can transfer the $L^4$-estimate from $F_{R,N}$ to $P_{R,N}$ as follows. Let $w = f-P_{R,N}f$. Since $w$ is orthogonal to every $v_j^R$ in $H_{\rad}^1$, we have $\ell_j^R(w) = \angles{w,v_j^R}_{H^1} = 0$, and thus $F_{R,N} w = 0$. Applying the approximation estimate to $w$,
\[ \norm{f-P_{R,N}f}_{L^4} = \norm{w}_{L^4} = \norm{w-F_{R,N}w}_{L^4} \leq \sigma(R,N) \norm{w}_{H^1} \leq \sigma(R,N) \norm{f}_{H^1} .\]
Notice that the orthogonality provided by $P_{R,N}$ is necessary to estimate $\norm{w}_{H^1} \leq \norm{f}_{H^1}$ in the last step.

\textbf{Fixing the error.} Lastly, we can specify the sequence $V_n$ with a prescribed error $\sigma(R,N)$. Letting $\eps_n = 2^{-n}$, $C_{\tail} = (4\pi)^{-1/4}$ and $C_{\inter} = \sqrt{2}S_6^{3/4}$, we may choose
\[ R_n = \ceil{\max \inset{1, \of{\frac{2C_{\tail}}{\eps_n}}^2}} , \qquad N_{n} = \ceil{\max \inset{1, \frac{2R_n}{\pi} \of{\frac{2 C_{\inter}}{\eps_n}}^4}} , \]
so that $\sigma(R_n,N_n) \leq \eps_n / 2$. Therefore
\begin{equation}\label{eqn:AI_B}
   \norm{f-P_nf}_{L^4} \leq 2^{-n} \norm{f}_{H^1} , \quad \norm{P_n f}_{H^1} \leq \norm{f}_{H^1} . 
\end{equation}

\subsection{Testing process}

We now describe how the testing process on possible candidates for a counterexample to a fixed $\beta$ works. A crucial point of the procedure is that it requires certified approximations to $Q$ in $H_{\rad}^1$ with arbitrarily small prescribed error. Having a single approximation with a fixed error might preclude the test from returning a positive result.

In the following, $q_4$ (defined above) can be substituted by any upper bound for $\norm{Q}_{L^4}$. Set $p_n = P_n f$. By the construction of $p_n$, we have
\begin{equation}\label{eqn:AI_CD}
    \abs{\norm{f}_{L^4}^4 - \norm{p_n}_{L^4}^4} \leq e_n \coloneqq 4S_4^3 M^4 2^{-n} , \quad \abs{\int (f-p) Q^3} \leq z_n \coloneqq q_4^3 M 2^{-n} .
\end{equation}
We shall need these estimates later. In the testing process for $p_n$ we shall need energy tests for $\norm{\grad p_n}_{L^2}^2$ and $\norm{p_n}_{L^2}^2$ that we may design as follows. First,
\[ \norm{f}_{L^2}^2 \geq 2 \int f h_{\ell} - \norm{h_{\ell}}_{L^2}^2 .\]
On the other hand, integration by parts yields
\[ \norm{f}_{L^2}^2 \geq 2 \int f (-\triangle h_{\ell}) - \norm{\grad h_{\ell}}_{L^2}^2 .\]
Replacing $f$ by $p_n$ in the linear terms above introduces the errors from \eqref{eqn:AI_B}, which then justifies the constraints
\begin{equation}\label{eqn:AI_G}
    \begin{split}
        \norm{\grad f}_{L^2}^2 &\geq 2 \int p(-\triangle h_{\ell}) - \norm{\grad h_{\ell}}_{L^2}^2 -2^{1-n}MD_{\ell}, \\
        \norm{f}_{L^2}^2 &\geq 2 \int p_n h_{\ell} - \norm{h_{\ell}}_{L^2}^2 - 2^{1-n}M U_{\ell},
    \end{split}
\end{equation}
where the $h_{\ell}$ needs to satisfy $h_{\ell} \in H_{\rad}^1 \cap L^{4/3}$ and $\triangle h_{\ell} \in L^{4/3}$, with the (rational, and not necessarily sharp) upper bounds
\[ \norm{h_{\ell}}_{L^{4/3}} \leq U_{\ell}, \quad \norm{\triangle h_{\ell}}_{L^{4/3}} \leq D_{\ell} .\]
To ensure that the family of test functions $h_{\ell}$ is dense, we may construct them directly from $V_n$. In fact,
\[ \closure{\linspan \bigcup_n V_n} = H_{\rad}^1 ,\]
as if $h \in H_{\rad}^1$ were orthogonal to all the $V_n$ then $P_n h = 0$ and \eqref{eqn:AI_B} gives $\norm{h}_{L^4} = 0$. To ensure that this family is computable, one may enumerate all finite rational linear combinations of the functions $v_j^{R_n}$, $j =1, \ldots , N_n$. This is, we may enumerate
\[ \mcalR_{s} = \inset{\frac{a}{b} \colon |a| \leq s, \ 1 \leq b \leq s} \]
and check, for each $s \leq N_n$, the linear combinations with coefficients in $\mcalR_s$ for all $j=1, \ldots, N_n$ before moving on to $n+1$.

\subsection{The search procedure}

Consider a fixed $n$, and set $\beta_n = 2^{-n}$. For each $k=0,2$, we define $\mcalF_{k,n}$ by the constraints we have previously established. I.e. for $A,B \in \R$ and $p \in V_n$, impose that
\begin{equation}\label{eqn:AI_H}
    A, B \geq 0, \quad A+B \leq M^2 , \quad \norm{p}_{H^1}^2 \leq A+B
\end{equation}
together with
\begin{equation}\label{eqn:AI_G_test}
        A \geq 2 \int p(-\triangle h_{\ell}) - \norm{\grad h_{\ell}}_{L^2}^2 -2^{1-n}MD_{\ell}, \quad 
        B \geq 2 \int p_n h_{\ell} - \norm{h_{\ell}}_{L^2}^2 - 2^{1-n}M U_{\ell},
\end{equation}
as well as
\begin{equation}\label{eqn:AI_IJ}
    \begin{cases}
        A+B \geq S_4^{-4}, &k=0 \\
        A \geq \frac{16}{9 G^2 M^2}, &k=2
    \end{cases}
    \quad
    \frac{1}{2}(A+B) - \frac{1}{4} \norm{p}_{L^4}^4 \leq J(Q) + \beta_n + \frac{1}{4} e_n
\end{equation}
and
\begin{equation}\label{eqn:AI_KL}
    \begin{cases}
        \abs{A+B-\norm{p}_{L^4}^4} \leq e_n &k=0\\
        \abs{A-\frac{3}{4}\norm{p}_{L^4}^4} \leq \frac{3}{4} e_n, &k=2
    \end{cases}
    \quad
    A+B+\norm{Q}_{H^1}^2 -2 \abs{\int pQ^3} \geq r^2-2z_n .
\end{equation}
Suppose now that $f$ satisfies these with $A = \norm{\grad f}_{L^2}^2$, $B = \norm{f}_{L^2}^2$ and $p = P_n f$ against $\beta = 2^{-n}$. Then it constitutes a counterexample to our desired result, and we may move onto $n+1$. If, on the other hand, $\mcalF_{0,n} = \mcalF_{2,n} = \emptyset$, then $\beta = 2^{-n}$ concludes the search.

\subsection{Stopping point for the search}

Fix $k \in \inset{0,2}$. Suppose that $\mcalF_{k,n}$ were nonempty for infinitely many $n$ and choose feasible triples $(p_n,A_n,B_n)$ along a sequence of such $n$.
We remark that for any bounded sequence $u_k \in H_{\rad}^1$ and any fixed $N$, the sequence $P_N u_k$ lies in a bounded subset of $V_N$, and therefore has a convergent subsequence.
Therefore, up to a subsequence,
\[ p_n \to f \quad \text{in} \quad H_{\rad}^1 \cap L^4 \]
and $A_n \to A$, $B_n \to B$. For any fixed $\ell$, the aforementioned test inequalities apply and their errors tend to zero, hence
\[ A \geq 2 \int f (-\triangle h_{\ell}) - \norm{\grad h_{\ell}}_{L^2}^2, \quad B \geq 2 \int f h_{\ell} - \norm{h_{\ell}}_{L^2}^2 . \]
By density of $h_{\ell}$, we conclude that
\begin{equation}\label{eqn:AI_Q}
    A \geq \norm{\grad f}_{L^2}^2 , \quad B \geq \norm{f}_{L^2}^2 .
\end{equation}
Setting $C = \norm{f}_{L^4}^4$ yields
\[ \frac{1}{2} (A+B) - \frac{1}{4} C \leq J(Q), \]
and either $A+B=C$ (if $k=0$) or $A = 3C/4$ (if $k=2$). But by \eqref{eqn:AI_IJ}, $C>0$, so $f \neq 0$.

For $k=0$, choose $\theta$ such that
\[ \theta^2 = \frac{\norm{f}_{L^2}^2 + \norm{\grad f}_{L^2}^2}{\norm{f}_{L^4}^4} \leq 1 ,\]
so $K_0(\theta f) = 0$ and
\[ J(Q) \leq J(\theta f) = \frac{(\norm{f}_{L^2}^2 + \norm{\grad f}_{L^2}^2)^2}{4 \norm{f}_{L^4}^4} \leq \frac{\norm{f}_{L^4}^4}{4} \leq J(Q) ,\]
meaning that $f$ must be a minimizer $f=\pm Q$.
For $k=2$ a similar argument can be run defining $t=\norm{f}_{L^2}^2 / A \in (0,1]$ and computing $K_2(\sqrt{t}f)$ to conclude that $f$ must be $\pm Q$ again, contradicting $r>0$.
This compactness argument ensures that there must be $n_*$ such that $\beta = 2^{-n_*}$.

\subsection{Coercivity estimates}
We now transfer the global stability result into the variational coercivity estimates.
Let
\begin{equation}\label{eqn:AI_25}
    E(f,g) \leq J(Q) + \eps^2 , \quad \dist_{H^1 \times L^2}((f,g),(\pm Q,0)) \geq d .
\end{equation}
Defining
\[ D_0(f) = \norm{f}_{H^1}^2 ,
\quad D_2(f) = \norm{\grad f}_{L^2}^2 ,
\quad a_0 = \frac{1}{2S_4^4},
\quad a_2 = \frac{4}{9G^2 M_*^2} ,\]
where $\norm{f}_{L^2} \leq M_*$,
we first claim that
\[ D_j(f) \leq a_j \implies K_j(f) \geq \frac12 D_j(f) .\]

For $K_0$ this follows from an application of the Sobolev inequality. Taking fourth powers, we find
\[ \norm{f}_{L^4}^4 \leq S_4^4 \norm{f}_{H^1}^4 = S_4^4 D_0(f)^2 , \]
so
\[ K_0(f) \geq D_0(f) - S_4^4 D_0(f)^2 = D_0(f) (1-S_4^4 D_0(f)) \geq \frac12 D_0(f) .\]
For $K_2$, the Gagliardo-Nirenberg inequality yields
\[ \norm{f}_{L^4}^4 \leq G \norm{f}_{L^2} \norm{\grad f}_{L^2}^3 \leq G M_{*} D_2(f)^{3/2} ,\]
so
\[ K_2(f) \ge D_2(f) - \frac34 G M_* D_2(f)^{3/2} = D_2(f) \of{1-\frac34 GM_* \sqrt{D_2(f)}} \geq \frac12 D_2(f) .\]

Consider now the case $D_j(f) \geq a_j$ and $\abs{K_j(f)} \leq \kappa$, with $\kappa \leq a_j/2$. Define
\[ v = \alpha_j f , \quad \alpha_0^2 = \frac{\norm{f}_{H^1}^2}{\norm{f}_{L^4}^4} , \quad \alpha_2^2 = \frac{4\norm{\grad f}_{L^2}^2}{3\norm{f}_{L^4}^4} . \]
Then $K_j(v) = 0$ and
\begin{equation}\label{eqn:AI_27}
    |\alpha_j -1| \leq \frac{2\kappa}{a_j} , \quad \norm{v-f}_{H^1} \leq P_j \kappa , \quad P_j = \frac{2M_*}{a_j} , \quad \norm{v}_{H^1} \leq \sqrt{2}M_* .
\end{equation}
On the ball of radius $2M_*$, a Lipschitz constant for $J$ is
\[ L_J = 2M_* + S_4^4 (2M_*)^3 ,\]
so with $B_j = L_J P_j$,
\[ |J(v)-J(f)| \leq B_j \kappa . \]
If $\eps^2 + B_j \kappa \leq \beta$, then the stability from the first part gives
\begin{equation}\label{eqn:AI_29}
    \dist_{H^1} (v,\pm Q)^2 \leq C (\eps^2 + B_j \kappa) .
\end{equation}
Furthermore, $J(v) \geq J(Q)$ implies
\[ \norm{g}_{L^2}^2 \leq 2 (\eps^2 + B_j \kappa), \]
which together with \eqref{eqn:AI_27} yields
\[ \dist_{H^1 \times L^2}((f,g) , (\pm Q,0))^2 \leq D_j^{*} (\eps^2 + \kappa) ,\]
where $D_j^* = 2P_j^2 + 2(C+1)(1+B_j)$ suffices when $\kappa \leq 1$. Set now
\[ a = \min(a_0,a_2), \quad B_* = \max(B_0,B_2) , \quad D_* = \max(D_0^*, D_2^*) ,\]
and choose
\[ \eps_{\var}(d)^2 = \min \inset{1, \frac{\beta}{4}, \frac{d^2}{4D_*}} , \quad \kappa_{\var}(d) = \min \inset{1, \frac{a}{2} , \frac{\beta}{4D_*}, \frac{d^2}{4D_*}} .\]
Then \eqref{eqn:AI_29} contradicts \eqref{eqn:AI_25} whenever $D_j(f) \geq a_j$ and $|K_j(f)| \leq \kappa_{\var}(d)$. Therefore we conclude that
\[ K_0(f) \geq \min \inset{\kappa_{\var}(d) , \frac12 \norm{f}_{H^1}^2} , \quad K_2(f) \geq \min \inset{\kappa_{\var}(d) , \frac12 \norm{\grad f}_{L^2}^2} .\]

% \nocite{*}                                 % Include all the resources
\printbibliography                          % References and toc inclusion
\end{document}

%% file: files/preamble.tex
\usepackage[english]{babel}             % English
\usepackage{geometry}
\usepackage{amsfonts,amsmath,amssymb,amsthm}    % AMS maths packages
\usepackage{mathtools}
\usepackage{bbm}
\usepackage{esint}
\usepackage{lmodern}                    % For font management
\usepackage{graphicx}                   % For images and figures
\usepackage{xcolor}                     % For using colors
\usepackage[backend=biber,style=alphabetic,bibencoding=utf8,isbn=false,url=false,sorting=nty,doi=false,giveninits=true,maxbibnames=10,maxalphanames=10]{biblatex}  % For the bibliography
\AtEveryBibitem{\clearfield{month}}
\usepackage{csquotes}                   % So that overleaf doesn't show warnings
\usepackage{enumitem}                   % Enumerations with roman indices
\usepackage{parskip}
\usepackage[margin=1cm,labelfont=bf]{caption}   % For custom caption format 
\usepackage{hyperref}                   % For the links
\usepackage{float}

\savegeometry{paper}                    % Saves the geometry as 'paper'

\graphicspath{{./files/figures/}}       % Path for figures

\AtBeginBibliography{\small}            % Small font

\allowdisplaybreaks                     % Breaks in math mode

\setlist[enumerate,1]{label=(\roman*)}  % Small roman in enumerations

\numberwithin{equation}{section}

\DeclareFieldFormat{extraalpha}{#1}     % Format for the citations in the text
\DeclareLabelalphaTemplate{
  \labelelement{
    \field[final]{shorthand}
    \field{label}
    \field[strwidth=2 , strside=left,ifnames=1]{labelname}
    \field[strside=left,ifnames=2-,varwidthlist=true]{labelname}
  }
}
\DeclareLabelalphaNameTemplate{
  \namepart[use=true, pre=true, strwidth=1, compound]{prefix}
  \namepart{family} % Compounds are a pain in the ass
}

\renewbibmacro{in:}{}                   % Suppresses the 'In:' before journal
\DeclareFieldFormat[article,periodical]{volume}{\mkbibbold{#1}}
\DeclareFieldFormat[article,periodical]{journaltitle}{#1\isdot}
\DeclareFieldFormat[article,periodical]{title}{#1}

%% file: files/macros.tex
\newcommand{\mcalB}{\mathcal{B}}

\newcommand{\mcalF}{\mathcal{F}}

\newcommand{\mcalR}{\mathcal{R}}

\newcommand{\naturals}{\mathbb{N}}
\newcommand{\N}{\naturals}
\newcommand{\integers}{\mathbb{Z}}
\newcommand{\Z}{\integers}

\newcommand{\reals}{\mathbb{R}}
\newcommand{\R}{\reals}

\newcommand{\field}{\mathbb{F}}

\newcommand{\eps}{\varepsilon}
\newcommand{\fourier}[1]{\widehat{#1}}

\newcommand{\norm}[1]{\left\|  #1  \right\|}
\newcommand{\abs}[1]{\left| #1 \right|}

\newcommand{\of}[1]{\left( #1 \right)}

\newcommand{\inset}[1]{\left \{ #1 \right \}}
\newcommand{\angles}[1]{\left\langle #1 \right\rangle}

\newcommand{\closure}[1]{\overline{#1}}

\newcommand{\grad}{\nabla}

\newcommand{\charf}[1]{\mathbbm{1}_{#1}}

\DeclareMathOperator{\supp}{supp}

\DeclareMathOperator{\diff}{d\!}

\DeclareMathOperator{\divergence}{div}

%% file: files/environments.tex
\newcommand{\theoremname}{Theorem}
\newcommand{\lemmaname}{Lemma}
\newcommand{\propositionname}{Proposition}
\newcommand{\definitionname}{Definition}
\newcommand{\corollaryname}{Corollary}
\newcommand{\claimname}{Claim}

\newcommand{\examplename}{Example}
\newcommand{\remarkname}{Remark}
\newcommand{\commentsname}{Comments}
\newcommand{\notename}{Note}
\newcommand{\notationname}{Notation}

\newcounter{allcounter}[section]

\newtheorem{numthm}[allcounter]{\theoremname}
\newtheorem{theorem}{\theoremname}

\newtheorem{numlemma}[allcounter]{\lemmaname}

\newtheorem{numprop}[allcounter]{\propositionname}

\newtheorem{numcor}[allcounter]{\corollaryname}

\newtheorem*{remark}{\remarkname}                   % Remark
\newtheorem{numremark}[allcounter]{\remarkname}